\def\N {\mathbb{N}}
\def\H {\mathcal{H}}
\def\M {\mathcal{M}}
\def\P {\mathcal{P}}
\def\RR {\mathcal{R}}
\def\C {\mathbb{C}}
\def\R {\mathbb{R}}
\def\FF{\widehat{F}}
\def\E {\mathcal{E}}
\def\S{\mathbb{S}}
\newtheorem{theorem}{Theorem}
\newtheorem{lemma}[theorem]{Lemma}
\newtheorem{corollary}[theorem]{Corollary}
\newtheorem{prop}[theorem]{Proposition}
\theoremstyle{definition}
\newtheorem{definition}[theorem]{Definition}
\newtheorem{remark}[theorem]{Remark}
\numberwithin{equation}{section}
\numberwithin{theorem}{section}
\newenvironment{OMabstract}{\noindent\textbf{Abstract.} }{\medskip}
\newenvironment{OMsubjclass}{\noindent\textbf{Mathematics Subject Classification (2020):} }{\medskip}
\newenvironment{OMkeywords}{\noindent\textbf{Keywords:}  }{\medskip}
\begin{document}

\author{Mario Alberto Ruiz Caballero and Rafael del R\'io} %Please write full names and surnames of all co-authors here.
\title{On Spectral Stability for Rank One Singular Perturbations}
\maketitle

%\hrule\smallskip\begin{center}Submission for publication in \textsc{Opuscula Mathematica}\end{center}\smallskip\hrule\bigskip

\begin{OMabstract}
    %------------------------- Please type your abstract here ------------------
Embedded point spectra of rank one singular perturbations of an arbitrary self-adjoint operator $A$ on a Hilbert space $\H$ is studied. It is shown that these perturbations can be regarded as self-adjoint extensions of a densely defined closed symmetric operator $B$ with deficiency indices $(1,1)$. Assuming the deficiency vector of $B$ is cyclic for its self-adjoint extensions, we prove that the spectrum of $A$ contains a dense $\textit{G}_{\delta}$ subset where it is not possible to have eigenvalues for any rank one singular perturbation. Moreover, for a dense $\textit{G}_{\delta}$ set of rank one singular perturbations of $A$ their eigenvalues are isolated. The approach presented here unifies points of view
taken by different authors. 
  % By looking ,

  %  We regard rank one singular perturbations of a self-adjoint operator $A$ on a Hilbert space $\H$ as the self-adjoint extensions of a densely defined closed symmetric operator $B$ on $\H$ with deficiency indices $(1,1)$.
   %Consider  restriction of $A$ to the subspace in domain of $A$ where a linear functional is null in general 
   
%Indeed these results are proven for an arbitrary operator $B$.
%if they happen to have eigenvalues these are isolated points of the spectrum
%there are its point spectrum is disjoint from the spectrum for a fixed self-adjoint extension.
%there will be no eigenvalues in the spectrum of any fixed self-adjoint extension
%it is also proven that the set of extension parameters of those self-adjoint extensions having no eigenvalues in the spectrum for a fixed extension turn out to be dense $\textit{G}_{\delta}$. %subset respect to the extension parameter set.
%.
%This approach allows to obtain the mentioned results for rank one singular perturbations $A_{\alpha}$ of a self-adjoint operator $A$ on $\H$.
    %---------------------------------------------------------------------------
\end{OMabstract}

\begin{OMkeywords}
    %------------------------- Please type your keywords here ------------------
    self-adjoint extension, rank one singular perturbation, embedded point spectra, singular continuous spectrum.
    %---------------------------------------------------------------------------
\end{OMkeywords}

\begin{OMsubjclass}
    %------------------------- Please type your 2020 MSC numbers here ----------
    47B02, 47B25, 47A55, 47A10.
    %---------------------------------------------------------------------------
\end{OMsubjclass}

%------------------- Type contents of your article below --------------------

\section{Introduction}  %Please insert your own sections's name here.

A fundamental problem in spectral theory is to understand the behavior of spectra of self-adjoint operators when these operators are perturbed. One of the most natural perturbations are rank one regular perturbations, that is, perturbations of the form 
\begin{equation*}
A_{\alpha}=A+\alpha\langle\varphi,\cdot\rangle\varphi
\end{equation*}

where $\varphi$ is a cyclic vector for $A$ and the symbol $\langle\cdot,\cdot\rangle$ denotes inner product in $\H$. In particular it is known, that if $I$ is an interval contained in the spectrum of $A$, $\sigma(A)$, then it is possible for $A_{\alpha}$ to have dense point spectrum 
$\sigma_{p}(A_{\alpha})$ in $I$ for $a. e.$ $\alpha\in\R$ in Lebesgue sense, see \cite{SW}.
However, this cannot happen for every $\alpha\in\R$. It was shown in \cite{Gordon} and \cite{RMS} that there is a dense $\textit{G}_{\delta}$ set
$B\subset\R$ such that if $\alpha\in B$, then $\sigma_{p}(A_{\alpha})\cap I$ is empty and moreover, if $\alpha\in \R\setminus B$
then there is a dense $\textit{G}_{\delta}$ set $F\subset I$ such that $\sigma_{p}(A_{\alpha})\cap F$ is empty.
Nevertheless in some situations pure point spectrum is generic as shown in \cite{ArDa}.
In \cite{Rio} related problems were studied for Sturm-Liouville operators with local perturbations.\\

%In recent results \cite{ArDa} the generic nature of a family of discrete Schr\"odinger operators with pure point spectrum is proven with respect to other metric.

A natural question is whether similar results hold for rank one singular perturbations given by the formal expression
\begin{equation*}
A_{\alpha}=A+\alpha\langle\varphi,\cdot\rangle\varphi
\end{equation*}
where $\varphi\in\H_{-2}\setminus\H$. The symbol $\langle\cdot,\cdot\rangle$ denotes the duality pairing between $\H_{-2}$ and $\H_{2}$ or simply the action of linear functionals. Rank one singular perturbations are operators on the underlying Hilbert space whose domains are different from the domain of the unperturbed operator and the difference of their resolvents is a rank one bounded operator. In \cite{BSpaper} this question was considered for the case when $\varphi\in\H_{-1}\setminus\H$, i.e. for
so-called form bounded singular perturbations and $A$ being semi-bounded.\\

The case addressed in this paper includes the more general situation when form unbounded singular perturbations, i.e. $\varphi\in\H_{-2}\setminus\H_{-1}$ are considered. According to \cite{ALBKUR}, the difference between the two cases lies in the fact that if $\varphi\in\H_{-1}\setminus\H$, the formal expression $A_{\alpha}$ determines only one operator. On the other hand, if
$\varphi\in\H_{-2}\setminus\H_{-1}$, $A_{\alpha}$ turns out to be a biparametric family of operators $A_{\alpha,c}$ where each operator is determined by an extension $\varphi_{c}$ of $\varphi$. This theoretical framework will be explained later. Moreover, in the case $\varphi\in\H_{-1}\setminus\H$, the form-sum method is used while when $\varphi\in\H_{-2}\setminus\H_{-1}$ this method fails. An approach to describe rank one singular perturbations $A_{\alpha}$ is by means of the self-adjoint extensions of restriction $A$ to $Ker\varphi$, the subspace of $D(A)$ where $\varphi$ is equal to zero, which turns out to be a densely defined closed symmetric operator with deficiency indices $(1,1)$, see \cite{ALBKUR}. The main results in the current article are the following:

\begin{theorem}[Forbidden Energies]\label{maintheorem1}
Let $A$ be a self-adjoint operator on a Hilbert space $\H$. Let
$A_{\alpha,c}=A+\alpha\langle\varphi_{c},\cdot\rangle\varphi$ with $\varphi\in\H_{-2}\setminus\H$ and $c\in\R$ fixed. Let $\M$ be the cyclicity space of $(A-iI)^{-1}\varphi$ defined in the generalized sense. Then
\begin{equation*}
\Xi_{c}:=\left\lbrace x\in\sigma(A\upharpoonright_{\M}):x\not\in\sigma_{p}(A_{\alpha,c}\upharpoonright_{\M}),\;for\;any\;\alpha\in\R\setminus\left\lbrace 0\right\rbrace \right\rbrace 
\end{equation*}
contains a dense $\textit{G}_{\delta}$ subset in $\sigma(A\upharpoonright_{\M})$
\end{theorem}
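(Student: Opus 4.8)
The plan is to transfer the statement into the spectral representation of $A$ on the cyclic subspace $\M$ and then to isolate a purely measure-theoretic genericity assertion. First I would set $\psi:=(A-iI)^{-1}\varphi$; since $\varphi\in\H_{-2}$ the resolvent carries it back into $\H$, so $\psi\in\H$ and $\M$ is precisely the cyclic subspace generated by $\psi$ under $A$. On $\M$ the operator $A\upharpoonright_{\M}$ is unitarily equivalent to multiplication by the variable $t$ on $L^2(\R,\mu)$, where $\mu=\mu_{\psi}$ is the spectral measure of $\psi$; in particular $\sigma(A\upharpoonright_{\M})=\operatorname{supp}\mu$, a closed subset of $\R$ and hence a complete metric space. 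This is the arena in which I want to exhibit the dense $G_{\delta}$.

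Next I would reduce ``being an eigenvalue'' to a single scalar condition. By the self-adjoint extension picture recalled above (cf.\ \cite{ALBKUR}), every $A_{\alpha,c}$ is a restriction of $B^{*}$, so an eigenvector at a real point $x$ must lie in $\ker(B^{*}-x)$; in this deficiency-$(1,1)$ setting such a vector, if it exists in $\H$, is necessarily a multiple of the formal vector $(A-x)^{-1}\varphi$. Using the spectral model together with $\varphi=(A-iI)\psi$ one computes
\begin{equation*}
\big\|(A-x)^{-1}\varphi\big\|^{2}=\int_{\R}\frac{t^{2}+1}{(t-x)^{2}}\,d\mu(t),
\end{equation*}
which is finite if and only if $\Phi(x):=\int_{\R}(t-x)^{-2}\,d\mu(t)<\infty$, the weight $t^{2}+1$ being harmless away from the singularity $t=x$. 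Hence if $\Phi(x)=\infty$ there is no eigenvector at $x$ for \emph{any} $\alpha\neq0$ and the fixed $c$, so $S:=\{x\in\operatorname{supp}\mu:\Phi(x)=\infty\}\subseteq\Xi_{c}$, and it suffices to prove that $S$ is a dense $G_{\delta}$ in $\operatorname{supp}\mu$.

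For the $G_{\delta}$ part I would write $\Phi=\sup_{n}\Phi_{n}$ with $\Phi_{n}(x)=\int\bigl((t-x)^{2}+\tfrac1n\bigr)^{-1}d\mu(t)$; each $\Phi_{n}$ is continuous by dominated convergence and $\Phi_{n}\uparrow\Phi$, so $\Phi$ is lower semicontinuous and $S=\bigcap_{N}\{\Phi>N\}$ is $G_{\delta}$. For density I would show that each closed set $F_{N}:=\{x\in\operatorname{supp}\mu:\Phi(x)\le N\}$ is nowhere dense, whence $\{\Phi<\infty\}=\bigcup_{N}F_{N}$ is meagre, $S$ is residual, and a residual $G_{\delta}$ in the complete space $\operatorname{supp}\mu$ is dense by Baire. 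Suppose $F_{N}$ contained a relatively open piece $J=I\cap\operatorname{supp}\mu$ with $I$ a bounded interval and $\mu(J)>0$. Restricting the inner integral to $J$ and integrating the bound $\Phi\le N$ gives
\begin{equation*}
N\,\mu(J)\ge\int_{J}\Phi\,d\mu\ge\int_{J}\!\int_{J}\frac{d\mu(t)\,d\mu(x)}{(t-x)^{2}},
\end{equation*}
so everything reduces to showing that the Riesz $2$-energy of the nonzero finite measure $\mu\!\restriction_{J}$ is infinite, contradicting finiteness of the left-hand side.

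The hard part, and the only genuinely nontrivial estimate, is exactly this divergence of the $2$-energy. I expect to obtain it from potential theory: a set carrying a measure of finite $s$-energy has Hausdorff dimension at least $s$ (Frostman), and since $\operatorname{supp}(\mu\!\restriction_{J})\subseteq\R$ has dimension at most $1<2$, the $2$-energy of $\mu\!\restriction_{J}$ must be infinite. Should a self-contained argument be preferred, I would instead use the layer-cake identity $\int_{J}(t-x)^{-2}\,d\mu(t)=2\int_{0}^{\infty}r^{-3}\mu\bigl(J\cap(x-r,x+r)\bigr)\,dr$ and exploit, for $\mu$-a.e.\ $x\in J$, that the upper local dimension of $\mu$ is at most $1$, which forces the small-$r$ integral to diverge; atoms are handled trivially since they make $\Phi$ infinite at once. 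Once this divergence is in hand the contradiction closes the nowhere-density step, and assembling the three ingredients — reduction to $\M$, the eigenvector criterion $\Phi(x)<\infty$, and the Baire-category genericity of $\{\Phi=\infty\}$ — yields the claimed dense $G_{\delta}$ inside $\Xi_{c}$.
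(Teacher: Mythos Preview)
Your argument is correct and shares the paper's skeleton: both isolate the dense $G_\delta$ inside $\Xi_c$ as $S=\{y\in\operatorname{supp}\mu:\int(t-y)^{-2}\,d\mu(t)=\infty\}$, via the Aronszajn--Donoghue fact that an eigenvalue of $A_{\alpha,c}\upharpoonright_{\M}$ (for $\alpha\neq0$) at $y$ forces that integral to converge, and then show $S$ is dense $G_\delta$ in $\operatorname{supp}\mu=\sigma(A\upharpoonright_{\M})$. The paper first passes through the extension parameterization $A_{\alpha,c}=T_\theta$ (Theorem~\ref{ultimate}), records the full two-sided eigenvalue characterization (Theorem~\ref{teo1.1}, Corollary~\ref{corolario00}), and then cites \cite{RMS} as a black box for the density of $S$ (Proposition~\ref{teo2.1}); you instead stay in the spectral model of $A\upharpoonright_{\M}$, use only the necessary half of the eigenvalue criterion, and replace the citation of \cite{RMS} by a self-contained density argument via the divergence of the Riesz $2$-energy of any nonzero finite measure on $\R$ (Frostman). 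Your route is a bit more economical for this theorem alone; the paper's detour through $T_\theta$ and the full Aronszajn--Donoghue description pays off later, since that description is reused in Section~5 (e.g.\ in Proposition~\ref{prop2*}).
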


\begin{theorem}[Forbidden Couplings]\label{maintheorem2}
Under the above assumptions
\begin{equation*}
\Gamma_{c}:=\left\lbrace \alpha\in\R\mid\sigma_{p}(A_{\alpha,c}\upharpoonright_{\M})\cap\sigma(A\upharpoonright_{\M})=\varnothing\right\rbrace
\end{equation*}
contains a dense $\textit{G}_{\delta}$ subset in $\R$.
\end{theorem}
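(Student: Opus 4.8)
The plan is to prove that the complement $\R\setminus\Gamma_c$ is meager (of the first category in $\R$); Baire's theorem then gives at once a dense $\textit{G}_{\delta}$ contained in $\Gamma_c$. The starting point is the eigenvalue characterization furnished by the Krein-type resolvent formula for $A_{\alpha,c}$. On $\M$ the operator $A\upharpoonright_{\M}$ has simple spectrum with a cyclic vector, so it carries a scalar spectral measure $\mu$ (the measure of $(A-iI)^{-1}\varphi$). Writing $F_c$ for the associated Nevanlinna function and $G(x):=\int (t-x)^{-2}\,d\mu(t)$ for the normalizability integral, a point $x\in\sigma(A\upharpoonright_{\M})$ is an eigenvalue of $A_{\alpha,c}\upharpoonright_{\M}$ exactly when $G(x)<\infty$ (the formal eigenvector $(A-x)^{-1}\varphi$ lies in $\H$) and simultaneously $F_c(x)=-1/\alpha$, the real boundary value existing and being finite precisely where $G(x)<\infty$. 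Hence, setting $S:=\{x\in\sigma(A\upharpoonright_{\M}):G(x)<\infty\}$, a coupling $\alpha\neq 0$ produces an embedded eigenvalue if and only if $-1/\alpha\in F_c(S)$. Since $\alpha\mapsto -1/\alpha$ is a homeomorphism of $\R\setminus\{0\}$, it suffices to show that $F_c(S)$ is meager in $\R$.

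I would next record the two structural facts that drive the argument: where $G$ is finite, $F_c$ has a finite continuous real boundary value and $F_c'(x)=G(x)>0$ on the support of $\mu$, so $F_c$ is strictly increasing there. I then stratify $S=\bigcup_n E_n$ with $E_n:=\{x\in\sigma(A\upharpoonright_{\M}):G(x)\le n\}$. Each $E_n$ is closed, because $\{G>n\}=\bigcup_{\varepsilon>0}\{x:\int((t-x)^2+\varepsilon^2)^{-1}\,d\mu(t)>n\}$ is a union of open sets; and $E_n$ is nowhere dense, since the dense $\textit{G}_{\delta}$ delivered by Theorem \ref{maintheorem1} lies inside $\{G=\infty\}\subseteq\{G>n\}$, so $\{G=\infty\}$ is dense in $\sigma(A\upharpoonright_{\M})$ and $E_n$ has empty interior. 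Intersecting with a fixed compact interval makes $E_n$ compact, and $F_c\upharpoonright_{E_n}$ is then a continuous strictly monotone injection of a compact set, hence a homeomorphism onto its (compact) image.

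The heart of the matter is to show each $F_c(E_n)$ is nowhere dense, for then $F_c(S)=\bigcup_n F_c(E_n)$ is meager and, after covering $\sigma(A\upharpoonright_{\M})$ by countably many compact intervals, we are done. Suppose for contradiction $F_c(E_n)\supseteq(c,d)$ for some $c<d$. As $F_c\upharpoonright_{E_n}$ is a homeomorphism onto its image, the inverse $(F_c\upharpoonright_{E_n})^{-1}$ is continuous and strictly increasing on $(c,d)$, so its image is a connected subset of $E_n$ with more than one point, i.e.\ a nondegenerate interval contained in $E_n$ — contradicting that $E_n$ has empty interior. This monotone-image step is where I expect the real difficulty to lie: $F_c$ is genuinely badly behaved off $S$ (it develops poles at the point masses of $\mu$ and acquires non-real boundary values on the absolutely continuous part), so the compactness-and-connectedness device must be confined to the finite continuous pieces $E_n$, and the passage from ``$E_n$ nowhere dense'' to ``$F_c(E_n)$ nowhere dense'' is exactly the obstacle that forces the use of strict monotonicity rather than mere continuity (a merely continuous surjection could carry a nowhere dense set onto an interval). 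Assembling the pieces, $\R\setminus\Gamma_c=\bigcup_n\{-1/v:v\in F_c(E_n)\setminus\{0\}\}$ is a countable union of images of nowhere dense sets under the homeomorphism $v\mapsto -1/v$, hence meager, and Baire's theorem yields the dense $\textit{G}_{\delta}$ inside $\Gamma_c$.
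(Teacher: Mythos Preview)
Your plan is sound in outline, and you correctly locate the crux: passing from ``$E_n$ is nowhere dense'' to ``$F_c(E_n)$ is nowhere dense.'' But the device you propose for this step --- strict monotonicity of $F_c$ on $E_n$ --- does not hold. Having $F_c'(x)=G(x)>0$ at each point of a set does not make $F_c$ increasing on that set unless the set is an interval, and $E_n$ is typically totally disconnected. Concretely, if $a<b$ both lie in $E_n$ but $\mu$ puts mass in $(a,b)$ (say a point mass, or enough mass that $G=\infty$ somewhere in between), then the kernel $t\mapsto 1/(t-x)$ produces $F_c(a)>F_c(b)$: the boundary function increases on each interval free of such singular behaviour but drops from $+\infty$ to $-\infty$ across each one. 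Thus $F_c\!\upharpoonright_{E_n}$ is in general neither monotone nor injective (two distinct points of $S$ may well be eigenvalues of the \emph{same} $A_{\alpha,c}$, i.e.\ $F_c(x_1)=F_c(x_2)=-1/\alpha$). The Lipschitz bound $|F_c(x)-F_c(y)|\le n|x-y|$ on $E_n$, which does follow from $G\le n$, is not enough either: a $1$-Lipschitz map can send a fat Cantor set onto an interval.

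The paper establishes exactly this nowhere-dense transfer, but by a different mechanism. Instead of working with $F_c$ on the line, it follows Gordon and encodes eigenvectors as points of the compact set $Q_M\subset[0,\pi]\times\R\times(\M,\text{weak})$, then proves (Proposition~\ref{prop2*}) that for any compact $F\subseteq Q_M$ the energy projection $\P(F)$ is nowhere dense iff the parameter projection $\Pi(F)$ is. The argument uses Zorn's lemma to pass to a minimal compact whose $\theta$-projection still covers a given interval, and then the identity $-4c_1\overline{c_2}\sin(\theta_1-\theta_2)=(E_1-E_2)\langle y_1,y_2\rangle$ (Lemma~\ref{lem3}) together with a Baire continuity point of $(y_1,y_2)\mapsto\langle y_1,y_2\rangle$ (Lemma~\ref{funcionW}) forces the two projections to be locally homeomorphic near that point, contradicting minimality. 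This delivers the nowhere-dense transfer without any global monotonicity; the resulting statement for $\theta$ (Theorem~\ref{TEO3}) is then carried to $\alpha$ by the explicit homeomorphism $\Psi_c(\alpha)=\tfrac12\arg\bigl[-\tfrac{1+\alpha(c-i)}{1+\alpha(c+i)}\bigr]$. If you want to salvage your direct route, the missing ingredient is precisely a substitute for monotonicity that still prevents $F_c(E_n)$ from containing an interval; Lemma~\ref{lem3} and Proposition~\ref{prop2*} are one way to supply it.
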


The present paper essentially unifies the methods of \cite{RMS} and \cite{Gordon} in the framework of self-adjoint extensions. Following the approach of \cite{RMS} we get a theorem concerning forbidden energies for self-adjoint extensions. On the other hand the ideas of \cite{Gordon} lead to a theorem on forbidden extension parameters and
allow to show that Theorem \ref{maintheorem1} is equivalent to Theorem \ref{maintheorem2}.\\

The paper is divided as follows. In Section 2 both the von Neumann's Extensions Theory and the theoretical framework given in \cite{ALBKUR} for rank one singular perturbations are provided. In Section 3 some results of \cite{RMS} originally proved for Borel-Stieltjes transforms are extended for Nevanlinna-Herglotz functions. In Section 4 in
order to illustrate what happens when the spectrum of self-adjoint extensions is not simple, a version of
the well-known theorem from Aronszajn-Donoghue Theory on characterization of eigenvalues by improper integrals when
self-adjoint extensions are reduced to a cyclicity space is shown. Then with this result Theorem \ref{maintheorem1} is
proven. In Section 5 a theorem on forbidden extension parameters, that is, for self-adjoint extensions of a densely defined closed symmetric operator with deficiency indices $(1,1)$ is obtained and Theorem \ref{maintheorem2} is deduced by transforming rank one singular perturbations into self-adjoint extensions of a certain symmetric operator through a homeomorphism. As a consequence of these results, for a dense $\textit{G}_{\delta}$ set either of rank one singular perturbations of $A$ or self-adjoint extensions of a symmetric operator, their eigenvalues are isolated.
Moreover, if we assume empty absolutely continuous spectrum, there is pure singular continuous spectrum for this set
of operators.

\section{Preliminaries}

\subsection{Self-adjoint extensions}

We recall von Neumann's Extension Theorem of Symmetric Operators. For this, the following definition is given.

\begin{definition}[Definition 2.2, \cite{Sch} and Equation 7.1.44, \cite{BS}]
Let $B$ be a densely defined closed symmetric operator on  $\H$. We shall call deficiency spaces of $B$ to the sets
%\footnote{Given an operator $B$ densely defined on a Hilbert space, we denote for $B^{*}$ its adjoint operator that acts on the same Hilbert space.}

\begin{equation*}
K_{\pm}(B) :=Ran(B\pm iI)^{\perp}=Ker(B^{*}\mp iI)
\end{equation*}

where $\perp$ and $B^{*}$ denote orthogonal complement in $\H$ and adjoint operator. Also, we shall call deficiency indices of $B$ to the pair $\left( d_{+}(B), d_{-}(B)\right) $ where
\begin{equation*}
d_{\pm}(B):=dim\;K_{\pm}(B).
\end{equation*}
\end{definition}

Let $\mathcal{B}(B)$ be the set of closed symmetric extensions of $B$ and $\mathcal{V}(B)$ the set of partial isometries
from $K_{+}(B)$ to $K_{-}(B)$. We state the next theorem.

 \begin{theorem}[Theorem 13.9, \cite{Sch} and Theorem 7.4.1, \cite{BS}]\label{teovonneum}
Let $B$ be a densely defined closed symmetric operator on $\H$. There exists a bijective
mapping from $\mathcal{V}(B)$ to $\mathcal{B}(B)$ given by
\begin{equation*}
V\mapsto T_{V}:=B^{*}\upharpoonright_{D(T_{V})}
\end{equation*}
%\begin{equation}\label{100}
%U_{B}=U_{H}-V\;with\;B\in\mathcal{B}(H)\;and\;V\in\mathcal{V}(H).
%\end{equation}
%where $U_{B}$ is the Cayley transform of $B$. In addition, each $B\in\mathcal{B}(H)$ is of the form
where
	\begin{equation*}
	D(T_{V})=D(B)\dotplus (I+V)D(V).
	\end{equation*}
Furthermore, $T_{V}$ is self-adjoint if and only if $V$ is unitary from $K_{+}(B)$ to $K_{-}(B)$.
\end{theorem}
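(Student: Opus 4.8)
The plan is to reduce the whole statement to the first von Neumann formula together with a computation of the boundary form of $B^{*}$. Throughout write $K_{+}=Ker(B^{*}-iI)$ and $K_{-}=Ker(B^{*}+iI)$, so that $B^{*}f_{+}=if_{+}$ for $f_{+}\in K_{+}$ and $B^{*}f_{-}=-if_{-}$ for $f_{-}\in K_{-}$. I would first equip $D(B^{*})$ with the graph inner product $\langle f,g\rangle_{*}=\langle f,g\rangle+\langle B^{*}f,B^{*}g\rangle$; since $B$ is closed, $B^{*}$ is closed and $(D(B^{*}),\langle\cdot,\cdot\rangle_{*})$ is a Hilbert space in which $D(B)$ is a closed subspace. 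The key preliminary step is the orthogonal decomposition
\[
D(B^{*})=D(B)\dotplus K_{+}\dotplus K_{-},
\]
with all three summands mutually orthogonal for $\langle\cdot,\cdot\rangle_{*}$. To obtain it I would show that $h\perp_{*}D(B)$ forces $(B^{*})^{2}h=-h$, i.e. $(B^{*}-iI)(B^{*}+iI)h=0$, so that $h\in K_{+}+K_{-}$; the reverse inclusion and the orthogonality $K_{+}\perp_{*}K_{-}$ are immediate from $B^{*}f_{\pm}=\pm if_{\pm}$.

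With the decomposition in hand I would pass to the boundary form $\beta(f,g):=\langle B^{*}f,g\rangle-\langle f,B^{*}g\rangle$. Because every closed symmetric $T$ with $B\subseteq T$ satisfies $B\subseteq T\subseteq T^{*}\subseteq B^{*}$, such a $T$ is exactly $B^{*}$ restricted to a graph-closed subspace $D(T)$ with $D(B)\subseteq D(T)\subseteq D(B^{*})$, and $T$ is symmetric iff $\beta$ vanishes on $D(T)\times D(T)$. Since $\beta(f_{0},g)=0$ whenever $f_{0}\in D(B)$, writing $f=f_{0}+f_{+}+f_{-}$ and $g=g_{0}+g_{+}+g_{-}$ and using $B^{*}f_{\pm}=\pm if_{\pm}$ reduces $\beta$ to the deficiency part; a short computation gives
\[
\beta(f,g)=2i\left(\langle f_{-},g_{-}\rangle-\langle f_{+},g_{+}\rangle\right).
\]
Hence $T$ is symmetric precisely when $\langle f_{+},g_{+}\rangle=\langle f_{-},g_{-}\rangle$ for all $f,g\in D(T)$, which is exactly the statement that the projection of $D(T)$ onto $K_{+}\dotplus K_{-}$ is the graph $\{\,\psi+V\psi:\psi\in D(V)\,\}$ of a partial isometry $V:D(V)\subseteq K_{+}\to K_{-}$.

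This already produces the announced correspondence. Given $V\in\mathcal{V}(B)$ I would set $D(T_{V})=D(B)\dotplus(I+V)D(V)$ and $T_{V}=B^{*}\upharpoonright_{D(T_{V})}$; the previous step shows $T_{V}$ is symmetric, and it is closed because $(I+V)D(V)$ is $\langle\cdot,\cdot\rangle_{*}$-closed (an isometry carries the closed set $D(V)$ to a closed set). The map $V\mapsto T_{V}$ is injective because $(I+V)D(V)$ is recovered as the $\langle\cdot,\cdot\rangle_{*}$-orthogonal complement of $D(B)$ inside $D(T_{V})$, from which $D(V)$ and the action of $V$ can be read off; surjectivity onto $\mathcal{B}(B)$ is the content of the boundary-form analysis above.

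Finally I would settle self-adjointness by computing $T_{V}^{*}$ through the same form: $g\in D(T_{V}^{*})$ iff $\beta(f,g)=0$ for every $f\in D(T_{V})$, i.e. iff $\langle\psi,g_{+}\rangle=\langle V\psi,g_{-}\rangle$ for all $\psi\in D(V)$. Comparing the resulting domain $D(T_{V}^{*})$ with $D(T_{V})$ shows that $D(T_{V})=D(T_{V}^{*})$ holds if and only if $D(V)=K_{+}$ and $Ran(V)=K_{-}$, that is, exactly when $V$ is unitary from $K_{+}$ onto $K_{-}$; this gives the self-adjointness criterion. The main obstacle is the first von Neumann formula and the careful bookkeeping in the boundary-form computation, since the passage from $\beta\equiv0$ to the partial-isometry description of $D(T)$, and the matching of $D(T_{V})$ with $D(T_{V}^{*})$ in the self-adjoint case, are where the real content lies.
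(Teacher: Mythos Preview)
The paper does not supply its own proof of this theorem: it is quoted as a classical result with references to Schm\"udgen and Simon, and is used thereafter as a black box. Your proposal is a correct sketch of precisely the standard proof found in those references---first von Neumann formula via the graph inner product, reduction of the symmetry condition to the boundary form $\beta(f,g)=2i(\langle f_{-},g_{-}\rangle-\langle f_{+},g_{+}\rangle)$, identification of isotropic subspaces with graphs of partial isometries, and the self-adjointness criterion from comparing $D(T_{V})$ with $D(T_{V}^{*})$. There is nothing to compare beyond noting that your argument matches the textbook route the paper cites.
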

 
Suppose that $B$ in the above theorem has deficiency indices $(1,1)$ and $u_{\pm}\in K_{\pm}(B)$ is a generating vector with norm equal to $1$. The vector $u_{+}$ is called deficiency vector. For each
$\theta\in\left[ 0,\pi\right)$ one defines the operator $V_{\theta}:K_{+} (B)\longrightarrow K_{-}(B)$ given by
$V_{ \theta}(u_{+}):=e^{-2i\theta}u_{-}$. Denote the self-adjoint extensions of $B$ given by Theorem \ref{teovonneum} as $T_{\theta}$ , with $\theta\in\left[ 0,\pi\right) $, where

\begin{equation}\label{tesubteta}
D(T_{\theta})=D(B)\dotplus span\left\lbrace u_{+}+e^{-2i\theta}u_{-}\right\rbrace 
\end{equation}
and
\begin{center}
$T_{\theta}(x+cu_{+}+ce^{-2i\theta}u_{-})=Bx+ciu_{+}-cie^{-2i\theta}u_{-}$, $x\in D(B)$, $c\in\C$.
\end{center} 
 
 Denote by $\M$ the cyclicity space of $u_{+}$ for any $T_{\theta}$ which by definition is

\begin{equation*}
\M:=\overline{span\left\lbrace (T_{\theta}-zI)^{-1}u_{+}:z\in\C\setminus\R\right\rbrace }.
\end{equation*}
\begin{remark}
We know that $\M$ does not depend on $\theta$ and is a reducing subspace for $T_{\theta}$, for all
$\theta\in\left[ 0,\pi\right)$ (see \cite[Section 2]{DON}, \cite[Lemma 4.5]{LIVSIC}). Therefore, one has the
restrictions $T_{\theta}\upharpoonright_{\M}$ acting on the Hilbert space $\M$ with domain
\begin{equation*}
D(T_{\theta}\upharpoonright_{\M}):=D(T_{\theta})\cap\M
\end{equation*}
which are self-adjoint operators and have simple spectrum since by definition $u_{+}$ is cyclic for $T_{\theta}\upharpoonright_{\M}$.
\end{remark}

\subsection{Rank one singular perturbations}

Let $A$ be a self-adjoint operator on a Hilbert space $\H$. Consider the $A$-scale of Hilbert spaces
\begin{equation*}
\H_{s}\subseteq\H\subseteq\H_{-s}
\end{equation*}
where $\H_{s}:=\left( D(\vert A\vert^{\frac{s}{2}}),\Vert\cdot\Vert_{s}\right)$ with $\Vert x\Vert_{s}:=\Vert (\vert A\vert +I)^{\frac{s}{2}}x\Vert$
for all $s\geqslant 0$ and $\H_{-s}$ is the completion of $\H$ with the norm $\Vert\cdot\Vert_{-s}$, i.e. the
space of linear functionals with its usual norm $\left( \H_{s}^{*},\Vert\cdot\Vert_{\H_{s}^{*}}\right)$. Given $\varphi\in\H_{-2}\setminus\H$ and $\alpha\in\R$, rank one singular perturbations of $A$ are defined by the formal expression
%\footnote{$X^{*}$ denotes the space of linear functionals on a normed space $X$ and $\Vert\cdot\Vert_{X^{*}}$ its usual norm.}\\
\begin{equation}\label{Asubalfa}
A_{\alpha}=A+\alpha\langle\varphi,\cdot\rangle\varphi
\end{equation}
where $\langle\cdot,\cdot\rangle$ denotes the duality pairing between $\H_{-2}$ and $\H_{2}$
or simply the action of linear functionals. Our goal is to obtain self-adjoint realizations on $\H$ of the expressions $A_{\alpha}$, which will be self-adjoint extensions of a densely defined closed symmetric operator with deficiency indices $(1,1)$. The next result is the key to convert the family of formal expressions (\ref{Asubalfa}) into self-adjoint extensions.
%[Corolario 5.19, \cite{Sch}]

\begin{theorem}[Lemma 1.2.3, \cite{ALBKUR}]\label{Apunto}
Let $A$ be a self-adjoint operator on $\H$ and $\varphi\in\H_{-2}\setminus\H$. Then 
\begin{center}
$\dot{A}:=A\upharpoonright_{D(\dot{A})}$ where
$D(\dot{A}):=\left\lbrace x\in D(A):\langle\varphi,x\rangle =0\right\rbrace $ 
\end{center}
is a densely defined closed symmetric operator with deficiency indices $(1,1)$.
\end{theorem}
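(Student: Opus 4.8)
The plan is to show three things about $\dot A$: that it is densely defined, that it is symmetric and closed, and that its deficiency indices are $(1,1)$. The definition of $\dot A$ as the restriction of the self-adjoint operator $A$ to the kernel of the functional $\varphi$ immediately makes it symmetric, since $\langle \dot A x, y\rangle = \langle Ax, y\rangle = \langle x, Ay\rangle = \langle x, \dot A y\rangle$ for $x,y \in D(\dot A) \subseteq D(A)$. So the real content lies in the density of the domain, the closedness, and the computation of the deficiency indices.

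First I would establish that $D(\dot A)$ is dense. The key point is that $\varphi \in \H_{-2}\setminus\H$, so $\varphi$ restricted to $D(A) = \H_2$ is a nonzero continuous linear functional on $\H_2$, but crucially \emph{not} continuous with respect to the ambient $\H$-norm (this is exactly what $\varphi \notin \H$ encodes). I would argue that $D(\dot A) = \ker(\varphi|_{D(A)})$ is a hyperplane in $D(A)$, hence dense in $D(A)$ in the graph norm; and since $D(A)$ is dense in $\H$, it suffices to show $\ker\varphi$ is dense in $\H$. The obstruction to density would be $\varphi$ extending to a bounded functional on all of $\H$, i.e. $\varphi \in \H$ by Riesz representation; since by hypothesis $\varphi \notin \H$, no such $\H$-bounded extension exists, and one concludes $\overline{\ker\varphi} = \H$. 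This is the conceptual heart, and I expect it to be the main obstacle: one must carefully separate the sense in which $\varphi$ is bounded (on $\H_2$) from the sense in which it is unbounded (on $\H$), and verify that $\varphi \notin \H$ is precisely the condition preventing $\ker\varphi$ from being a proper closed subspace of $\H$.

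Next I would address closedness and the deficiency indices together. For closedness, I would compute $\dot A^* $. The adjoint of a restriction of a self-adjoint operator to a dense hyperplane is an extension of $A$; concretely, $D(\dot A^*) = D(A) \dotplus \text{(a one-dimensional space)}$, reflecting the single linear constraint removed. To pin this down I would use the characterization $K_\pm(\dot A) = \ker(\dot A^* \mp iI)$ and count dimensions. Since $A - iI : D(A) \to \H$ is a bijection, and $D(\dot A)$ is a codimension-one subspace of $D(A)$, the range $\operatorname{Ran}(\dot A - iI) = (A-iI)(\ker\varphi)$ is a codimension-one subspace of $\H$; hence $K_+(\dot A) = \operatorname{Ran}(\dot A - iI)^\perp$ is one-dimensional, and symmetrically $K_-(\dot A)$ is one-dimensional. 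This yields deficiency indices $(1,1)$. Closedness then follows because a symmetric operator with equal finite deficiency indices obtained as a finite-codimension restriction of a closed (here self-adjoint, hence closed) operator is itself closed; alternatively, one checks directly that $D(\dot A)$ is closed in the graph norm of $A$, which coincides with the graph norm of $\dot A$ on $D(\dot A)$.

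Finally I would assemble these pieces: symmetry is formal, density reduces to $\varphi \notin \H$, and the deficiency-index count of $(1,1)$ follows from the codimension-one nature of the single constraint $\langle\varphi,\cdot\rangle = 0$ imposed on $D(A)$. The subtle point throughout is that $\varphi$ lives in $\H_{-2}$, so the deficiency vectors $u_\pm$ solving $\dot A^* u_\pm = \pm i u_\pm$ need not lie in $\H_2$ but only in $\H$; identifying them explicitly (they are essentially $(A\mp iI)^{-1}\varphi$ interpreted in the $\H_{-2}$ scale, landing in $\H$ since $\varphi\in\H_{-2}$) would give a concrete verification that the indices are exactly $(1,1)$ rather than $(0,0)$ or larger.
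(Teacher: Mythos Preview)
The paper does not prove this theorem; it is quoted from \cite{ALBKUR} (their Lemma~1.2.3) without proof, so there is no in-paper argument to compare against. Your outline is essentially the standard argument and is correct in substance, with two minor points worth tightening.

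First, the phrase ``$D(\dot A)$ is a hyperplane in $D(A)$, hence dense in $D(A)$ in the graph norm'' is backwards: $\ker\varphi$ is a \emph{closed} hyperplane in $\H_2$ since $\varphi$ is $\H_2$-continuous, so it is not graph-norm dense. You do not actually use this; what you need, and correctly argue in the next sentence, is that $\ker\varphi$ is dense in $\H$ with respect to the $\H$-norm, and that follows from $\varphi\notin\H$ exactly as you describe.

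Second, in computing the deficiency indices you pass from ``$(A-iI)(\ker\varphi)$ has algebraic codimension one in $\H$'' to ``its orthogonal complement is one-dimensional.'' This inference requires the range to be closed in $\H$; that holds because $(A-iI)\colon\H_2\to\H$ is a topological isomorphism and $\ker\varphi$ is closed in $\H_2$, but you should say so explicitly. With this detail supplied, the deficiency spaces are indeed one-dimensional, and, as the paper records immediately after the theorem, they are spanned by $g_{\pm i}=(A\mp iI)^{-1}\varphi$ interpreted in the generalized sense, which matches your final paragraph.
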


We briefly present the approach of \cite{ALBKUR}. Consider the operator 
\begin{equation}\label{opergeneralsen}
(A\pm iI)^{-1}:\H_{s-2}\longrightarrow\H_{s},\;s=0,1
\end{equation}
in the generalized sense, i.e. for $\phi\in\H_{s-2}$ and $\eta\in\H_{s}$
%the operator $(A\pm iI)^{-1}:\H\longrightarrow\H_{2}$ is densely extended to the spaces $\H_{s-2}$ and $\H_{s}$.  
\begin{equation*}
\langle\phi, (A\mp iI)^{-1}\eta\rangle = \langle(A\pm iI)^{-1}\phi,\eta\rangle.
\end{equation*}

By the above theorem and using the first formula of von Neumann (see \cite[Theorem 8.11]{Weid} and \cite[Theorem 7.1.11]{BS}), it turns out that
\begin{equation}\label{neumanformula}
D(\dot{A}^{*})=D(\dot{A})\dotplus span\left\lbrace g_{i},g_{-i}\right\rbrace
\end{equation}
where
\begin{equation*}
g_{\pm i}:=(A\mp i)^{-1}\varphi.
\end{equation*}
are the deficiency vectors for $\dot{A}$. We get the family of self-adjoint extensions of $\dot{A}$ given by Theorem \ref{teovonneum} as $A(v)$, where $v\in\S^{1}$, set of unimodular complex numbers,  such that

\begin{equation}
D(A(v))=\left\lbrace x+a_{+}g_{i}+a_{-}g_{-i}\in D(\dot{A}^{*}):a_{-}=-\overline{v} a_{+}\right\rbrace .
\end{equation}

One has that
\begin{equation*}
A(A^{2}+I)^{-1}\varphi=\frac{1}{2}\left[ (A-iI)^{-1}\varphi+(A+iI)^{-1}\varphi\right] \in\H.
\end{equation*}
So we can write (\ref{neumanformula}) of the next form

\begin{equation}\label{defsumdosspace}
D(\dot{A}^{*})=D(A)\dotplus span\left\lbrace A(A^{2}+I)^{-1}\varphi\right\rbrace .
\end{equation}

This makes another family of self-adjoint extensions $A^{\gamma}$ of $\dot{A}$, with $\gamma\in\R\cup\left\lbrace\infty \right\rbrace $, given by
\begin{equation*}
D(A^{\gamma})=\left\lbrace x+bA(A^{2}+I)^{-1}\varphi\in D(\dot{A}^{*}):\varphi(x)=\gamma b\right\rbrace .
\end{equation*} 

Both families of extensions are related by the formula
\begin{equation}\label{v}
v=\frac{\gamma +i}{\gamma -i}.
\end{equation}

By (\ref{opergeneralsen}), the following is remarked
\begin{itemize}
\item[i)] If $\varphi\in\H_{-1}\setminus\H$, $\langle\varphi,(A-zI)^{-1}\varphi\rangle$ exists because
$(A-zI)^{-1}\varphi\in\H_{1}$.
\item[ii)] If $\varphi\in\H_{-2}\setminus\H_{-1}$, $\langle\varphi,(A-zI)^{-1}\varphi\rangle$ is not well-defined
since $(A-zI)^{-1}\varphi\in\H$ but in general $(A-zI)^{-1}\varphi\not\in\H_{2}$.
\end{itemize}
In case ii) the functional $\varphi$ cannot be extended to the space $D(\dot{A}^{*})$ given by (\ref{defsumdosspace}). So we must renormalize the expression
\begin{center}
$\langle\varphi, A(A^{2}+I)^{-1}\varphi\rangle$.
\end{center}

For all $c\in\R$, the linear functional $\varphi_{c}:D(\dot{A}^{*})\longrightarrow\C$ is given by
\begin{center}
$\langle\varphi_{c}, x+bA(A^{2}+I)^{-1}\varphi\rangle =\langle\varphi ,x\rangle+bc,\;x\in D(A),\;b\in\C$.
\end{center}

In order to define $A_{\alpha}$ as a restriction of $\dot{A}^{*}$ we need to extend the functional $\varphi$. The only extensions of $\varphi$ to $D(\dot{A}^{*})$ are given by $\varphi_{c}$ (see \cite[Lemma 1.3.1]{ALBKUR}). Thus one has as many self-adjoint realizations of $A_{\alpha}$ as extensions $\varphi_{c}$. Therefore,
\begin{equation*}
A_{\alpha}=\left\lbrace A_{\alpha ,c}\right\rbrace_{c\in\R}
\end{equation*}
where
\begin{equation}\label{alface}
A_{\alpha ,c}=A+\langle\varphi_{c},\cdot\rangle\varphi
\end{equation}

\begin{remark}
In case i), the unique extension $\varphi_{c}$ is given by 
\begin{equation*}
c=\langle\varphi,(A-zI)^{-1}\varphi\rangle.
\end{equation*}
Therefore, $A_{\alpha}$ is uniquely defined.
\end{remark}

We describe self-adjoint realizations of $A_{\alpha ,c}$ in terms of the operators (\ref{tesubteta}). Consider
that $arg$ function is valued in $\left[ 0,2\pi\right)$.

\begin{theorem}\label{ultimate}
Let $\varphi\in\H_{-2}\setminus\H$. Consider $A_{\alpha ,c}$ given by (\ref{alface}), $\dot{A}$ as in Theorem \ref{Apunto} and $T_{\theta}$ given by (\ref{tesubteta}) with $B=\dot{A}$. Then $A_{\alpha ,c}=T_{\theta}$ where
\begin{equation}\label{tetace}
\theta=\frac{1}{2}arg\left[ -\frac{1+\alpha(c-i)}{1+\alpha(c+i)}\right] .
\end{equation}
\end{theorem}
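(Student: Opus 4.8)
The plan is to identify $A_{\alpha,c}$ with one of the von~Neumann extensions $T_{\theta}$ by tracking the boundary condition that defines its domain through the two parametrizations already set up in the excerpt: the family $A(v)$, $v\in\S^{1}$, and the family $A^{\gamma}$, $\gamma\in\R\cup\{\infty\}$, linked by $v=(\gamma+i)/(\gamma-i)$. First I would pin down the domain and action of $A_{\alpha,c}$ as a restriction of $\dot A^{*}$. Writing a generic element of $D(\dot A^{*})$ in the decomposition (\ref{defsumdosspace}) as $u=x+b\,\Phi$ with $\Phi:=A(A^{2}+I)^{-1}\varphi$, $x\in D(A)$, $b\in\C$, I would apply the formal expression $A_{\alpha,c}u=Au+\alpha\langle\varphi_{c},u\rangle\varphi$ and isolate the singular part. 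Since $A\Phi$ carries a formal $\varphi$-contribution (because $A^{2}(A^{2}+I)^{-1}=I-(A^{2}+I)^{-1}$ while $\varphi\notin\H$), requiring $A_{\alpha,c}u\in\H$ forces the coefficient of $\varphi$ to vanish, yielding the boundary condition $b+\alpha\langle\varphi_{c},u\rangle=0$. On this domain $A_{\alpha,c}$ then acts as $\dot A^{*}$, so it is genuinely a self-adjoint extension of $\dot A$.

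Next I would convert this boundary condition into the $A^{\gamma}$-parametrization. Using $\langle\varphi_{c},u\rangle=\langle\varphi,x\rangle+bc$ from the definition of $\varphi_{c}$, the condition becomes $\langle\varphi,x\rangle=-\tfrac{1+\alpha c}{\alpha}\,b$, which is exactly the relation defining $A^{\gamma}$ with $\gamma=-\tfrac{1+\alpha c}{\alpha}$. Hence $A_{\alpha,c}=A^{\gamma}$ for this $\gamma$. Substituting into (\ref{v}) and simplifying gives $v=\tfrac{1+\alpha(c-i)}{1+\alpha(c+i)}$.

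Finally I would match the $A(v)$ description against the $T_{\theta}$ description. Normalizing $u_{\pm}=g_{\pm i}/\|g_{\pm i}\|$ (the two norms agree, since $\|g_{\pm i}\|^{2}=\langle\varphi,(A^{2}+I)^{-1}\varphi\rangle$ in both cases), the spanning vector of $D(T_{\theta})$ in (\ref{tesubteta}) is proportional to $g_{i}+e^{-2i\theta}g_{-i}$, whereas that of $D(A(v))$ is $g_{i}-\overline{v}\,g_{-i}$. Equating the two one-dimensional spans forces $e^{-2i\theta}=-\overline{v}$, i.e. $v=-e^{2i\theta}$, so that $\theta=\tfrac12\arg(-v)$ with the stated convention $\arg\in[0,2\pi)$ and $\theta\in[0,\pi)$. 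Inserting the value of $v$ from the previous step reproduces precisely (\ref{tetace}); since two self-adjoint extensions of $\dot A$ with identical domains coincide, this gives $A_{\alpha,c}=T_{\theta}$.

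I expect the delicate step to be the first one: correctly extracting the boundary condition from the formal expression, because it requires handling the singular ($\varphi\notin\H$) term in $A\Phi$ and invoking the renormalization built into $\varphi_{c}$, rather than manipulating honest elements of $\H$. The other place where care is needed is the phase and normalization bookkeeping in the last step, namely verifying $u_{\pm}=g_{\pm i}/\|g_{\pm i}\|$ with equal norms and tracking the sign correspondence $-\overline{v}\leftrightarrow e^{-2i\theta}$ consistently so that the argument lands in the correct branch.
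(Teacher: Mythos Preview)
Your proposal is correct and follows essentially the same route as the paper: identify $A_{\alpha,c}$ with $A^{\gamma}$ for $\gamma=-(1/\alpha+c)$, pass to $A(v)$ via (\ref{v}), and then match $A(v)$ with $T_{\theta}$ through $\theta=\tfrac{1}{2}\arg(-v)$. The only difference is that where the paper invokes \cite[Theorems~1.3.1--1.3.2]{ALBKUR} for the first identification and simply asserts the relation $T_{\theta}=A(v)\Leftrightarrow\theta=\tfrac{1}{2}\arg(-v)$, you supply direct arguments (extracting the boundary condition from the formal expression, and comparing the spanning vectors $g_{i}+e^{-2i\theta}g_{-i}$ versus $g_{i}-\overline{v}g_{-i}$ after checking $\|g_{i}\|=\|g_{-i}\|$).
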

\begin{proof}
Both by \cite[Theorem 1.3.1]{ALBKUR} and by \cite[Theorem 1.3.2]{ALBKUR}, one has that $A_{\alpha ,c}=A^{\gamma}$
where
\begin{equation*}
\gamma=-\left( \frac{1}{\alpha}+c\right) .
\end{equation*}
Substituting the above formula in (\ref{v}) we obtain that
\begin{equation}\label{vform}
v=\frac{1+\alpha(c-i)}{1+\alpha(c+i)}
\end{equation}
of which $A_{\alpha ,c}=A(v)$. On the other hand, given $\theta\in\left[ 0,\pi\right) $ and $v\in\S^{1}$,
$T_{\theta}=A(v)$ iff

\begin{equation*}
\theta =\frac{1}{2}arg(-v)
\end{equation*}
Substituting formula (\ref{vform}) in the last expression we obtain (\ref{tetace}) and $A_{\alpha ,c}=T_{\theta}$.
%Del corolario \ref{A(v)} y la f\'ormula anterior tenemos el siguiente resultado.
\end{proof}

\begin{remark}
$A=A_{0 ,c}=T_{\frac{\pi}{2}}=A(1)=A^{\infty}$.
\end{remark}

\section{Scalar Nevanlinna-Herglotz functions}

We start with essential facts on Nevalinna-Herglotz functions. For positive Borel measures $\mu$ such that 
\begin{equation}\label{grande}
\int \frac{d\mu(x)}{1+x^{2}}<\infty 
\end{equation}
it is defined the function $F_{\mu}:\C^{+}\longrightarrow\C^{+} $, where $\C^{+}$ is the complex upper half-plane, given by
\begin{equation*}
F_{\mu}(z):=\int \left( \frac{1}{x-z}-\frac{x}{1+x^{2}}\right) d\mu(x).
\end{equation*}
By Canonical Integral Representation of Nevanlinna-Herglotz functions (see \cite[Theorem 2.2(iii)]{GT}, \cite[Theorem F.1]{Sch}), $F_{\mu}$ is a Nevanlinna-Herglotz function. We show some properties for these functions. The following proposition is a extension of \cite[Theorem 1.2(iii)]{BSpaper}. Although it is a well-known fact, for the convenience
of the reader a proof is offered. 

\begin{prop}\label{implicacion}
Let $\mu$ be a positive Borel measure satisfying (\ref{grande}). Suppose $y\in\R$ holds
\begin{equation*}
\int_{\R} \frac{d\mu(x)}{(x-y)^{2}}<\infty.
\end{equation*}
Then 
\begin{equation*}
F_{\mu}(y+i0):=\lim_{\varepsilon\rightarrow 0}F_{\mu}(y+i\varepsilon)
\end{equation*}
exists and is real.
\end{prop}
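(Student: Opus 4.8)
The plan is to show that the hypothesis $\int_{\R} \frac{d\mu(x)}{(x-y)^2} < \infty$ forces the imaginary part of $F_\mu$ to vanish at the real boundary point $y$ and the real part to converge to a finite real limit. The key observation is that the integrand defining $F_\mu$ can be controlled uniformly in $\varepsilon$ by the square-summable bound, which licenses a dominated-convergence argument. First I would write, for $z = y + i\varepsilon$ with $\varepsilon > 0$,
\begin{equation*}
\operatorname{Im} F_\mu(y+i\varepsilon) = \int_\R \frac{\varepsilon}{(x-y)^2 + \varepsilon^2}\, d\mu(x).
\end{equation*}
The aim is to show this tends to $0$ as $\varepsilon \to 0^+$. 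Pointwise in $x \neq y$ the integrand goes to $0$, and the elementary bound $\frac{\varepsilon}{(x-y)^2+\varepsilon^2} \leqslant \frac{1}{\varepsilon} \wedge \frac{\varepsilon}{(x-y)^2}$ is what I would exploit, together with the key inequality $\frac{\varepsilon}{(x-y)^2+\varepsilon^2} \leqslant \frac{C}{(x-y)^2}$ away from $y$; the finiteness of $\int d\mu(x)/(x-y)^2$ then supplies a dominating function. A slight care is needed near $x = y$ because the naive bound blows up, so I would note that the finiteness of the square-integral in particular forces $\mu(\{y\}) = 0$, eliminating any atomic obstruction at the critical point.

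Next, for the real part, I would write
\begin{equation*}
\operatorname{Re} F_\mu(y+i\varepsilon) = \int_\R \left( \frac{x-y}{(x-y)^2+\varepsilon^2} - \frac{x}{1+x^2} \right) d\mu(x),
\end{equation*}
and argue that the candidate limit is
\begin{equation*}
\int_\R \left( \frac{1}{x-y} - \frac{x}{1+x^2} \right) d\mu(x),
\end{equation*}
which I would first verify is a well-defined finite real number. Here the Cauchy–Schwarz inequality is the natural tool: splitting the integrand and using that $\mu$ satisfies the growth condition (\ref{grande}) together with $\int d\mu/(x-y)^2 < \infty$ controls the term $\frac{1}{x-y}$ in $L^1(d\mu)$ on the relevant regions. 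The convergence of the $\varepsilon$-regularized integrand to its limit would again be justified by dominated convergence, using the pointwise bound $\left| \frac{x-y}{(x-y)^2+\varepsilon^2} \right| \leqslant \frac{1}{|x-y|}$ and the bound $\left|\frac{x-y}{(x-y)^2+\varepsilon^2}\right| \leqslant \frac{1}{2\varepsilon}$ merged into a single dominating function in $L^1(d\mu)$.

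The main obstacle I anticipate is constructing a legitimate dominating function that is uniform in $\varepsilon$ and integrable against $d\mu$, precisely in a neighborhood of the critical point $x = y$, where both the real-part and imaginary-part integrands are delicate. The resolution is to use the hypothesis in its $L^2$-against-$d\mu$ form rather than pointwise: the finiteness of $\int d\mu(x)/(x-y)^2$ is exactly the statement that $(x-y)^{-1} \in L^2(d\mu)$, and I would leverage this via Cauchy–Schwarz to absorb the singular factors, while the bound $\frac{\varepsilon}{(x-y)^2+\varepsilon^2} \leqslant \frac{1}{2|x-y|}$ gives a clean $\varepsilon$-independent envelope that is integrable by a final application of Cauchy–Schwarz against the finite measure. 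Once the dominating functions are in place, the reality of the limit follows since $\operatorname{Im} F_\mu(y+i0) = 0$, completing the argument.
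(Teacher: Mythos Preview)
Your direct approach via dominated convergence is sound in outline and differs from the paper's. The paper does not split into real and imaginary parts; instead it introduces the finite measure $d\rho(x):=\frac{d\mu(x)}{1+x^2}$, checks the algebraic identity $F_\mu(z)=z\,\rho(\R)+(1+z^2)J(z)$ with $J(z)=\int_\R\frac{d\rho(x)}{x-z}$, observes that $\int_\R\frac{d\rho(x)}{(x-y)^2}\leqslant\int_\R\frac{d\mu(x)}{(x-y)^2}<\infty$, and then invokes the already-known finite-measure version of the statement (Simon's Theorem~1.2(iii)) for $J$. Your route is self-contained; the paper's is shorter because it outsources the analysis to a cited lemma.

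One step in your plan needs tightening. You propose $\frac{1}{|x-y|}$ as an $\varepsilon$-independent envelope, integrable ``by Cauchy--Schwarz against the finite measure''. But $\mu$ need not be finite, and in general $\frac{1}{|x-y|}\notin L^1(d\mu)$: take $d\mu$ equal to Lebesgue measure on $\{|x|>1\}$, which satisfies both hypotheses at $y=0$ yet has $\int_{|x|>1}\frac{dx}{|x|}=\infty$. Cauchy--Schwarz only yields \emph{local} integrability of $\frac{1}{|x-y|}$, on bounded sets where $\mu$ is indeed finite. For the tail you must keep the full real-part integrand $\frac{x-y}{(x-y)^2+\varepsilon^2}-\frac{x}{1+x^2}$ together and bound it by $\frac{C(y)}{1+x^2}$ uniformly in $\varepsilon\in(0,1]$, using that the leading $1/x$ behaviour cancels. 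For the imaginary part no such issue arises: the crude bound $\frac{\varepsilon}{(x-y)^2+\varepsilon^2}\leqslant\frac{\varepsilon}{(x-y)^2}$ already gives $\operatorname{Im}F_\mu(y+i\varepsilon)\leqslant\varepsilon\int_\R\frac{d\mu(x)}{(x-y)^2}\to 0$ with no dominated convergence needed. With this adjustment your argument goes through; note that passing to $d\rho=\frac{d\mu}{1+x^2}$ is precisely what would make your Cauchy--Schwarz step global, which is the paper's reduction in disguise.
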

\begin{proof}
Let $d\rho(x):=\frac{d\mu(x)}{1+x^{2}}$ be a finite measure. Therefore there exists the function
\begin{equation*}
J(z):=\int_{\R} \frac{1}{x-z} d\rho(x).
\end{equation*}
Then
\begin{align*}
F_{\mu}(z)&=\int_{\R}  \frac{1+zx}{x-z} d\rho(x)\\
		&=\int_{\R}  \frac{zx-z^{2}}{x-z} d\rho(x)+\int_{\R}  \frac{1+z^{2}}{x-z} d\rho(x)\\
		&= z\rho(\R)+(1+z^{2})J(z).
\end{align*}

Furthermore
\begin{equation}\label{desigualdadmia}
\int_{\R} \frac{d\rho(x)}{(x-y)^{2}}\leqslant\int_{\R} \frac{(1+x^{2})d\rho(x)}{(x-y)^{2}}
=\int_{\R} \frac{d\mu(x)}{(x-y)^{2}}<\infty.
\end{equation}
By \cite[Theorem 1.2(iii)]{BSpaper},
\begin{equation*}
J(y+i0):=\lim_{\varepsilon\rightarrow 0}J(y+i\varepsilon)
\end{equation*}
exists and is real. Thus, we have concluded.
\end{proof}

%We demonstrate a lemma related to Nevanlinna-Herglotz scalar functions.
The next lemma appears in the proof of \cite[Theorem 2.1]{RMS} for the case of Borel-Stieltjes transformations.

\begin{lemma}\label{lemaG}
Let $\mu$ be a positive Borel measure satisfying (\ref{grande}). Given $\lambda\in\R$, the functions
\begin{equation*}
G_{n}(\lambda):=\int_{\R} \frac{d\mu(x)}{(x-\lambda)^{2}+\frac{1}{n^{2}}}
\end{equation*}
are continuous and
\begin{equation*}
\int_{\R}\frac{d\mu(x)}{(x-\lambda)^{2}}=\lim_{n\rightarrow\infty}G_{n}(\lambda).
\end{equation*}
\end{lemma}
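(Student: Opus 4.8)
The plan is to treat the two assertions separately, each reducing to a classical convergence theorem for Lebesgue integrals. The continuity of $G_{n}$ will follow from dominated convergence, while the identification of the limit will follow from monotone convergence once the integrand is recognized as monotone in $n$.

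First I would verify that $G_{n}(\lambda)$ is finite for each fixed $n$ and $\lambda$, which is what makes the statement meaningful. The elementary observation is that the function $x\mapsto\frac{1+x^{2}}{(x-\lambda)^{2}+1/n^{2}}$ is continuous on $\R$ and tends to $1$ as $\vert x\vert\rightarrow\infty$, hence is bounded by some constant $M=M(n,\lambda)$. This yields the pointwise bound
\begin{equation*}
\frac{1}{(x-\lambda)^{2}+\frac{1}{n^{2}}}\leqslant\frac{M}{1+x^{2}},
\end{equation*}
so that $G_{n}(\lambda)\leqslant M\int_{\R}\frac{d\mu(x)}{1+x^{2}}<\infty$ by (\ref{grande}). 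For continuity, fix $n$ and let $\lambda$ range over a compact interval $K\subset\R$. The same reasoning, now applied uniformly, shows that $\frac{1+x^{2}}{(x-\lambda)^{2}+1/n^{2}}$ is bounded on $\R\times K$ by a single constant $M_{K}$, since it is continuous and converges to $1$ uniformly in $\lambda\in K$ as $\vert x\vert\rightarrow\infty$. Thus $\frac{M_{K}}{1+x^{2}}$ is a $\mu$-integrable dominating function valid for all $\lambda\in K$, and since the integrand is continuous in $\lambda$ for each $x$, the dominated convergence theorem gives that $G_{n}$ is continuous on $K$, and hence on all of $\R$.

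For the limit I would note that the sequence of integrands increases monotonically: for fixed $x\neq\lambda$ the quantity $(x-\lambda)^{2}+\frac{1}{n^{2}}$ decreases as $n$ grows, so $\frac{1}{(x-\lambda)^{2}+1/n^{2}}$ increases to $\frac{1}{(x-\lambda)^{2}}$, and at $x=\lambda$ both expressions are interpreted as $+\infty$. The monotone convergence theorem then yields
\begin{equation*}
\lim_{n\rightarrow\infty}G_{n}(\lambda)=\int_{\R}\lim_{n\rightarrow\infty}\frac{d\mu(x)}{(x-\lambda)^{2}+\frac{1}{n^{2}}}=\int_{\R}\frac{d\mu(x)}{(x-\lambda)^{2}},
\end{equation*}
with the right-hand side possibly equal to $+\infty$, which is precisely the asserted identity.

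The only delicate point, and the step I would call the main obstacle, is the construction of the dominating function for continuity: one must ensure the bound is uniform in $\lambda$ over a neighborhood, handling simultaneously the region near $x=\lambda$ (where the integrand is large but stays bounded by $n^{2}$) and the behavior at infinity (where the growth condition (\ref{grande}) is essential). Once this uniform bound is in place, both halves of the lemma reduce to routine applications of the classical convergence theorems.
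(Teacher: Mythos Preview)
Your proof is correct. The monotone convergence argument for the limit is exactly what the paper does. For continuity, however, the paper takes a different and much shorter route: it observes the identity
\[
G_{n}(\lambda)=n\,\mathrm{Im}\,F_{\mu}\!\left(\lambda+\tfrac{i}{n}\right),
\]
which follows at once from $\mathrm{Im}\,\frac{1}{x-z}=\frac{\mathrm{Im}\,z}{\vert x-z\vert^{2}}$ and the fact that the subtracted term $\frac{x}{1+x^{2}}$ is real. Continuity of $G_{n}$ is then immediate because $F_{\mu}$ is analytic (hence continuous) on the upper half-plane. Your dominated-convergence argument with the uniform bound $\frac{1+x^{2}}{(x-\lambda)^{2}+1/n^{2}}\leqslant M_{K}$ on compacta is perfectly valid and more self-contained, but it forgoes the connection to the Herglotz framework that the paper has already set up and will continue to exploit; the paper's identity also makes finiteness of $G_{n}(\lambda)$ automatic, whereas you had to argue it separately.
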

\begin{proof}
By doing some calculations we find that
\begin{equation*}
G_{n}(\lambda)=nImF_{\mu}\left( \lambda +i\frac{1}{n}\right).
\end{equation*}
Since the function on the right hand side is continuous with $n$ fixed, so is $G_{n}$ for all $n\in\N$. By Monotonous Convergence Theorem, the second holds.
\end{proof}

We provide the following definitions.

\begin{definition}
\begin{itemize}
\item Let $X$ be a metric space. A subset $M\subseteq X$ is $\textit{G}_{\delta}$ in $X$ if there is some countable family $\left\lbrace M_{i} \right\rbrace_{i\in\N}$ of open subsets in $X$ such that $M=\bigcap_{i\in\N}M_{i}$.
\item A subset $S\subseteq\R$ is called a support of a Borel measure $\mu$ if $\mu\left( \R\setminus S\right)=0$.
\item The smallest closed support of $\mu$ is called the topological support of $\mu$ and denoted by $supp(\mu)$.
\end{itemize}
\end{definition} 

Due to the previous results, a generalization of \cite[Theorem 2.1]{RMS} for a larger class of measures is proven.
%The following is a generalization of \cite[Theorem 2.1]{RMS} and is a particular case of theorem \ref{teo2.1gral}.

\begin{prop}\label{teo2.1}
Let $\mu$ such that (\ref{grande}) holds. Then
\begin{equation}\label{conjuntomio}
\left\lbrace y\in supp(\mu):\int_{\R} \frac{d\mu(x)}{(x-y)^{2}}=\infty\right\rbrace 
\end{equation}
is dense $\textit{G}_{\delta}$ in $supp(\mu)$.
\end{prop}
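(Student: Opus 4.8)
The plan is to realize the set in (\ref{conjuntomio}) as a countable intersection of dense open subsets of $supp(\mu)$ and then invoke the Baire category theorem, using that $supp(\mu)$ is closed in $\R$ and hence a complete metric space. Write $H(y):=\int_{\R}\frac{d\mu(x)}{(x-y)^{2}}\in[0,\infty]$. By Lemma \ref{lemaG} the functions $G_{n}$ are continuous and increase pointwise to $H$, so $H=\sup_{n}G_{n}$. Consequently, for each $m\in\N$ the set
\begin{equation*}
U_{m}:=\left\lbrace y\in supp(\mu):H(y)>m\right\rbrace=\bigcup_{n\in\N}\left\lbrace y\in supp(\mu):G_{n}(y)>m\right\rbrace
\end{equation*}
is open in $supp(\mu)$, and the set in (\ref{conjuntomio}) is exactly $\bigcap_{m\in\N}U_{m}$, hence $\textit{G}_{\delta}$. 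It remains to prove that each $U_{m}$ is dense.

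For density I would argue by contradiction on a basic neighborhood. Fix $y_{0}\in supp(\mu)$ and $\delta>0$ and set $J:=(y_{0}-\delta,y_{0}+\delta)$; since $y_{0}$ lies in the topological support, $\mu(J)>0$. If $U_{m}\cap J=\varnothing$, then $H(y)\leqslant m$ for every $y\in J\cap supp(\mu)$, and because $\mu(\R\setminus supp(\mu))=0$ this yields $\int_{J}H\,d\mu\leqslant m\,\mu(J)<\infty$. On the other hand, $H(y)\geqslant\int_{J}\frac{d\mu(x)}{(x-y)^{2}}$, so Tonelli's theorem gives
\begin{equation*}
\int_{J}H\,d\mu\geqslant\int_{J}\int_{J}\frac{d\mu(x)\,d\mu(y)}{(x-y)^{2}}.
\end{equation*}
The crux is to show that this double (energy) integral is infinite whenever $\mu(J)>0$, which contradicts the finite bound above and thereby forces a point of $U_{m}$ into every neighborhood of $y_{0}$.

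The hard part is therefore the divergence of the energy integral, and I would establish it by a partition-plus-Cauchy-Schwarz estimate. Split $J$ into $N$ disjoint subintervals $J_{1},\dots,J_{N}$ of equal length $2\delta/N$. For $x,y$ in the same $J_{k}$ one has $(x-y)^{2}\leqslant(2\delta/N)^{2}$, so
\begin{equation*}
\int_{J}\int_{J}\frac{d\mu(x)\,d\mu(y)}{(x-y)^{2}}\geqslant\sum_{k=1}^{N}\int_{J_{k}}\int_{J_{k}}\frac{d\mu(x)\,d\mu(y)}{(x-y)^{2}}\geqslant\frac{N^{2}}{4\delta^{2}}\sum_{k=1}^{N}\mu(J_{k})^{2}.
\end{equation*}
Applying the Cauchy-Schwarz inequality to $\mu(J)=\sum_{k}\mu(J_{k})$ gives $\sum_{k}\mu(J_{k})^{2}\geqslant\mu(J)^{2}/N$, whence the right-hand side is at least $\frac{N\,\mu(J)^{2}}{4\delta^{2}}$, which tends to $\infty$ as $N\to\infty$. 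This proves that the double integral diverges and completes the contradiction. Thus every $U_{m}$ is dense open in $supp(\mu)$, and the Baire category theorem shows that $\bigcap_{m\in\N}U_{m}$, which is precisely the set in (\ref{conjuntomio}), is dense $\textit{G}_{\delta}$ in $supp(\mu)$.
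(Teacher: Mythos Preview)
Your proof is correct. The $G_\delta$ part coincides with the paper's argument (both write the set as $\bigcap_m\bigcup_n\{G_n>m\}$ using Lemma \ref{lemaG}), but your density argument is genuinely different. The paper reduces to the finite-measure case by passing to $d\rho=d\mu/(1+x^2)$, notes via inequality (\ref{desigualdadmia}) that divergence for $\rho$ implies divergence for $\mu$, and then invokes \cite[Theorem 2.1]{RMS} for the finite measure $\rho$ together with $supp(\rho)=supp(\mu)$. You instead give a fully self-contained energy argument: assuming $H\leqslant m$ on a basic neighborhood, you bound $\int_J H\,d\mu$ above by $m\,\mu(J)$ and below by the double integral $\iint_{J\times J}(x-y)^{-2}\,d\mu\,d\mu$, and then force the latter to diverge by the partition-plus-Cauchy--Schwarz estimate. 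What this buys you is independence from the cited result, and in fact your argument would reprove \cite[Theorem 2.1]{RMS} itself; what the paper's approach buys is brevity once that reference is in hand. One small point you use implicitly and might state: $\mu(J)<\infty$ for bounded $J$, which follows from (\ref{grande}) since $1+x^2$ is bounded on $J$.
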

\begin{proof}
Let $d\rho(x):=\frac{d\mu(x)}{1+x^{2}}$ be a finite measure and
\begin{equation*}
A:=\left\lbrace y\in\R:\int_{\R} \frac{d\rho(x)}{(x-y)^{2}}=\infty\right\rbrace.
\end{equation*}
By (\ref{desigualdadmia}),
\begin{equation*}
A\subseteq\left\lbrace y\in\R:\int_{\R} \frac{d\mu(x)}{(x-y)^{2}}=\infty\right\rbrace.
\end{equation*}

Due to that $\rho$ and $\mu$ are equivalent we have
\begin{equation*}
A\cap supp(\rho)\subseteq supp(\mu)\cap\left\lbrace y\in\R:\int_{\R} \frac{d\mu(x)}{(x-y)^{2}}=\infty\right\rbrace.
\end{equation*}
By \cite[Theorem 2.1]{RMS}, $A$ is dense in $supp(\rho)$ and hence the set (\ref{conjuntomio}) is dense
in $supp(\mu)$.\\

By continuity of $G_{n}$ according to Lemma \ref{lemaG} and since (\ref{conjuntomio}) turns out to be
\begin{equation*}
\bigcap_{m\in\N}\bigcup_{n\in\N}\left\lbrace y\in supp(\mu):\int_{\R}\frac{d\mu(x)}{(x-y)^{2}+\frac{1}{n^{2}}}>m\right\rbrace .
\end{equation*}
It follows that (\ref{conjuntomio}) is $\textit{G}_{\delta}$ in $supp(\mu)$.
\end{proof}

We conclude with the following corollary.

\begin{corollary}\label{teo2.1--}
Let $\mu$ such that (\ref{grande}) holds. Then
\begin{equation*}
supp(\mu)\bigcap\left\lbrace y\in\R:\int_{\R}\frac{d\mu(x)}{(x-y)^{2}}<\infty\right\rbrace 
\end{equation*}
is a countable union of closed nowhere dense subsets in $supp(\mu)$.
\end{corollary}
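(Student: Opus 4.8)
The plan is to deduce this corollary as the complementary statement to Proposition \ref{teo2.1}. The set in question is precisely the relative complement in $supp(\mu)$ of the set
\begin{equation*}
E:=\left\lbrace y\in supp(\mu):\int_{\R} \frac{d\mu(x)}{(x-y)^{2}}=\infty\right\rbrace,
\end{equation*}
so first I would rewrite the corollary's set as $supp(\mu)\setminus E$. Since Proposition \ref{teo2.1} tells us that $E$ is dense $\textit{G}_{\delta}$ in $supp(\mu)$, the corollary will follow by a standard topological complementation argument once we work inside the metric space $X:=supp(\mu)$ with its subspace topology.

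Next I would use the explicit $\textit{G}_{\delta}$ decomposition already exhibited in the proof of Proposition \ref{teo2.1}, namely
\begin{equation*}
E=\bigcap_{m\in\N}U_{m},\qquad U_{m}:=\bigcup_{n\in\N}\left\lbrace y\in supp(\mu):\int_{\R}\frac{d\mu(x)}{(x-y)^{2}+\frac{1}{n^{2}}}>m\right\rbrace,
\end{equation*}
where each $U_{m}$ is open in $supp(\mu)$ by continuity of the functions $G_{n}$ (Lemma \ref{lemaG}). Taking complements relative to $X$ gives
\begin{equation*}
supp(\mu)\setminus E=\bigcup_{m\in\N}\left( supp(\mu)\setminus U_{m}\right),
\end{equation*}
so the target set is a countable union of the closed sets $C_{m}:=supp(\mu)\setminus U_{m}$. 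It then remains to check that each $C_{m}$ is nowhere dense in $supp(\mu)$.

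For the nowhere-density, I would argue that the closure of each $C_{m}$ has empty interior. Since $C_{m}$ is already closed, $\overline{C_{m}}=C_{m}$, so it suffices to show $C_{m}$ contains no nonempty open subset of $X$. This is where the density half of Proposition \ref{teo2.1} enters: because $E\subseteq U_{m}$ and $E$ is dense in $X$, every nonempty open subset of $X$ meets $E$ and hence meets $U_{m}$, so no nonempty open set can be contained in $C_{m}=X\setminus U_{m}$. Thus each $C_{m}$ is closed with empty interior, i.e. nowhere dense, and $supp(\mu)\setminus E=\bigcup_{m}C_{m}$ is the desired countable union of closed nowhere dense sets.

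The argument is essentially a routine Baire-category complementation, and I expect no serious analytic obstacle here since all the real work—the integral estimate \eqref{desigualdadmia}, the continuity in Lemma \ref{lemaG}, and the density coming from \cite[Theorem 2.1]{RMS}—has already been packaged into Proposition \ref{teo2.1}. The only point requiring mild care is to keep everything relative to the subspace topology on $supp(\mu)$ rather than on all of $\R$: the openness of the $U_{m}$ and the density of $E$ must both be read in $X=supp(\mu)$, which is exactly how they are furnished by the proposition, so the complementation goes through cleanly.
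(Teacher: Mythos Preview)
Your proof is correct and follows exactly the intended route: the paper states this as an immediate corollary of Proposition \ref{teo2.1} without further argument, and the complementation you spell out (writing the set as $supp(\mu)\setminus E$, using the explicit $G_\delta$ decomposition $E=\bigcap_m U_m$, and invoking density of $E$ to conclude each $C_m=supp(\mu)\setminus U_m$ is closed nowhere dense) is precisely the standard Baire-category complementation the paper is implicitly relying on.
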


Our goal in the following sections will be to obtain results on \textbf{forbidden energies} and \textbf{forbidden extension parameters} for self-adjoint extensions $T_{\theta}\upharpoonright_{\M}$ and after for rank one singular perturbations $A_{\alpha,c}$.

\section{Forbidden Energies}

We extend \cite[Theorem 4]{DON}, classical in the Aronszajn-Donoghue Theory, for the case when $u_{+}$ is not
cyclic. For this, the spectral family $\E^{0}$ of $T_{\theta_{0}}\upharpoonright_{\M}$ is considered. Define the measure $\mu^{0}$ such that $d\mu^{0}(x):=(1+x^{2})d\langle u_{+},\E^{0} (x)u_{+}\rangle$. We denote by $\sigma_{p}$
the set of eigenvalues.

\begin{theorem}\label{teo1.1}
For each $\theta\neq \theta_{0}$,
\begin{equation*}
\sigma_{p}(T_{\theta}\upharpoonright_{\M})=\left\lbrace y\in\R :
\int_{\R}\frac{d\mu^{0}(x)}{(x-y)^{2}}<\infty , F_{\mu^{0}}(y+i0)=cot(\theta - \theta_{0})\right\rbrace.
\end{equation*}
\end{theorem}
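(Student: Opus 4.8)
The plan is to extend Donoghue's argument \cite[Theorem 4]{DON} by locating, for a fixed real $y$, every candidate eigenvector inside the one-dimensional deficiency space $\ker(B^{*}-yI)$ reduced to $\M$, and then deciding for which $\theta$ this candidate satisfies the boundary condition defining $D(T_{\theta})$. I work throughout with the reference extension $T_{\theta_{0}}\upharpoonright_{\M}$ and its cyclic vector $u_{+}$.

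First I would reduce to the deficiency space. If $y\in\sigma_{p}(T_{\theta}\upharpoonright_{\M})$ with eigenvector $\psi\in D(T_{\theta})\cap\M$, then $\psi\in D(B^{*})$ and $B^{*}\psi=T_{\theta}\psi=y\psi$, so $\psi\in\ker(B^{*}-yI)\cap\M$. Passing to the spectral representation of $T_{\theta_{0}}\upharpoonright_{\M}$ on $L^{2}(\R,d\langle u_{+},\E^{0}(\cdot)u_{+}\rangle)$, in which $u_{+}\mapsto\mathbf 1$ and $T_{\theta_{0}}\upharpoonright_{\M}$ is multiplication by $x$, the deficiency vector $h_{z}:=u_{+}+(z-i)(T_{\theta_{0}}\upharpoonright_{\M}-z)^{-1}u_{+}\in\ker(B^{*}-zI)$ corresponds to the function $x\mapsto (x-i)/(x-z)$. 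A direct norm computation gives $\|h_{y}\|^{2}=\int (1+x^{2})(x-y)^{-2}\,d\langle u_{+},\E^{0}u_{+}\rangle=\int (x-y)^{-2}\,d\mu^{0}$, so the boundary vector $h_{y}$ exists in $\M$ exactly when the first condition $\int (x-y)^{-2}d\mu^{0}<\infty$ holds; moreover $\ker(B^{*}-yI)\cap\M$ is then one-dimensional and spanned by $h_{y}$, and is $\{0\}$ otherwise. This already shows that the first condition is necessary and that any eigenvector must be a multiple of $h_{y}$.

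The core step is to decide, assuming $\int (x-y)^{-2}d\mu^{0}<\infty$, when $h_{y}\in D(T_{\theta})$. Since $T_{\theta}$ is self-adjoint, $D(T_{\theta})=\{w\in D(B^{*}):[w,u_{+}+e^{-2i\theta}u_{-}]=0\}$, where $[u,v]:=\langle B^{*}u,v\rangle-\langle u,B^{*}v\rangle$ is the von Neumann boundary form; it vanishes on $D(B)$ and is a nondegenerate symplectic form on the two-dimensional quotient $D(B^{*})/D(B)$, so the one-dimensional Lagrangian $D(T_{\theta})/D(B)$ is its own annihilator. Using $B^{*}u_{\pm}=\pm iu_{\pm}$ and $B^{*}h_{y}=yh_{y}$, a short computation reduces the condition $[h_{y},u_{+}+e^{-2i\theta}u_{-}]=0$ to
\[
(y-i)\langle h_{y},u_{+}\rangle+(y+i)\,e^{-2i\theta}\langle h_{y},u_{-}\rangle=0 .
\]
It then remains to evaluate the two inner products as boundary values. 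Starting from $\langle u_{+},h_{z}\rangle=(F_{\mu^{0}}(z)+i)/(z+i)$ and the analogous formula for $\langle u_{-},h_{z}\rangle$, both obtained from the identity $F_{\mu^{0}}(z)=z+(1+z^{2})\langle u_{+},(T_{\theta_{0}}\upharpoonright_{\M}-z)^{-1}u_{+}\rangle$ (the computation in the proof of Proposition \ref{implicacion} with total mass $1$), and letting $z=y+i\varepsilon\to y$, where the limit $F:=F_{\mu^{0}}(y+i0)$ exists and is real by Proposition \ref{implicacion}, the displayed equation collapses to $e^{-2i(\theta-\theta_{0})}=(F-i)/(F+i)$, which is equivalent to $F_{\mu^{0}}(y+i0)=\cot(\theta-\theta_{0})$. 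Both implications then follow at once: if the two conditions hold, $h_{y}$ is a genuine eigenvector in $D(T_{\theta})\cap\M$, and conversely an eigenvector forces both conditions.

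I expect the main obstacle to be the bookkeeping in this last step, specifically pinning down the phase relating $u_{-}$ to the reference extension $\theta_{0}$ in the spectral representation; it turns out that $u_{-}\mapsto -e^{2i\theta_{0}}(x-i)/(x+i)$, forced by requiring $u_{+}+e^{-2i\theta_{0}}u_{-}\in D(T_{\theta_{0}})$, and it is precisely this phase that converts the linear-fractional expression in $F$ into the clean cotangent (and that makes the case $\theta=\theta_{0}$, excluded in the statement, correspond to the impossible equation $(F-i)/(F+i)=1$). Care is also needed to justify interchanging the boundary limit with the inner products, which is exactly where the hypothesis $\int (x-y)^{-2}d\mu^{0}<\infty$, equivalently the convergence $h_{z}\to h_{y}$ in $\M$, is used.
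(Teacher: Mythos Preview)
Your argument is correct in outline and would yield the theorem, but it takes a different route from the paper. The paper does \emph{not} redo Donoghue's computation; instead it proves a structural lemma and then quotes \cite[Theorem~4]{DON} as a black box. Concretely, the paper observes that $B\upharpoonright_{\M}$ is itself a densely defined closed symmetric operator on $\M$ with deficiency indices $(1,1)$ and cyclic deficiency vector $u_{+}$, so Donoghue's theorem applies verbatim to its self-adjoint extensions $R_{\theta}$. The only work is then to check that $R_{\theta}=T_{\theta}\upharpoonright_{\M}$, which the paper does by showing $K_{\pm}(B)=K_{\pm}(B\upharpoonright_{\M})$ and $B^{*}\upharpoonright_{\M}=(B\upharpoonright_{\M})^{*}$, whence the von Neumann parametrizations match.

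Your approach, by contrast, reproves Donoghue's theorem in situ: you locate the candidate eigenvector $h_{y}$ in the spectral model of $T_{\theta_{0}}\upharpoonright_{\M}$ and read off the boundary condition $[h_{y},u_{+}+e^{-2i\theta}u_{-}]=0$ directly. This is more self-contained and makes the role of the two hypotheses (the $L^{2}$ integrability for existence of $h_{y}$, the Herglotz boundary value for the phase matching) completely transparent, at the cost of the phase bookkeeping you flag---in particular the identification of $u_{-}$ in the model, which the paper avoids entirely by never leaving the abstract von~Neumann picture. The paper's route is shorter and isolates the genuinely new content (that restriction to $\M$ commutes with taking adjoints and extensions); yours is the natural choice if one does not want to rely on \cite{DON}.
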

\begin{proof}
From \cite[Section 4]{GT} one has that $B\upharpoonright_{\M}$ is densely defined closed symmetric with deficiency indices $(1,1)$ on $\H_ {0}$. Let $R_{\theta}$ be its self-adjoint extensions. By definition, $u_{+}$ is cyclic for every $R_{\theta}$. Let $\E'$ be the spectral family of $R_{\theta_{0}}$ and $d\mu'(x):=(1+x^{2})d\langle u_{+},\E' (x)u_{+}\rangle$. By \cite[Theorem 4]{DON},
\begin{equation*}
\sigma_{p}(R_{\theta})=\left\lbrace y\in\R :
\int_{\R}\frac{d\mu'(x)}{(x-y)^{2}}<\infty , F_{\mu'}(y+i0)=cot(\theta - \theta_{0})\right\rbrace .
\end{equation*}
It is required to prove that $R_{\theta}=T_{\theta}\upharpoonright_{\M}$. Let us first note that
\begin{equation*}
K_{\pm}(B)=Ran(B\pm iI)^{\perp}\subseteq Ran(B\upharpoonright_{\M}\pm iI)^{\perp}=K_{\pm}(B\upharpoonright_{\M}).
\end{equation*}
Both $B$ and $B\upharpoonright_{\M}$ have deficiency indices $(1,1)$, therefore
$K_{\pm}(B)=K_{\pm}(B\upharpoonright_{\M})$. Let us show that
$D(B^{*}\upharpoonright_{\M})=D\left[ (B\upharpoonright_{\M})^{*}\right] $.\\

If $x\in D(B^{*}\upharpoonright_{\M})$, then $x=f+a_{+}+a_{-}\in\M$ with $f\in D(B)$ and $a_{\pm}\in K_{\pm}(B)$.
By the above, $a_{\pm}\in K_{\pm}(B\upharpoonright_{\M})$ and since $ K_{\pm}(B\upharpoonright_{\M})\subseteq \M$ one has that $f\in D(B)\cap \M$. Then,
$x\in D\left[ (B\upharpoonright_{\M})^{*}\right]$.\\

If $x\in D\left[ (B\upharpoonright_{\M})^{*}\right]$, then $x=g+b_{+}+b_{-}$ with $g\in D(B\upharpoonright_{\M})=D(B)\cap \M$ and $b_{\pm}\in K_{\pm}(B\upharpoonright_{\M})\subseteq\M$. We conclude that $x\in D(B^{*}\upharpoonright_{\M})$.\\

Thus, $B^{*}\upharpoonright_{\M}=(B\upharpoonright_{\M})^{*}$. Consider, for all $\theta\in\left[ 0,\pi\right)$, unitary operators
\begin{equation*}
V_{\theta}:K_{+}(B)=K_{+}(B\upharpoonright_{\M})\longrightarrow K_{-}(B)=K_{-}(B\upharpoonright_{\M}).\end{equation*}
Then

\begin{align*}
D(R_{\theta})&= D(B\upharpoonright_{\M})\dotplus K_{+}(B\upharpoonright_{\M})\dotplus V_{\theta}\left[  K_{+}(B\upharpoonright_{\M})\right] \\
			&=\left[ D(B)\dotplus K_{+}(B)\dotplus V_{\theta}\left(  K_{+}(B)\right)\right] \cap \M \\
			&= D(T_{\theta}\upharpoonright_{\M}).
\end{align*}

Since in particular $R_{\theta_{0}}=T_{\theta_{0}}\upharpoonright_{\M}$, we have that $\E'=\E^{0}$. Therefore,
$\mu'=\mu^{0}$ and $F_{\mu'}=F_{c}$. In conclusion, we have the desired result.
\end{proof}

We obtain this corollary.

\begin{corollary}\label{corolario00}
Consider $\mu^{0}$ as above. Then
\begin{equation*}
\bigcup_{\theta\in\left[ 0,\pi\right)\setminus\lbrace \theta_{0}\rbrace} \sigma_{p}(T_{\theta}\upharpoonright_{\M})=
\left\lbrace y\in\R:\int_{\R}\frac{d\mu^{0}(x)}{(x-y)^{2}}<\infty\right\rbrace .
\end{equation*}
\end{corollary}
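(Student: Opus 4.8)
The plan is to establish the claimed set equality by double inclusion, using Theorem \ref{teo1.1} for one direction and Proposition \ref{implicacion} together with the surjectivity of the cotangent for the other.

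For the inclusion $\subseteq$, I would argue directly. If $y$ lies in $\sigma_{p}(T_{\theta}\upharpoonright_{\M})$ for some $\theta\in[0,\pi)\setminus\{\theta_{0}\}$, then the characterization in Theorem \ref{teo1.1} immediately yields $\int_{\R}\frac{d\mu^{0}(x)}{(x-y)^{2}}<\infty$, discarding the accompanying boundary-value condition $F_{\mu^{0}}(y+i0)=\cot(\theta-\theta_{0})$. Since this holds for each admissible $\theta$, taking the union over all such $\theta$ preserves the containment, so the left-hand side is contained in the right-hand set.

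For the reverse inclusion $\supseteq$, suppose $y\in\R$ satisfies $\int_{\R}\frac{d\mu^{0}(x)}{(x-y)^{2}}<\infty$. First I would apply Proposition \ref{implicacion} to the measure $\mu^{0}$ to conclude that the boundary value $F_{\mu^{0}}(y+i0)$ exists and is a real number, say $r$. The remaining task is to produce a parameter $\theta\neq\theta_{0}$ with $\cot(\theta-\theta_{0})=r$; once this is done, Theorem \ref{teo1.1} places $y$ in $\sigma_{p}(T_{\theta}\upharpoonright_{\M})$, hence in the union, completing the proof.

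The substantive step is this last selection of $\theta$, which rests on the elementary fact that $\cot$ restricts to a strictly decreasing bijection from $(0,\pi)$ onto $\R$ and is $\pi$-periodic. Given $r\in\R$, I would choose the unique $\psi\in(0,\pi)$ with $\cot\psi=r$ and set $\theta:=\theta_{0}+\psi$ when $\theta_{0}+\psi<\pi$, and $\theta:=\theta_{0}+\psi-\pi$ otherwise. Since $\theta_{0}\in[0,\pi)$ and $\psi\in(0,\pi)$, this value lands in $[0,\pi)$, differs from $\theta_{0}$ because $\psi\notin\{0,\pi\}$, and satisfies $\theta-\theta_{0}\equiv\psi\pmod{\pi}$, so that $\cot(\theta-\theta_{0})=\cot\psi=r$. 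I do not anticipate a genuine obstacle here; the only point requiring care is verifying that the chosen $\theta$ remains in the half-open interval $[0,\pi)$ while staying distinct from $\theta_{0}$, which the case split above handles.
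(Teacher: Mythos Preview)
Your proof is correct and follows essentially the same route as the paper: both directions invoke Theorem~\ref{teo1.1}, and the reverse inclusion combines Proposition~\ref{implicacion} with the fact that $\theta\mapsto\cot(\theta-\theta_{0})$ is a bijection from $[0,\pi)\setminus\{\theta_{0}\}$ onto $\R$. The only difference is that the paper simply asserts this bijectivity, whereas you spell out the explicit inverse via the case split on $\theta_{0}+\psi$.
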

\begin{proof}
By the previous theorem, if $y\in\sigma_{p}(T_{\theta}\upharpoonright_{\M})$, for one $\theta\in\left[ 0,\pi\right)\setminus\lbrace \theta_{0}\rbrace$, then $\int_{\R}\frac{d\mu^{0}(x)}{(x-y)^{2}}<\infty$. On the other hand, suppose
 $\int_{\R}\frac{d\mu^{0}(x)}{(x-y)^{2}}<\infty$. By Proposition \ref{implicacion}, $F_{\mu^{0}}(y+i0)\in\R$.
Furthermore 
\begin{equation*}
h:\left[ 0,\pi\right)\setminus\left\lbrace \theta_{0} \right\rbrace\longrightarrow\R\;where\;h(\theta):=cot(\theta-\theta_{0})
\end{equation*}
is a bijection. Thus, there exists $\theta\in\left[ 0,\pi\right)\setminus\left\lbrace \theta_{0} \right\rbrace $ such that $F_{\mu^{0}}( y+i0)=h(\theta)$. By the previous theorem, $y\in \sigma_{p}(T_{\theta}\upharpoonright_{\M})$.
\end{proof}

According to above, we show the following corollary.

\begin{corollary}\label{prop1}
The set
\begin{equation}\label{prop1set}
\sigma(T_{\theta_{0}}\upharpoonright_{\M})\cap\bigcup_{\theta\in\left[ 0,\pi\right)\setminus\lbrace \theta_{0}\rbrace} \sigma_{p}(T_{\theta}\upharpoonright_{\M})
\end{equation}
is a countable union of closed nowhere dense subsets in $\sigma(T_{\theta_{0}}\upharpoonright_{\M})$
(and therefore in $\R$).
%Let $D=\bigcup_{\theta\in\left[ 0,\pi\right)} \sigma_{p}(T_{\theta})$
\end{corollary}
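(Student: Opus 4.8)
The plan is to reduce the claim to Corollary \ref{teo2.1--} applied to the measure $\mu^{0}$.

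First I would check that $\mu^{0}$ is a positive Borel measure satisfying (\ref{grande}). Writing $d\nu(x):=d\langle u_{+},\E^{0}(x)u_{+}\rangle$ for the spectral measure of $u_{+}$ relative to $T_{\theta_{0}}\upharpoonright_{\M}$, one has $d\mu^{0}(x)=(1+x^{2})d\nu(x)$, so that
\begin{equation*}
\int_{\R}\frac{d\mu^{0}(x)}{1+x^{2}}=\int_{\R}d\nu(x)=\Vert u_{+}\Vert^{2}=1<\infty .
\end{equation*}
Hence Corollaries \ref{corolario00} and \ref{teo2.1--} are available for $\mu^{0}$.

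The key identification is $supp(\mu^{0})=\sigma(T_{\theta_{0}}\upharpoonright_{\M})$. Since $u_{+}$ is cyclic for $T_{\theta_{0}}\upharpoonright_{\M}$ (see the Remark in Section 2), the topological support of its spectral measure $\nu$ coincides with $\sigma(T_{\theta_{0}}\upharpoonright_{\M})$. As $\mu^{0}$ and $\nu$ differ only by the continuous, strictly positive weight $1+x^{2}$, they are mutually equivalent and thus share the same topological support, giving $supp(\mu^{0})=\sigma(T_{\theta_{0}}\upharpoonright_{\M})$. I expect this identification, resting on cyclicity, to be the only genuinely non-bookkeeping point of the argument.

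With these facts in hand the rest is direct. By Corollary \ref{corolario00},
\begin{equation*}
\bigcup_{\theta\in\left[ 0,\pi\right)\setminus\lbrace \theta_{0}\rbrace}\sigma_{p}(T_{\theta}\upharpoonright_{\M})=\left\lbrace y\in\R:\int_{\R}\frac{d\mu^{0}(x)}{(x-y)^{2}}<\infty\right\rbrace ,
\end{equation*}
so that the set (\ref{prop1set}) equals
\begin{equation*}
supp(\mu^{0})\cap\left\lbrace y\in\R:\int_{\R}\frac{d\mu^{0}(x)}{(x-y)^{2}}<\infty\right\rbrace .
\end{equation*}
By Corollary \ref{teo2.1--} this is a countable union of closed nowhere dense subsets of $supp(\mu^{0})=\sigma(T_{\theta_{0}}\upharpoonright_{\M})$. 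Finally, to pass from $\sigma(T_{\theta_{0}}\upharpoonright_{\M})$ to $\R$, note that $\sigma(T_{\theta_{0}}\upharpoonright_{\M})$ is closed in $\R$; hence each such set $F$ is closed in $\R$, and if $U\subseteq F$ were a nonempty open subset of $\R$, then $U=U\cap\sigma(T_{\theta_{0}}\upharpoonright_{\M})$ would be a nonempty relatively open subset contained in $F$, contradicting nowhere density in $\sigma(T_{\theta_{0}}\upharpoonright_{\M})$. Thus each member of the countable family is closed and nowhere dense in $\R$ as well, which completes the argument.
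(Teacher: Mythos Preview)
Your proof is correct and follows the same route as the paper: identify $supp(\mu^{0})$ with $\sigma(T_{\theta_{0}}\upharpoonright_{\M})$ via cyclicity of $u_{+}$, then invoke Corollary~\ref{corolario00} and Corollary~\ref{teo2.1--}. The paper's proof is a one-line citation of these two corollaries plus the support identity, while you have supplied the surrounding verifications (finiteness of $\int d\mu^{0}/(1+x^{2})$, equivalence of $\mu^{0}$ and $\nu$, and the passage from nowhere density in $\sigma(T_{\theta_{0}}\upharpoonright_{\M})$ to nowhere density in $\R$) that the paper leaves implicit.
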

\begin{proof}
Since $supp(\mu^{0})=\sigma(T_{\theta_{0}}\upharpoonright_{\M})$, by Corollary \ref{teo2.1--} and
Corollary \ref{corolario00} one concludes.
\end{proof}

We inmediately conclude the following theorem.

\begin{theorem}\label{zzz}
Let $\theta_{0}$ fixed. Then the set of points in $\sigma(T_{\theta_{0}}\upharpoonright_{\M})$ which are not eigenvalues
for any $T_{\theta}\upharpoonright_{\M}$ with $\theta\neq\theta_{0}$ is dense $\textit{G}_{\delta}$ in
$\sigma(T_{\theta_{0}}\upharpoonright_{\M})$.
\end{theorem}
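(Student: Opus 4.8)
The plan is to recognize the target set as the set-theoretic complement, inside $\sigma(T_{\theta_{0}}\upharpoonright_{\M})$, of the union of point spectra $\bigcup_{\theta\neq\theta_{0}}\sigma_{p}(T_{\theta}\upharpoonright_{\M})$, and then to read off its topological nature directly from Proposition \ref{teo2.1}.

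First I would unwind the statement: a point $y\in\sigma(T_{\theta_{0}}\upharpoonright_{\M})$ fails to be an eigenvalue of $T_{\theta}\upharpoonright_{\M}$ for every $\theta\neq\theta_{0}$ precisely when
\begin{equation*}
y\in\sigma(T_{\theta_{0}}\upharpoonright_{\M})\setminus\bigcup_{\theta\in[0,\pi)\setminus\lbrace\theta_{0}\rbrace}\sigma_{p}(T_{\theta}\upharpoonright_{\M}).
\end{equation*}
By Corollary \ref{corolario00} the union here equals $\lbrace y\in\R:\int_{\R}\frac{d\mu^{0}(x)}{(x-y)^{2}}<\infty\rbrace$, so removing it from $\sigma(T_{\theta_{0}}\upharpoonright_{\M})$ yields exactly
\begin{equation*}
\sigma(T_{\theta_{0}}\upharpoonright_{\M})\cap\left\lbrace y\in\R:\int_{\R}\frac{d\mu^{0}(x)}{(x-y)^{2}}=\infty\right\rbrace.
\end{equation*}

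Next I would invoke the cyclicity of $u_{+}$ for $T_{\theta_{0}}\upharpoonright_{\M}$ (noted in the Remark of Section 2), which gives $supp(\mu^{0})=\sigma(T_{\theta_{0}}\upharpoonright_{\M})$; the displayed set is therefore literally $\lbrace y\in supp(\mu^{0}):\int_{\R}\frac{d\mu^{0}(x)}{(x-y)^{2}}=\infty\rbrace$. Since $d\mu^{0}(x)=(1+x^{2})\,d\langle u_{+},\E^{0}(x)u_{+}\rangle$ and the spectral measure of the unit vector $u_{+}$ is finite of mass $1$, one has $\int_{\R}\frac{d\mu^{0}(x)}{1+x^{2}}=\Vert u_{+}\Vert^{2}=1<\infty$, so $\mu^{0}$ satisfies (\ref{grande}). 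Proposition \ref{teo2.1} then applies verbatim and declares this set dense $\textit{G}_{\delta}$ in $supp(\mu^{0})=\sigma(T_{\theta_{0}}\upharpoonright_{\M})$, which is precisely the assertion.

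There is essentially no hard step here: the theorem is a clean repackaging of Corollary \ref{corolario00}, which converts the union of eigenvalues over $\theta\neq\theta_{0}$ into a single integrability dichotomy for $\mu^{0}$, together with Proposition \ref{teo2.1}, which supplies the density and $\textit{G}_{\delta}$ structure of the divergence set. The only points deserving a line of justification are the identification $supp(\mu^{0})=\sigma(T_{\theta_{0}}\upharpoonright_{\M})$ via cyclicity and the verification that $\mu^{0}$ fulfils (\ref{grande}), both of which are immediate from the definitions.
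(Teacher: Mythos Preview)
Your proof is correct and follows essentially the same approach as the paper: both recognize the target set as the complement in $\sigma(T_{\theta_{0}}\upharpoonright_{\M})$ of $\bigcup_{\theta\neq\theta_{0}}\sigma_{p}(T_{\theta}\upharpoonright_{\M})$, invoke Corollary \ref{corolario00} to rewrite this union via the integrability condition on $\mu^{0}$, and then appeal to Proposition \ref{teo2.1} (the paper routes this last step through Corollary \ref{prop1}, which is just the complementary formulation). Your explicit verification of $supp(\mu^{0})=\sigma(T_{\theta_{0}}\upharpoonright_{\M})$ and of condition (\ref{grande}) is a welcome addition, since the paper states the former without comment in the proof of Corollary \ref{prop1}.
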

\begin{proof}
This follows by the fact that the set of points in $\sigma(T_{\theta_{0}}\upharpoonright_{\M})$ which are not eigenvalues
for any $T_{\theta}\upharpoonright_{\M}$ with $\theta\neq\theta_{0}$ turns out to be the complement in
$\sigma(T_{\theta_{0}}\upharpoonright_{\M})$ of (\ref{prop1set}).
\end{proof}

This result leads to the proof of the first main theorem.

\begin{proof}[Proof of Theorem \ref{maintheorem1}]
By Theorem \ref{ultimate} it turns out that
\begin{align*}
\Xi_{c}&=\left\lbrace x\in\sigma(T_{\frac{\pi}{2}}\upharpoonright_{\M}):x\not\in\sigma_{p}(T_{\theta}\upharpoonright_{\M}),\;for\;any\;\alpha\in\R\setminus\left\lbrace 0\right\rbrace\;where\;\theta =f_{c}(\alpha)\right\rbrace \\
&=\left\lbrace x\in\sigma(T_{\frac{\pi}{2}}\upharpoonright_{\M}):x\not\in\sigma_{p}(T_{\theta}\upharpoonright_{\M}),\;for\;any\;\theta\in\left[ 0,\pi\right)\setminus\left\lbrace\frac{\pi}{2},f_{c}(\infty)\right\rbrace\right\rbrace \\
&\supseteq \left\lbrace x\in\sigma(T_{\frac{\pi}{2}}\upharpoonright_{\M}):x\not\in\sigma_{p}(T_{\theta}\upharpoonright_{\M}),\;for\;any\;\theta\in\left[ 0,\pi\right)\setminus\left\lbrace\frac{\pi}{2}\right\rbrace\right\rbrace .
\end{align*}
where
\begin{equation*}
f_{c}(\alpha):=\frac{1}{2}arg\left[ -\frac{1+\alpha(c-i)}{1+\alpha(c+i)}\right].
\end{equation*}
Therefore Theorem \ref{maintheorem1} follows from Theorem \ref{zzz} as $\theta_{0}=\frac{\pi}{2}$.
\end{proof}

\section{Forbidden Extension Parameters and Couplings}
%------------------------------------------
%------------------------------------------------------
%-------------------------------------------------------------------
%---------------------------------------------------------------------
Let us start proving the next lemma.

\begin{lemma}\label{lem1}
Suppose that $\theta\in\left[ 0,\pi\right)$ and $E\in\R$. If $y\in \left( Ker(T_{\theta}-EI)\setminus\left\lbrace 0\right\rbrace\right) \cap\M$,
then $\langle y,u_{+}\rangle\neq 0$.
\end{lemma}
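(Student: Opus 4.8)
The plan is to prove the statement by contradiction, the whole argument resting on the cyclicity of $u_{+}$ for $T_{\theta}\upharpoonright_{\M}$. Suppose $y\in\M$ satisfies $T_{\theta}y=Ey$ with $y\neq 0$, and assume toward a contradiction that $\langle y,u_{+}\rangle=0$. The goal is to show that this assumption forces $y$ to be orthogonal to the entire cyclicity space $\M$; combined with $y\in\M$ this yields $\langle y,y\rangle=0$, i.e.\ $y=0$, the desired contradiction.

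First I would translate the eigenvalue equation into a statement about the resolvent. Fix any $z\in\C\setminus\R$. Then $\bar z\in\C\setminus\R$ lies in the resolvent set of the self-adjoint operator $T_{\theta}$, and since $E\in\R$ we have $E-\bar z\neq 0$. From $T_{\theta}y=Ey$ it follows that $(T_{\theta}-\bar z I)y=(E-\bar z)y$, and applying the bounded inverse gives $(T_{\theta}-\bar z I)^{-1}y=(E-\bar z)^{-1}y$. Using self-adjointness of $T_{\theta}$, so that $\bigl[(T_{\theta}-zI)^{-1}\bigr]^{*}=(T_{\theta}-\bar z I)^{-1}$, I would then compute
\begin{equation*}
\langle y,(T_{\theta}-zI)^{-1}u_{+}\rangle=\langle (T_{\theta}-\bar z I)^{-1}y,u_{+}\rangle=(E-z)^{-1}\langle y,u_{+}\rangle=0,
\end{equation*}
the last equality by the standing assumption $\langle y,u_{+}\rangle=0$.

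Thus $y$ is orthogonal to $(T_{\theta}-zI)^{-1}u_{+}$ for every $z\in\C\setminus\R$. By the very definition of the cyclicity space, these vectors have closed linear span equal to $\M$; here I would invoke the Remark that $\M$ does not depend on $\theta$, so that the resolvents of the particular $T_{\theta}$ appearing in the hypothesis are indeed total in $\M$. Orthogonality against a total set passes to orthogonality against its closed span, hence $y\perp\M$. Since $y\in\M$ as well, $\langle y,y\rangle=0$, forcing $y=0$ and contradicting $y\neq 0$.

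I do not expect a genuine obstacle: the result is essentially an immediate consequence of cyclicity, and the only points demanding care are the resolvent identity $(T_{\theta}-\bar z I)^{-1}y=(E-\bar z)^{-1}y$, which uses that $\bar z$ is off the real axis and $E$ is real, together with the step from orthogonality against the total family $\{(T_{\theta}-zI)^{-1}u_{+}\}$ to orthogonality against its closure $\M$.
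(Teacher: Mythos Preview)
Your proof is correct and follows essentially the same approach as the paper: assume $\langle y,u_{+}\rangle=0$, use the eigenvalue relation to compute $\langle y,(T_{\theta}-zI)^{-1}u_{+}\rangle=0$ for all $z\in\C\setminus\R$, and conclude $y=0$ from cyclicity of $u_{+}$ in $\M$. The paper's argument is slightly terser but identical in substance.
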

\begin{proof}
Suppose there exists $y\in \left( Ker(T_{\theta}-EI)\setminus\left\lbrace 0\right\rbrace\right) \cap\M$ such that $\langle y,u_{+}\rangle =0$. Given $z\in\C\setminus\R$
\begin{equation*}
(T_{\theta}-\overline{z}I)^{-1}y=(E-\overline{z})^{-1}y.
\end{equation*}
Then
\begin{equation*}
\langle y,(T_{\theta}-zI)^{-1}u_{+}\rangle =
\langle (T_{\theta}-\overline{z}I)^{-1}y,u_{+}\rangle =\langle (E-\overline{z})^{-1}y,u_{+}\rangle =0.
\end{equation*}
Since $u_{+}$ is cyclic for $T_{\theta}\upharpoonright_{\M}$ when $\theta$ is fixed, one concludes $y=0$.
\end{proof}

\begin{definition}
Let $X$ be a Banach space and $X^{*}$ be its dual space. The weak topology is the weakest topology in $X$ such that each functional in $X^{*}$ is continuous. The weak*-topology is the weakest topology in $X^{*}$ such that each functional in $X^{**}$ is continuous.
\end{definition}

\begin{remark}
If $X$ is a Hilbert space the weak topology and weak*-topology coincide. Therefore by Banach-Alaoglu-Bourbaki Theorem
the closed balls in a Hilbert space are compact with respect to the weak topology. 
\end{remark}

Let $\tau :=\left[ 0,\pi \right] \times\R\times\M$ where the Hilbert space $\M$ will be endowed with the weak topology. By the last lemma, we can define the following sets:

\begin{itemize}
\item $\tau_{M}:=\left[ 0,\pi \right]\times\R\times B_{M}\cap\M$ where $B_{M}$ is the closed ball in $\H$ with
center at $0$ and radius $M$
\item $Q_{M}:=\left\lbrace \left(\theta,E,y\right)\in\tau_{M}:y\in Ker(T_{\theta}-EI)\;such\;that\;\langle y,u_{+}\rangle =1 \right\rbrace $.
\end{itemize}

\begin{remark}
The topological space $\tau_{M}$ is metrizable because $B_{M}\cap\M$ is too. It is due to the separability of $\M$. Further $B_{M}\cap\M$ is a convex set in $\M$ so that is strongly, weakly and weakly sequentially
closed in $\M$. Therefore $\tau_{M}$ is a closed subspace of $\tau$.
\end{remark}

We propose the next definition.

\begin{definition}
Let $A:\H\longrightarrow\H$ be an operator. It is said to be weakly closed in $\H$ if given
$\left( x_{n}\right)_{n\in\N}\subseteq D(A)$ such that
\begin{center}
$x_{n}\longrightarrow^{w}x\in\H$ and $Ax_{n}\longrightarrow^{w}y\in\H$
\end{center}
then $x\in D(A)$ and $Ax=y$.
\end{definition}

The proof of the following proposition is like the classical case. Only the fact that the inner product is continuous with respect to weak limits is required .

\begin{prop}\label{debilcerrado}
Let $A:\H\longrightarrow\H$ be a densely defined operator. Then $A^{*}$ is weakly closed on $\H$.
\end{prop}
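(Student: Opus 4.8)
The plan is to prove that $A^*$ is weakly closed by directly verifying the defining condition, exploiting the characterization of the adjoint through the inner product and the continuity of the inner product under weak limits. Recall that for a densely defined operator $A$, an element $x\in\H$ belongs to $D(A^*)$ with $A^*x=y$ precisely when
\begin{equation*}
\langle x,Az\rangle=\langle y,z\rangle\quad\text{for all }z\in D(A).
\end{equation*}
So the entire proof reduces to showing that this identity is preserved under weak limits.

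First I would take a sequence $(x_n)_{n\in\N}\subseteq D(A^*)$ with $x_n\to^w x$ and $A^*x_n\to^w y$, and fix an arbitrary $z\in D(A)$. For each $n$ the adjoint identity gives $\langle x_n,Az\rangle=\langle A^*x_n,z\rangle$. The key observation is that the inner product $\langle\cdot,w\rangle$ with a fixed second argument $w\in\H$ is, by definition of the weak topology on $\H$, a weakly continuous functional; hence weak convergence of the first argument passes through. Applying this with $w=Az$ on the left side and with $w=z$ on the right side, and letting $n\to\infty$, I obtain
\begin{equation*}
\langle x,Az\rangle=\lim_{n\to\infty}\langle x_n,Az\rangle=\lim_{n\to\infty}\langle A^*x_n,z\rangle=\langle y,z\rangle.
\end{equation*}
Since $z\in D(A)$ was arbitrary, the characterization of $D(A^*)$ yields $x\in D(A^*)$ and $A^*x=y$, which is exactly the weak closedness condition.

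There is essentially no serious obstacle here, which is why the statement notes that the proof mirrors the classical case: the classical proof that $A^*$ is (strongly) closed uses continuity of the inner product in the norm topology, and the only adaptation needed is to replace that by the fact that $\langle\cdot,w\rangle$ is continuous along weak limits. The one subtlety worth stating explicitly is that weak convergence $x_n\to^w x$ means precisely $\langle x_n,w\rangle\to\langle x,w\rangle$ for every $w\in\H$ (using the self-duality of the Hilbert space, so that the weak and weak* topologies coincide as noted in the preceding remark), so each of the two limit passages above is legitimate for the particular fixed vectors $Az$ and $z$. No boundedness or uniformity assumption on the sequence is required, since the argument is carried out pointwise in $z$, one test vector at a time.
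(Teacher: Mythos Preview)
Your proof is correct and follows exactly the approach the paper indicates: it mirrors the classical argument that $A^{*}$ is closed, replacing norm continuity of the inner product by the fact that $\langle\cdot,w\rangle$ is continuous along weakly convergent sequences. The paper does not spell out the details beyond that remark, and your proposal fills them in precisely as intended.
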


We prove the following lemma.

\begin{lemma}\label{lem2}
The set $Q_{M}$ is closed in $\tau_{M}$.
\end{lemma}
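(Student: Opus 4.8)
We must show that
$$Q_M=\bigl\{(\theta,E,y)\in\tau_M : y\in\mathrm{Ker}(T_\theta-EI),\ \langle y,u_+\rangle=1\bigr\}$$
is closed in $\tau_M=[0,\pi]\times\mathbb R\times(B_M\cap\mathcal M)$, where $\mathcal M$ carries the weak topology. Since $\tau_M$ is metrizable (by the preceding remark), it suffices to take a sequence $(\theta_n,E_n,y_n)\in Q_M$ converging in $\tau_M$ to some $(\theta,E,y)\in\tau_M$ and prove the limit lies in $Q_M$. Convergence in $\tau_M$ means $\theta_n\to\theta$ and $E_n\to E$ in $\mathbb R$, and $y_n\to^w y$ weakly in $\mathcal M$ with $\|y\|\le M$.

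The plan. First I would dispose of the normalization condition, which is the easy part: since $\langle\cdot,u_+\rangle$ is a bounded linear functional, it is weakly continuous, so from $\langle y_n,u_+\rangle=1$ and $y_n\to^w y$ we get $\langle y,u_+\rangle=1$ immediately; in particular $y\neq 0$. The substance is showing $y\in\mathrm{Ker}(T_\theta-EI)$, i.e. $y\in D(T_\theta)$ and $T_\theta y=Ey$. The natural tool is Proposition \ref{debilcerrado}, which guarantees that the adjoint $\dot A^{*}=B^{*}$ is weakly closed, together with the fact that $T_\theta=B^{*}\!\upharpoonright_{D(T_\theta)}$ is a restriction of $B^{*}$. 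So I would try to realize $T_\theta y=Ey$ as a weak-closedness statement for $B^{*}$.

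The key step. From $y_n\in\mathrm{Ker}(T_{\theta_n}-E_nI)$ we have $B^{*}y_n=T_{\theta_n}y_n=E_n y_n$. Now $E_ny_n\to^w Ey$ weakly, because $E_n\to E$ and $y_n\to^w y$ with the norms bounded by $M$ (for a weakly convergent sequence, $\langle E_ny_n,v\rangle=E_n\langle y_n,v\rangle\to E\langle y,v\rangle$ for every $v\in\mathcal H$). Since $y_n\to^w y$ and $B^{*}y_n=E_ny_n\to^w Ey$, weak closedness of $B^{*}$ gives $y\in D(B^{*})$ and $B^{*}y=Ey$. It remains to upgrade $y\in D(B^{*})$ to $y\in D(T_\theta)$; this is where the dependence on $\theta$ must be handled carefully, since each $y_n$ lies in $D(T_{\theta_n})$ but the $\theta_n$ vary. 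Concretely, decompose $y_n=f_n+c_n(u_++e^{-2i\theta_n}u_-)$ according to \eqref{tesubteta}, with $f_n\in D(B)$ and $c_n\in\mathbb C$. One expects the coefficients $c_n$ to be controlled (e.g. bounded) so that, after passing to a subsequence, $c_n\to c$ and the boundary vectors $u_++e^{-2i\theta_n}u_-\to u_++e^{-2i\theta}u_-$ in norm; combined with $y\in D(B^{*})$ this forces $y=f+c(u_++e^{-2i\theta}u_-)$ with $f\in D(B)$, i.e. $y\in D(T_\theta)$, and then $B^{*}y=Ey$ reads $T_\theta y=Ey$.

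The main obstacle. The delicate point is precisely this last upgrade: controlling the coefficients $c_n$ in the von Neumann decomposition uniformly and identifying the limit as living in $D(T_\theta)$ for the limiting parameter $\theta$, rather than merely in $D(B^{*})=D(\dot A^{*})$. One must rule out that the ``boundary part'' of $y_n$ escapes or lands in a different self-adjoint extension in the limit. Lemma \ref{lem1} and the normalization $\langle y_n,u_+\rangle=1$ should pin $c_n$ down: since $\langle f_n,u_+\rangle$ has a known form and $\langle u_++e^{-2i\theta_n}u_-,u_+\rangle=1$ (as $u_+\perp u_-$ and $\|u_+\|=1$, $u_\pm\in K_\pm(B)$ orthogonal), the normalization should express $c_n$ in terms of quantities that stay bounded as $M$ bounds $\|y_n\|$. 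I would make this explicit to extract convergent coefficients and thereby close the argument, confirming $(\theta,E,y)\in Q_M$ and hence that $Q_M$ is closed in $\tau_M$.
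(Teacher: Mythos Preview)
Your overall strategy coincides with the paper's: metrizability reduces to sequences, weak continuity of $\langle\cdot,u_+\rangle$ gives $\langle y,u_+\rangle=1$, weak closedness of $B^{*}$ (Proposition~\ref{debilcerrado}) yields $y\in D(B^{*})$ with $B^{*}y=Ey$, and the remaining task is to show $y\in D(T_\theta)$ by controlling the coefficients $c_n$ in $y_n=f_n+c_n(u_++e^{-2i\theta_n}u_-)$. You also correctly identify this last step as the crux.

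The gap is in your proposed mechanism for pinning down $c_n$. You write $\langle u_++e^{-2i\theta_n}u_-,u_+\rangle=1$ ``as $u_+\perp u_-$'', but $K_+(B)$ and $K_-(B)$ are \emph{not} orthogonal in $\H$ in general (for the differentiation operator on $H_0^1(0,1)$ the deficiency vectors are $e^{\pm x}$, whose $L^2$ inner product is $1$), nor is $D(B)$ orthogonal to $K_\pm$ in $\H$; hence the normalization $\langle y_n,u_+\rangle=1$ does not isolate $c_n$ as you suggest. The paper's remedy is to pass to the \emph{graph inner product} $\langle\cdot,\cdot\rangle_{B^{*}}:=\langle\cdot,\cdot\rangle+\langle B^{*}\cdot,B^{*}\cdot\rangle$, for which the von Neumann decomposition $D(B^{*})=D(B)\dotplus K_+\dotplus K_-$ \emph{is} orthogonal. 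Writing $y=x+au_++bu_-$, one gets $\langle y_n,u_+\rangle_{B^{*}}=2c_n$ and $\langle y,u_+\rangle_{B^{*}}=2a$ directly; on the other hand $B^{*}y_n=E_ny_n$ and $B^{*}u_+=iu_+$ give $\langle y_n,u_+\rangle_{B^{*}}=(1-iE_n)\langle y_n,u_+\rangle\to(1-iE)\langle y,u_+\rangle=\langle y,u_+\rangle_{B^{*}}$. Thus $c_n\to a$ for the full sequence (no subsequence needed), so $y_n\to^w x+au_++ae^{-2i\theta}u_-$, and uniqueness of weak limits forces $b=ae^{-2i\theta}$, i.e.\ $y\in D(T_\theta)$.
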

\begin{proof}
%Since $B_{M}\cap\M$ is a closed subspace of $\M$, in particular it is a closed convex set and then it is weakly closed. Therefore, it is weakly closed sequentially.
%Thus, $y\in B_{M}\cap\M$
Let $(\theta_{n},E_{n},y_{n})\in Q_{M}$ be a sequence such that it converges to $(\theta ,E, y)\in\tau_{M}$. We are going to show $(\theta ,E, y)\in Q_{M}$. By definition for every $n\in\N$, $y_{n}\in  Ker(T_{\theta_{n}}-E_{n}I)\cap\M$
such that $\langle y_{n},u_{+}\rangle =1$. Since $y_{n}\longrightarrow^{w} y$
one has $\langle y,u_{+}\rangle =1$ and hence $y\neq 0$. Moreover, 
\begin{equation*}
B^{*}y_{n}=T_{\theta_{n}}y_{n}=E_{n}y_{n}\longrightarrow^{w}Ey.
\end{equation*}
By Proposition \ref{debilcerrado}, $y\in D(B^{*})$ and $B^{*}y=Ey$. That is, there exist $x\in D(B)$ and
$a,b\in\C$ such that
$y=x+au_{+}+bu_{-}$. Then, for each $n\in\N$ there exist $x_{n}\in D(B)$ and $c_{n}\in\C$  such that
\begin{equation*}
y_{n}=x_{n}+c_{n}u_{+}+c_{n}e^{-2i\theta_{n}}u_{-}\longrightarrow^{w}y=x+au_{+}+bu_{-}.
\end{equation*}
On the other hand, by using the inner product of the graph of $B^{*}$
\begin{align*}
\langle y_{n},u_{+}\rangle_{B^{*}} :=\langle y_{n},u_{+}\rangle +\langle B^{*}y_{n},B^{*}u_{+}\rangle
									&=\langle y_{n},u_{+}\rangle +\langle E_{n}y_{n},iu_{+}\rangle \\
									&=\langle y_{n},u_{+}\rangle -iE_{n}\langle y_{n},u_{+}\rangle \\
									&\longrightarrow^{w}\langle y,u_{+}\rangle -iE\langle y,u_{+}\rangle\\
									&=\langle y,u_{+}\rangle_{B^{*}}.
\end{align*}
Moreover,
\begin{equation*}
\langle y_{n},u_{+}\rangle_{B^{*}}=\langle x_{n},u_{+}\rangle_{B^{*}}+\langle c_{n}u_{+},u_{+}\rangle_{B^{*}}
					+\langle c_{n}e^{-2i\theta_{n}}u_{-},u_{+}\rangle_{B^{*}}
					=2c_{n}
\end{equation*}
\begin{equation*}
\langle y,u_{+}\rangle_{B^{*}}=\langle x,u_{+}\rangle_{B^{*}}+\langle au_{+},u_{+}\rangle_{B^{*}}
					+\langle bu_{-},u_{+}\rangle_{B^{*}}
					=2a.
\end{equation*}
Therefore $c_{n}\longrightarrow a$. Then 
%\end{equation*}c_{n}u_{+}+c_{n}e^{-2i\theta_{n}}u_{-}\longrightarrow^{w} au_{+}+ae^{-2i\theta}u_{-}.\end{equation*}
%Furthermore $x_{n}\longrightarrow^{w} \widehat{x}\in\H$. So
\begin{equation*}
x_{n}+c_{n}u_{+}+c_{n}e^{-2i\theta_{n}}u_{-}\longrightarrow^{w}x+au_{+}+ae^{-2i\theta}u_{-}.
\end{equation*}
By uniqueness of limits $y=x+au_{+}+ae^{-2i\theta}u_{-}$. Hence
$y\in Ker(T_{\theta}-EI)\setminus\left\lbrace 0\right\rbrace$. Finally, $Q_{M}$ is closed.
\end{proof}

Let us mention the next identity.

\begin{lemma}\label{lem3}
Let $y_{j}=x_{j}+c_{j}e^{i\theta_{j}}u_{+}+c_{j}e^{-i\theta_{j}}u_{-}\in Ker(T_{\theta_{j}}-E_{j}I)$ with $j=1,2$.
Then
\begin{equation*}
-4c_{1}\overline{c_{2}}sen(\theta_{1}-\theta_{2})=(E_{1}-E_{2})\langle y_{1},y_{2}\rangle.
\end{equation*}
\end{lemma}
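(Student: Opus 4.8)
The plan is to evaluate the boundary sesquilinear form $\langle B^{*}y_{1},y_{2}\rangle-\langle y_{1},B^{*}y_{2}\rangle$ in two different ways and to compare the outcomes. First I would record that the decomposition $y_{j}=x_{j}+c_{j}e^{i\theta_{j}}u_{+}+c_{j}e^{-i\theta_{j}}u_{-}$ is precisely the von Neumann decomposition of $y_{j}\in D(B^{*})$: indeed $x_{j}\in D(B)$ and the deficiency part equals $c_{j}e^{i\theta_{j}}(u_{+}+e^{-2i\theta_{j}}u_{-})$, so that $y_{j}\in D(T_{\theta_{j}})$. Since $T_{\theta_{j}}\subseteq B^{*}$ and $T_{\theta_{j}}$ is self-adjoint, the relation $y_{j}\in Ker(T_{\theta_{j}}-E_{j}I)$ gives $B^{*}y_{j}=E_{j}y_{j}$ with $E_{j}\in\R$. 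Using the eigenvalue equations together with the reality of the $E_{j}$, the boundary form collapses to
\begin{equation*}
\langle B^{*}y_{1},y_{2}\rangle-\langle y_{1},B^{*}y_{2}\rangle=\langle E_{1}y_{1},y_{2}\rangle-\langle y_{1},E_{2}y_{2}\rangle=(E_{1}-E_{2})\langle y_{1},y_{2}\rangle,
\end{equation*}
which is exactly the right-hand side of the claimed identity.

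The heart of the argument is the second evaluation, a Lagrange-type boundary identity. Applying $B^{*}$ to the von Neumann decompositions and using $B^{*}u_{+}=iu_{+}$ and $B^{*}u_{-}=-iu_{-}$, I would expand both inner products. The $D(B)$--$D(B)$ contribution vanishes by symmetry of $B$, and each mixed term pairing an element of $D(B)$ with a deficiency vector cancels after moving $B$ across the inner product via the adjoint relation $\langle Bx,u_{\pm}\rangle=\langle x,B^{*}u_{\pm}\rangle$. What survives are only the pure deficiency pairings, leaving
\begin{equation*}
\langle B^{*}y_{1},y_{2}\rangle-\langle y_{1},B^{*}y_{2}\rangle=2i\big(\langle (y_{1})_{+},(y_{2})_{+}\rangle-\langle (y_{1})_{-},(y_{2})_{-}\rangle\big),
\end{equation*}
where $(y_{j})_{\pm}$ denotes the $K_{\pm}(B)$ component of $y_{j}$. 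Substituting $(y_{j})_{+}=c_{j}e^{i\theta_{j}}u_{+}$ and $(y_{j})_{-}=c_{j}e^{-i\theta_{j}}u_{-}$, invoking $\Vert u_{\pm}\Vert=1$, and simplifying with $e^{i\phi}-e^{-i\phi}=2i\,sen(\phi)$ turns the right-hand side into $-4c_{1}\overline{c_{2}}\,sen(\theta_{1}-\theta_{2})$. Equating the two evaluations then gives the statement.

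The step I expect to be the main obstacle is the Lagrange identity itself. Because $u_{+}$ and $u_{-}$ need not be orthogonal, and $D(B)$ is not orthogonal to $K_{\pm}(B)$, the cancellation of the cross terms cannot be read off from orthogonality; it must instead be extracted carefully from the symmetry of $B$ together with $B^{*}u_{\pm}=\pm iu_{\pm}$. Equally delicate is the bookkeeping of complex conjugation in the inner product, since obtaining the conjugate on $c_{2}$ and the correct overall sign of $-4c_{1}\overline{c_{2}}\,sen(\theta_{1}-\theta_{2})$ depends on which slot is conjugate-linear; once the convention is fixed consistently with the computations appearing earlier in this section (for instance in the proof of Lemma \ref{lem2}), the signs fall into place.
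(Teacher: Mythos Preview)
Your argument is correct: evaluating the boundary form $\langle B^{*}y_{1},y_{2}\rangle-\langle y_{1},B^{*}y_{2}\rangle$ via the eigenvalue equations on one side and via the von Neumann decomposition on the other, then matching the results, is exactly the standard way to obtain this Lagrange-type identity, and your bookkeeping of the cross-term cancellations and the sign/conjugation conventions is accurate. Note that the paper merely \emph{states} this lemma without proof (``Let us mention the next identity''), so there is no authors' argument to compare against; your write-up supplies the missing verification.
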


The following result is formulated like in \cite{Gordon}.

\begin{lemma}\label{funcionW}
Let $F\subseteq Q_{M}$ be a compact set. The function $W_{F}:F\times F\longrightarrow\C$ such that
\begin{equation*}
W_{F}\left((\theta_{1},E_{1},y_{1}),(\theta_{2},E_{2},y_{2})\right) :=\langle y_{1},y_{2}\rangle
\end{equation*}
is continuous at least at a pair $(\varepsilon_{0},\varepsilon_{0})\in F\times F$.
\end{lemma}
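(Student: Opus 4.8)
The plan is to exploit the failure of \emph{joint} weak continuity of the inner product by locating one diagonal point where that failure cannot occur, namely a point at which the norms of the third coordinates are forced to converge. The mechanism that produces such a point is a Baire category argument applied to a suitable lower semicontinuous function.

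First I would fix the topological setting. Since $\M$ is separable, $B_{M}\cap\M$ is metrizable in the weak topology, so $\tau_{M}$ and hence its closed subset $Q_{M}$ are metrizable (by Lemma \ref{lem2}). A compact subset $F\subseteq Q_{M}$ is therefore a compact metric space, in particular a complete metric space and so a Baire space; this is what allows me to reason simultaneously with sequences and with Baire category. Next I would introduce the diagonal function $g:F\longrightarrow\R$ defined by $g(\theta,E,y):=\langle y,y\rangle=\Vert y\Vert^{2}$. As every admissible third coordinate lies in $B_{M}$, the function $g$ is bounded, $0\leqslant g\leqslant M^{2}$. Its key property is weak lower semicontinuity: if $(\theta_{n},E_{n},y_{n})\to(\theta,E,y)$ in $F$, then $y_{n}\rightharpoonup y$ weakly in $\M$, and weak lower semicontinuity of the Hilbert space norm gives $\Vert y\Vert\leqslant\liminf_{n}\Vert y_{n}\Vert$, i.e. $g(\varepsilon)\leqslant\liminf_{n}g(\varepsilon_{n})$.

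I would then invoke the classical fact that a bounded lower semicontinuous function on a Baire space is of Baire class one (it is an increasing pointwise limit of the Lipschitz functions $\varepsilon\mapsto\inf_{\eta}[\,g(\eta)+n\,d(\varepsilon,\eta)\,]$) and hence is continuous on a dense $\textit{G}_{\delta}$ subset of $F$. In particular this set of continuity points is nonempty, so I may fix $\varepsilon_{0}=(\theta_{0},E_{0},y_{0})\in F$ at which $g$ is continuous; this is the candidate diagonal point. To finish, I would check continuity of $W_{F}$ at $(\varepsilon_{0},\varepsilon_{0})$. Since $F\times F$ is metrizable it suffices to test sequences: let $\varepsilon^{(1)}_{n},\varepsilon^{(2)}_{n}\to\varepsilon_{0}$ in $F$ with third coordinates $y^{(1)}_{n},y^{(2)}_{n}$. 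Weak convergence gives $y^{(j)}_{n}\rightharpoonup y_{0}$, while continuity of $g$ at $\varepsilon_{0}$ gives $\Vert y^{(j)}_{n}\Vert\to\Vert y_{0}\Vert$. In a Hilbert space these two facts together force strong convergence, since $\Vert y^{(j)}_{n}-y_{0}\Vert^{2}=\Vert y^{(j)}_{n}\Vert^{2}-2\,\mathrm{Re}\,\langle y^{(j)}_{n},y_{0}\rangle+\Vert y_{0}\Vert^{2}\to 0$. The inner product is jointly continuous along strongly convergent sequences, whence $\langle y^{(1)}_{n},y^{(2)}_{n}\rangle\to\langle y_{0},y_{0}\rangle=W_{F}(\varepsilon_{0},\varepsilon_{0})$, which is precisely continuity at the diagonal point.

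The main obstacle is conceptual rather than computational: because the inner product is only separately, not jointly, weakly continuous, a generic diagonal point need not be a continuity point of $W_{F}$, and indeed no explicit point can be exhibited. The whole argument hinges on producing a single good point, and the one nonroutine input is the semicontinuity/Baire-category theorem that guarantees a continuity point of $g$; everything else is the standard upgrade from weak to strong convergence via convergence of norms.
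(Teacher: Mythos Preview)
Your proof is correct and follows essentially the same architecture as the paper's: both show that the function $\varepsilon\mapsto\Vert y\Vert^{2}$ on the compact metric space $F$ is of Baire class one (the paper via truncated Parseval sums $J_{m}$, you via Lipschitz inf-convolutions of the weakly lower semicontinuous norm), invoke the theorem that Baire class one functions have continuity points, and then finish by upgrading weak convergence plus norm convergence to strong convergence. The only difference is the choice of continuous approximants, which is a cosmetic variation.
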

\begin{proof}
Let $\left\lbrace e_{n}\right\rbrace_{n\in\N} $ be a orthonormal basis of $\M$. We define
$J_{m},J_{F}:F\longrightarrow\R$  with $m\in\N$ and $\varepsilon=(\theta,E,y)$ by
\begin{equation*}
J_{m}(\varepsilon):=\sum_{n=1}^{m}\vert\langle y, e_{n}\rangle\vert^{2}
\end{equation*}
and
\begin{equation*}
J_{F}(\varepsilon):=\Vert y\Vert^{2}.
\end{equation*}
Due to Parseval's identity, it turns out that for each $\varepsilon\in F$
\begin{equation*}
J_{F}(\varepsilon)=\sum_{n=1}^{\infty}\vert\langle y,e_{n}\rangle\vert^{2}
=\lim_{m\rightarrow\infty}J_{m}(\varepsilon).
\end{equation*}
Let $\varepsilon_{k}=(\theta_{k},E_{k},y_{k})\in F$ be a sequence that converges to $\varepsilon\in F$. Then,
$y_{k}\longrightarrow^{w}y$. Hence,
\begin{equation*}
\lim_{k\rightarrow\infty}J_{m}(\varepsilon_{k})
=\sum_{n=1}^{m}\vert\langle y,e_{n}\rangle\vert^{2}=J_{m}(\varepsilon).
\end{equation*}
Therefore, $J_{F}$ is pointwise limit of continuous functions. By \cite[Theorem 7.3]{Oxtoby}, there is some
$\varepsilon_{0}\in F$ such that $J_{F}$ is continuous at $\varepsilon_{0}=(\theta_{c},E_{c},y_{0})$.\\

We assert that $W_{F}$ is
continuous at $(\varepsilon_{0},\varepsilon_{0})$. If $\varepsilon_{n}=(\theta_{n},E_{n},y_{n})\longrightarrow\varepsilon_{0}$
and $\varepsilon'_{n}=(\theta'_{n},E'_{n},y'_{n})\longrightarrow\varepsilon_{0}$, then
\begin{equation*}
y_{n},y'_{n}\longrightarrow^{w}y_{0}
\end{equation*}
and by continuity of $J_{F}$ at $\varepsilon_{0}$,
\begin{equation*}
\Vert y_{n}\Vert,\Vert y'_{n}\Vert\longrightarrow \Vert y_{0}\Vert.
\end{equation*}
So, $y_{n},y'_{n}\longrightarrow y_{0}$.
Thus, 
\begin{equation*}
W_{F}(\varepsilon_{n},\varepsilon'_{n})=\langle y_{n},y'_{n}\rangle
\longrightarrow\langle y_{0}, y_{0}\rangle=W_{F}(\varepsilon_{0},\varepsilon_{0}).
\end{equation*}
Finally, $W_{F}$ is continuous at $(\varepsilon_{0},\varepsilon_{0})$.
\end{proof}

Let $\P:\tau\longrightarrow\R$, $\Pi:\tau\longrightarrow\left[ 0,\pi\right]$, $\RR:\tau\longrightarrow\left[ 0,\pi\right]\times\R$, $p:\left[ 0,\pi\right]\times\R\longrightarrow\R$ and $q:\left[ 0,\pi\right]\times\R\longrightarrow\left[ 0,\pi\right]$ be projections. We follow the same procedure that is used in the proof of \cite[Proposition 2*]{Gordon}.

\begin{prop}\label{prop2*}
If $F\subseteq Q_{M}$ is compact and $\P(F)$ is nowhere dense if and only if $\Pi(F)$ is nowhere dense.
\end{prop}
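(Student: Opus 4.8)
The plan is to prove the sharper statement that for every compact $F\subseteq Q_M$ the sets $\P(F)$ and $\Pi(F)$ have empty interior \emph{simultaneously}; since both are continuous images of the compact set $F$ they are closed, so for them ``nowhere dense'' is equivalent to ``empty interior''. The engine of the proof is the \emph{universal} identity of Lemma \ref{lem3}: for two eigentriples $y_j=x_j+c_je^{i\theta_j}u_{+}+c_je^{-i\theta_j}u_{-}\in Ker(T_{\theta_j}-E_jI)$ one has $(E_1-E_2)\langle y_1,y_2\rangle=-4c_1\overline{c_2}\,sen(\theta_1-\theta_2)$, valid for all pairs. Whenever $\langle y_1,y_2\rangle$ and $c_1\overline{c_2}$ are pinched between positive constants, this forces $|E_1-E_2|$ and $|\theta_1-\theta_2|$ to be comparable, which is exactly what transfers nowhere-density between the two projections.

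First I would produce such control locally. Applying Lemma \ref{funcionW} to a compact $F'\subseteq F$ gives $\varepsilon_0=(\theta_0,E_0,y_0)\in F'$ at which $W_{F'}$, hence $J_{F'}(\varepsilon)=\|y\|^2$, is continuous. Continuity of the norm together with the weak convergence built into $\tau_M$ upgrades to \emph{strong} convergence $y_k\to y_0$ along any sequence in $F'$ tending to $\varepsilon_0$; arguing with the graph inner product as in Lemma \ref{lem2} then yields $c_k\to c_0$. Now $\|y_0\|^2>0$ because $\langle y_0,u_{+}\rangle=1$ by the definition of $Q_M$ (Lemma \ref{lem1}), and $c_0\neq 0$: if $c_0=0$ then $y_0\in D(B)$ with $By_0=E_0y_0$, and pairing with $u_{+}\in Ker(B^{*}-iI)$ gives $(E_0+i)\langle y_0,u_{+}\rangle=0$, forcing $\langle y_0,u_{+}\rangle=0$, a contradiction. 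Hence on a suitable relatively open neighborhood $U\subseteq F'$ of $\varepsilon_0$ the quantities $|\langle y_1,y_2\rangle|$ and $|c_1\overline{c_2}|$ stay between fixed positive bounds and the $\theta$'s remain in a small arc, so Lemma \ref{lem3} yields $\kappa_1|\theta_1-\theta_2|\le|E_1-E_2|\le\kappa_2|\theta_1-\theta_2|$ for $\varepsilon_1,\varepsilon_2\in U$. These two inequalities say that on $U$ the coordinate $E$ is a bi-Lipschitz function of $\theta$ and conversely, so $q$ and $p$ induce a bi-Lipschitz bijection $\Pi(U)\to\P(U)$; it extends to the closures and carries nondegenerate intervals to connected compacta in both directions, whence $\P(U)$ has empty interior if and only if $\Pi(U)$ does.

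The main obstacle is passing from this single good neighborhood to all of $F$, since Lemma \ref{funcionW} supplies only one continuity point per compact set. I would resolve it by a Cantor--Bendixson exhaustion: put $F_0=F$ and, given compact nonempty $F_\alpha$, choose a bi-Lipschitz neighborhood $U_\alpha\subseteq F_\alpha$ as above and set $F_{\alpha+1}=F_\alpha\setminus U_\alpha$ (relatively closed in $F_\alpha$, hence compact), taking intersections at limit ordinals. Because $\tau_M$ is separable and metrizable, this strictly decreasing transfinite chain of closed sets terminates after countably many steps with $F_\lambda=\varnothing$, producing a countable partition $F=\bigsqcup_{\alpha<\lambda}U_\alpha$ into bi-Lipschitz pieces. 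This is the step where the delicate input (a single point of continuity, the strong-convergence upgrade, and $c_0\neq 0$) is leveraged into a global decomposition.

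Finally I assemble the conclusion. Suppose $\Pi(F)$ is nowhere dense. Each $\Pi(U_\alpha)\subseteq\Pi(F)$ is then nowhere dense, so by the bi-Lipschitz equivalence each $\P(U_\alpha)$ is nowhere dense, whence $\P(F)=\bigcup_{\alpha<\lambda}\P(U_\alpha)$ is meager; being also compact, hence closed, the Baire category theorem forces $\P(F)$ to have empty interior, i.e.\ to be nowhere dense. The reverse implication is identical after interchanging $p$ and $q$, the comparison $|E_1-E_2|\asymp|\theta_1-\theta_2|$ being symmetric in the two coordinates. This establishes the stated equivalence.
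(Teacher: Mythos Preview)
Your argument is correct and follows a genuinely different path from the paper's. Both proofs start from Lemma~\ref{funcionW} to locate a point $\varepsilon_0$ at which $\langle y_1,y_2\rangle$ is continuous, and then invoke Lemma~\ref{lem3} to tie $E_1-E_2$ to $\theta_1-\theta_2$. The paper, however, extracts from this only the \emph{injectivity} of $p$ and $q$ on $\RR(\FF\cap\overline U)$ (the implication $\theta_1=\theta_2\Rightarrow E_1=E_2$ via $\langle y_1,y_2\rangle\neq0$, the converse via the Aronszajn--Donoghue statement Theorem~\ref{teo1.1}), upgrades these continuous injections to homeomorphisms by compactness, and globalizes by Zorn's lemma: a \emph{minimal} compact $\FF\subseteq F$ with $\Pi(\FF)\supset I$ is produced, and the local homeomorphism forces the strictly smaller set $\FF\setminus U$ still to satisfy $\Pi(\FF\setminus U)\supset I$, a contradiction. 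You instead push Lemma~\ref{lem3} to a quantitative bi-Lipschitz bound $|E_1-E_2|\asymp|\theta_1-\theta_2|$ (this needs $c_0\neq0$, which is in fact automatic: the graph-inner-product computation in the proof of Lemma~\ref{lem2} gives $2c=(1-iE)\langle y,u_+\rangle=1-iE$), and globalize by a Cantor--Bendixson peeling of $F$ into countably many bi-Lipschitz patches, closing with the Baire category theorem. Your route is manifestly symmetric in the two projections and dispenses with the external input of Theorem~\ref{teo1.1}; the paper's route is shorter once Zorn and Theorem~\ref{teo1.1} are in hand, and needs neither the bi-Lipschitz bookkeeping nor the exhaustion.
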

\begin{proof}
Suppose that there is a compact subset $F\subseteq Q_{M}$ such that $\P(F)$ is nowhere dense and $\Pi(F)$ contains a non-empty open set of $\left[ 0,\pi \right]$ which one denotes as $I$. Consider the partially ordered set given by the collection
\begin{equation*}
X:=\left\lbrace F'\subseteq F: F' \;is\;compact\;set\;in\;Q_{M}\;and\;\Pi(F')\supset I\right\rbrace.
\end{equation*}
We assert that $X$ has a minimal element. Let $\left\lbrace F_{\alpha}\right\rbrace_{\alpha\in\Delta} $ be
a chain in $X$ where $\Delta$ is an arbitrary set of indices. It turns out that
$\bigcap_{\alpha\in\Delta}F_{\alpha}$ is a compact subset of $F$ in $Q_{M}$. We assert that
$\Pi\left( \bigcap_{\alpha\in\Delta}F_{\alpha}\right)\supset I $. For all $\alpha\in\Delta$, there exists
$G_{\alpha}\subseteq\R\times\M$ such that $F_{\alpha}=\Pi(F_{\alpha})\times G_{\alpha}$. The inclusion follows
from the following fact
\begin{equation*}
\bigcap_{\alpha\in\Delta}F_{\alpha}\supseteq\bigcap_{\alpha\in\Delta}\Pi(F_{\alpha})\times\bigcap_{\alpha\in\Delta}G_{\alpha}.
\end{equation*}
Therefore, $\bigcap_{\alpha\in\Delta}F_{\alpha}$ is a lower bound of
$\left\lbrace F_{\alpha}\right\rbrace_{\alpha\in\Delta} $. We conclude by Zorn's lemma. Denote the minimal set by $\FF$.\\

Now we want to prove that there is some subset of $\FF$ such that its projections with respect to $\P$ and $\Pi$ are homeomorphic.
By Lemma \ref{funcionW}, there exists $\delta >0$ such that for all $\varepsilon,\varepsilon'\in \FF$
\begin{equation}\label{qqq}
d_{\FF\times \FF}\left( (\varepsilon,\varepsilon'),(\varepsilon_{0},\varepsilon_{0})\right)<\delta
\Rightarrow\left\vert W_{\FF}(\varepsilon,\varepsilon')-W_{\FF}(\varepsilon_{0},\varepsilon_{0})\right\vert <
\dfrac{\Vert y_{0}\Vert^{2}}{2}
\end{equation}
where $d_{\FF\times \FF}$ is the metric of $\FF\times \FF$.\\

Let $U$ be the ball in $\tau_{M}$ defined as
\begin{equation*}
U:=\left\lbrace\varepsilon\in \tau_{M} :d_{M}\left( \varepsilon,\varepsilon_{0}\right)<\frac{\delta}{3}\right\rbrace 
\end{equation*}
%with center $\varepsilon_{0}$ and radius $\frac{\delta}{3}$.
where $d_{M}$ denotes the metric on $\tau_{M}$. We are going to show that $p\vert_{\RR(\FF\cap\overline{U})}$ and $q\vert_{\RR(\FF\cap\overline{U})}$ are injective. That is,
for all $\varepsilon_{1}=(\theta_{1},E_{1},y_{1}),\varepsilon_{2}=(\theta_{2},E_{2},y_{2})\in \FF\cap\overline{U}$, it is sufficient to prove that $\theta_{1}=\theta_{2}$ if and only if $E_{1}=E_{2}$.
\begin{itemize}
\item $(\Rightarrow)$ Suppose that $\theta_{1}=\theta_{2}$. By Lemma \ref{lem3}
\begin{equation}\label{0}
(E_{1}-E_{2})\langle y_{1},y_{2}\rangle=0.
\end{equation}
Since $\varepsilon_{1},\varepsilon_{2}\in \overline{U}$,
\begin{equation*}
d_{\FF\times \FF}\left( (\varepsilon_{1},\varepsilon_{2}),(\varepsilon_{0},\varepsilon_{0})\right)=
d_{M}\left( \varepsilon_{1},\varepsilon_{0}\right)+d_{M}\left( \varepsilon_{2},\varepsilon_{0}\right)<\delta .
\end{equation*}
By (\ref{qqq}), 
\begin{equation*}
\left\vert\langle y_{1},y_{2}\rangle-\Vert y_{0}\Vert^{2}\right\vert
=\left\vert W_{\FF}(\varepsilon_{1},\varepsilon_{2})-W_{\FF}(\varepsilon_{0},\varepsilon_{0})\right\vert
<\dfrac{\Vert y_{0}\Vert^{2}}{2}.
\end{equation*}
Thus, $\langle y_{1},y_{2}\rangle\neq 0$ implies $E_{1}=E_{2}$.

\item $(\Leftarrow)$ Suppose that $E_{1}=E_{2}$. If $\theta_{1}\neq\theta_{2}$, then the operators
$T_{\theta_{1}}\upharpoonright_{\M}$ and $T_{\theta_{2}}\upharpoonright_{\M}$ have
an eigenvalue in common, but that contradicts to Theorem \ref{teo1.1}.
\end{itemize}
Next, $p\vert_{\RR(\FF\cap\overline{U})}$ and $q\vert_{\RR(\FF\cap\overline{U})}$ are injective and therefore
homeomorphisms. Then 
\begin{center}
$\P(\FF\cap\overline{U})=p\left[ \RR(\FF\cap\overline{U})\right]$ and $\Pi(\FF\cap\overline{U})=q\left[ \RR(\FF\cap\overline{U})\right] $
\end{center}
are homeomorphic to $\RR(\FF\cap\overline{U})$ and therefore between them.\\

On the other hand, $\Pi(\FF)= \Pi(\FF\cap U)\cup \Pi(\FF\setminus U)$ where $\Pi(\FF)$ contains to $I$. Since
$\P(\FF\cap\overline{U})\subseteq \P(\FF)$, by hypothesis it is nowhere dense in $\R$. But
$\Pi(\FF\cap\overline{U})$ is nowhere dense in $\left[ 0,\pi \right]$ and $\Pi(\FF\cap U)$ inherits that property. Then $I$ is not
contained in $\Pi(\FF\cap U)$. Hence $I\subseteq \Pi(\FF\setminus U)$. Moreover, $\FF\setminus U$ is properly contained in $\FF$ and is compact in $Q_{M}$. But this contradicts the minimality of $\FF$. Consequently,
$\Pi(F)$ is nowhere dense in $\left[ 0,\pi \right]$. The other direction is analogous.
\end{proof}

Finally we arrive at the following proposition.

\begin{prop}\label{prop2}
Let $Y$ be a countable union of closed nowhere dense subsets in $\R$. Then
\begin{equation*}
\left\lbrace \theta\in\left[ 0,\pi\right)\mid\sigma_{p}(T_{\theta}\upharpoonright_{\M})\cap Y\neq\varnothing\right\rbrace 
\end{equation*}
is a countable union of closed nowhere dense subsets in $\left[ 0,\pi\right]$.
\end{prop}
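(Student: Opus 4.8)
The plan is to slice the energy set $Y$ into countably many compact nowhere dense pieces, lift each piece to a compact subset of $Q_{M}$, and then transport nowhere-density from the energy axis to the $\theta$-axis by means of Proposition \ref{prop2*}. First, write $Y=\bigcup_{k\in\N}Y_{k}$ with each $Y_{k}$ closed and nowhere dense in $\R$, and split further $Y_{k}=\bigcup_{n\in\N}\left( Y_{k}\cap\left[ -n,n\right]\right)$. After reindexing we obtain a countable family $\left\lbrace Z_{j}\right\rbrace_{j\in\N}$ of \emph{compact} nowhere dense subsets of $\R$ with $Y=\bigcup_{j\in\N}Z_{j}$. This replacement by compact pieces is precisely what will allow us to compactify along the otherwise unbounded energy direction.

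For each pair $(j,M)\in\N\times\N$ consider the slice
\begin{equation*}
F_{j,M}:=Q_{M}\cap\P^{-1}(Z_{j})=\left\lbrace (\theta,E,y)\in Q_{M}:E\in Z_{j}\right\rbrace.
\end{equation*}
I would first verify that $F_{j,M}$ is compact. It is contained in $\left[ 0,\pi\right]\times Z_{j}\times\left( B_{M}\cap\M\right)$, where $\left[ 0,\pi\right]$ and $Z_{j}$ are compact and $B_{M}\cap\M$ is compact in the weak topology by the Banach--Alaoglu--Bourbaki theorem; hence this product is compact. Since $Q_{M}$ is closed in $\tau_{M}$ by Lemma \ref{lem2} and $\P^{-1}(Z_{j})$ is closed because $\P$ is continuous and $Z_{j}$ is closed, the set $F_{j,M}$ is a closed subset of a compact set and therefore compact. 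Moreover $\P(F_{j,M})\subseteq Z_{j}$ is contained in a nowhere dense set, so it is nowhere dense; Proposition \ref{prop2*} then yields that $\Pi(F_{j,M})$ is nowhere dense in $\left[ 0,\pi\right]$, and being the continuous image of a compact set it is also closed.

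It remains to identify the target set with $\bigcup_{j,M}\Pi(F_{j,M})$. If $\theta\in\left[ 0,\pi\right)$ satisfies $\sigma_{p}(T_{\theta}\upharpoonright_{\M})\cap Y\neq\varnothing$, choose $E$ in this intersection; then $E\in Z_{j}$ for some $j$ and there is an eigenvector $y\in\left( Ker(T_{\theta}-EI)\setminus\left\lbrace 0\right\rbrace\right)\cap\M$, which by Lemma \ref{lem1} can be normalized so that $\langle y,u_{+}\rangle=1$. Picking $M\in\N$ with $M\geqslant\Vert y\Vert$ places $(\theta,E,y)\in F_{j,M}$, so $\theta\in\Pi(F_{j,M})$. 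Conversely, any $\theta\in\left[ 0,\pi\right)$ lying in some $\Pi(F_{j,M})$ comes from a triple whose third coordinate is a nonzero eigenvector of $T_{\theta}\upharpoonright_{\M}$ for an eigenvalue $E\in Z_{j}\subseteq Y$. Hence the set in question coincides, as a subset of $\left[ 0,\pi\right)$, with $\bigcup_{j,M}\Pi(F_{j,M})$, exhibiting it as a countable union of closed nowhere dense subsets of $\left[ 0,\pi\right]$; the single endpoint $\theta=\pi$ causes no trouble since $T_{\pi}=T_{0}$. The main obstacle is the compactness of $F_{j,M}$: because $\R$ is noncompact and $\M$ carries only the weak topology, the argument genuinely relies on the reduction to compact $Z_{j}$ together with Banach--Alaoglu, which jointly guarantee that a weak limit of normalized eigenvectors remains a normalized eigenvector and thus stays inside $Q_{M}$.
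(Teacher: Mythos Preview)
Your proof is correct and follows essentially the same route as the paper's own proof: decompose $Y$ into countably many closed nowhere dense pieces, intersect the preimage of each piece with an $M$-truncation of $Q$, verify compactness via Lemma \ref{lem2} and Banach--Alaoglu, and then apply Proposition \ref{prop2*} to transfer nowhere-density from the $E$-axis to the $\theta$-axis. The only cosmetic difference is that you compactify the energy pieces beforehand (replacing $Y_{k}$ by $Y_{k}\cap[-n,n]$), whereas the paper leaves the $Y_{n}$ as they are and instead builds the truncation $E\in[-M,M]$ directly into its compact sets $Q^{(M)}:=Q_{M}\cap\left([-M,M]\times[-M,M]\times B_{M}\cap\M\right)$; the two devices achieve exactly the same compactness.
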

\begin{proof}
It turns out that
\begin{equation*}
\left\lbrace \theta\in\left[ 0,\pi\right)\mid\sigma_{p}(T_{\theta}\upharpoonright_{\M})\cap Y\neq\varnothing\right\rbrace
=\Pi(Q\cap \P^{-1}(Y))
\end{equation*}
where $Q:=\left\lbrace (\theta,E,y)\in\tau:y\in Ker(T_{\theta}-EI)\;and\;\langle y,u_{+}\rangle=1\right\rbrace$.\\

By hypothesis, there is a sequence $\left\lbrace Y_{n}\right\rbrace _{n\in\N} $ of closed nowhere dense subsets
in $\R$ such that $Y= \bigcup_{n\in\N}Y_{n}$. For all $M\in\N$ we define
\begin{center}
$Q^{(M)}:=Q_{M}\cap  \left( \left[ -M,M\right]\times\left[-M,M\right]\times B_{M}\cap\M\right)  $ 
\end{center}
which is closed in $\tau_{M}$ by Lemma \ref{lem2}. Further, due to Tychonoff Theorem and Banach-Alaoglu-Bourbaki Theorem,
$Q^{(M)}$ is compact in $\tau_{M}$. It is true that $Q=\bigcup_{M\in\N}Q^{(M)}$. Then,
\begin{equation}\label{proyecciones}
\Pi(Q\cap \P^{-1}(Y))=\bigcup_{M,n\in\N}\Pi\left( Q^{(M)}\bigcap \P^{-1}\left( Y_{n}\right) \right).
\end{equation}
Since $Q^{(M)}\bigcap \P^{-1}\left( Y_{n}\right)$ is closed and contained in a compact, it inherits compactness. Furthermore, $\P\left( Q^{(M)}\bigcap \P^{-1}\left( Y_{n}\right)\right)=\P\left( Q^{(M)} \right) \bigcap Y_{n} $ is contained in a nowhere dense set, thus, it inherits that property. According to the last proposition, $\Pi\left( Q^{(M)}\bigcap \P^{-1}\left( Y_{n}\right) \right)$ is nowhere dense. In addition, it is compact by continuity of $\Pi$. But compact implies closed in $\R^{2}$.
\end{proof}

With the above we can prove our theorem on \textbf{forbidden extension parameters}.

\begin{theorem}\label{TEO3}
Let $\theta_{0}$ fixed. Then
\begin{equation}\label{setprincipal}
\left\lbrace \theta\in\left[ 0,\pi\right]\mid\sigma_{p}(T_{\theta}\upharpoonright_{\M})\cap\sigma(T_{\theta_{0}}\upharpoonright_{\M})=\varnothing\right\rbrace 
\end{equation}

is dense $\textit{G}_{\delta}$ in $\left[ 0,\pi\right]$.
\end{theorem}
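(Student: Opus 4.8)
The plan is to exhibit the complement in $[0,\pi]$ of the set (\ref{setprincipal}) as a countable union of closed nowhere dense sets, and then to invoke the Baire Category Theorem. Write
\begin{equation*}
B := \left\lbrace \theta\in[0,\pi] : \sigma_{p}(T_{\theta}\upharpoonright_{\M})\cap\sigma(T_{\theta_{0}}\upharpoonright_{\M})\neq\varnothing \right\rbrace
\end{equation*}
for this complement, so that the set (\ref{setprincipal}) equals $[0,\pi]\setminus B$.

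First I would introduce the set supplied by Corollary \ref{prop1}, namely
\begin{equation*}
Y := \sigma(T_{\theta_{0}}\upharpoonright_{\M})\cap\bigcup_{\theta\in[0,\pi)\setminus\left\lbrace\theta_{0}\right\rbrace}\sigma_{p}(T_{\theta}\upharpoonright_{\M}),
\end{equation*}
which by that corollary is a countable union of closed nowhere dense subsets of $\R$ and which by construction satisfies $Y\subseteq\sigma(T_{\theta_{0}}\upharpoonright_{\M})$. The key elementary observation is that for every $\theta\in[0,\pi)$ with $\theta\neq\theta_{0}$ one has $\sigma_{p}(T_{\theta}\upharpoonright_{\M})\subseteq\bigcup_{\theta'\neq\theta_{0}}\sigma_{p}(T_{\theta'}\upharpoonright_{\M})$, whence $\sigma_{p}(T_{\theta}\upharpoonright_{\M})\cap\sigma(T_{\theta_{0}}\upharpoonright_{\M})=\sigma_{p}(T_{\theta}\upharpoonright_{\M})\cap Y$. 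Setting $P:=\left\lbrace\theta\in[0,\pi) : \sigma_{p}(T_{\theta}\upharpoonright_{\M})\cap Y\neq\varnothing\right\rbrace$, I would then verify the sandwich $P\subseteq B\subseteq P\cup\left\lbrace\theta_{0},\pi\right\rbrace$: the left inclusion is immediate from $Y\subseteq\sigma(T_{\theta_{0}}\upharpoonright_{\M})$, while the right inclusion follows because any $\theta\in B\setminus\left\lbrace\theta_{0},\pi\right\rbrace$ lies in $[0,\pi)\setminus\left\lbrace\theta_{0}\right\rbrace$, where the displayed identity forces $\sigma_{p}(T_{\theta}\upharpoonright_{\M})\cap Y\neq\varnothing$.

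With the sandwich in hand, Proposition \ref{prop2} applied to $Y$ shows that $P$ is a countable union of closed nowhere dense subsets of $[0,\pi]$; adjoining the two-point set $\left\lbrace\theta_{0},\pi\right\rbrace$, which is closed and nowhere dense since $[0,\pi]$ has no isolated points, preserves this property. Hence $B=P\cup\left(B\cap\left\lbrace\theta_{0},\pi\right\rbrace\right)$ is again a countable union of closed nowhere dense subsets of $[0,\pi]$. Its complement $[0,\pi]\setminus B$ is therefore a countable intersection of open dense sets, i.e. $\textit{G}_{\delta}$, and since $[0,\pi]$ is a complete metric space the Baire Category Theorem makes it dense. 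This gives that the set (\ref{setprincipal}) is dense $\textit{G}_{\delta}$ in $[0,\pi]$.

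I expect the only real friction to be the bookkeeping around the two exceptional parameters. The endpoint $\theta=\pi$ has to be treated as a single extra point, since it lies outside the parametrization $[0,\pi)$ of the extensions $T_{\theta}$, and $\theta=\theta_{0}$ must be separated off because its membership in $B$ is governed by whether $T_{\theta_{0}}\upharpoonright_{\M}$ itself has an eigenvalue rather than by an intersection with $Y$. Indeed, by Theorem \ref{teo1.1} distinct extensions have disjoint point spectra, so $\theta_{0}\notin P$, confirming that these two points genuinely sit outside $P$. As each is a single point and hence nowhere dense in $[0,\pi]$, they do not disturb the category argument, and the whole substance of the theorem is carried by Corollary \ref{prop1} and Proposition \ref{prop2}.
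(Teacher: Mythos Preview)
Your proof is correct and follows essentially the same route as the paper: both arguments feed the set $Y$ from Corollary \ref{prop1} into Proposition \ref{prop2} to obtain that the ``bad'' parameter set is first category, then adjust for the exceptional parameter $\theta_{0}$ (you also single out the endpoint $\pi$, which the paper leaves implicit via the periodicity $T_{\pi}=T_{0}$). The only cosmetic difference is that the paper argues on the side of the target set $N$ and removes $\theta_{0}$ in one case, whereas you argue on the side of the complement $B$ and adjoin $\{\theta_{0},\pi\}$; the logical content is the same.
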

\begin{proof}
Consider the set
\begin{equation*}
Y:=\sigma(T_{\theta_{0}}\upharpoonright_{\M})\cap\bigcup_{\theta\in\left[ 0,\pi\right)\setminus\lbrace \theta_{0}\rbrace} \sigma_{p}(T_{\theta}\upharpoonright_{\M}).
\end{equation*}
By Proposition \ref{prop2} and Corollary \ref{prop1} one has that
\begin{equation*}
\left\lbrace \theta\in\left[ 0,\pi\right]\mid\sigma_{p}(T_{\theta}\upharpoonright_{\M})\cap Y\neq\varnothing\right\rbrace
\end{equation*}
is a countable union of closed nowhere dense subsets in $\left[ 0,\pi\right]$. Thus,
\begin{equation*}
N:=\left\lbrace \theta\in\left[ 0,\pi\right]\mid\sigma_{p}(T_{\theta}\upharpoonright_{\M})\cap Y=\varnothing\right\rbrace
\end{equation*}
is dense $\textit{G}_{\delta}$ in $\left[ 0,\pi\right]$. Furthermore

\begin{equation*}\sigma_{p}(T_{\theta}\upharpoonright_{\M})\bigcap Y=\left\{ \begin{array}{lcc}
                				\sigma_{p}(T_{\theta}\upharpoonright_{\M})\bigcap\sigma(T_{\theta_{0}}\upharpoonright_{\M})	&   if  & \theta\neq\theta_{0} \\
       
             					\\ \varnothing &  if  &\theta =\theta_{0}
             			\end{array}
             			 \right.\end{equation*}
Therefore, $\theta_{0}\in N$. However, 
\begin{center}
$\theta_{0}$ belongs to the set (\ref{setprincipal}) if and only if
$\sigma_{p}(T_{\theta_{0}}\upharpoonright_{\M})=\varnothing$.
\end{center}
 Then

\begin{equation*}
N =\left\lbrace \theta\in\left[ 0,\pi\right]\mid\sigma_{p}(T_{\theta}\upharpoonright_{\M})\cap\sigma(T_{\theta_{0}}\upharpoonright_{\M})=\varnothing\right\rbrace\cup\left\lbrace \theta_{0}\right\rbrace.
\end{equation*}
\begin{itemize}
\item Case 1. If $\sigma_{p}(T_{\theta_{0}}\upharpoonright_{\M})=\varnothing$, then $N$ is equal to the set
(\ref{setprincipal}).
\item Case 2. Suppose $\sigma_{p}(T_{\theta_{0}}\upharpoonright_{\M})\neq\varnothing$. Thus,
\begin{equation*}
N\setminus\left\lbrace \theta_{0}\right\rbrace =\left\lbrace \theta\in\left[ 0,\pi\right]\mid\sigma_{p}(T_{\theta}\upharpoonright_{\M})\cap\sigma(T_{\theta_{0}}\upharpoonright_{\M})=\varnothing\right\rbrace.
\end{equation*}
Since $N$ is dense in $\left[ 0,\pi\right]$, $N\setminus\left\lbrace \theta_{0}\right\rbrace$ is too.
Moreover, if $\left\lbrace F_{n}\right\rbrace _{n\in\N} $ is the sequence of open subsets in $\left[ 0,\pi\right]$ such that $N=\bigcap_{n\in\N}F_{n}$, then
$N\setminus\left\lbrace \theta_{0}\right\rbrace=\bigcap_{n\in\N}\left( F_{n}\setminus\left\lbrace \theta_{0}\right\rbrace\right) $ and each $F_{n}\setminus\left\lbrace \theta_{0}\right\rbrace$ is open in $\left[ 0,\pi\right]$.
\end{itemize}
In conclusion,
\begin{equation*}
\left\lbrace \theta\in\left[ 0,\pi\right]\mid\sigma_{p}(T_{\theta}\upharpoonright_{\M})\cap\sigma(T_{\theta_{0}}\upharpoonright_{\M})=\varnothing\right\rbrace
\end{equation*}
is dense $\textit{G}_{\delta}$ in $\left[ 0,\pi\right]$.
\end{proof}

%Due to the invariance of both  and essential spectrum we conclude the following results.
Denote by $\sigma _{ac}$ and $\sigma _{sc}$ the absolutely continuous and singular continuous spectrum respectively. We mean by $int$ to interior of a set. Remind that $\sigma _{p}$ denotes the set of eigenvalues. We get the following corollaries. 
%\footnote{Denote by $\sigma _{ac}(A)$ and $\sigma _{sc}(A)$ the absolutely continuous and singular continuous spectrum respectively of an operator $A=A^{*}$. } .

\begin{corollary}\label{cor4.1}
Let $\theta_{0}$ fixed and suppose $\sigma_{ac}(T_{\theta_{0}}\upharpoonright_{\M})=\varnothing$.
Then 
\begin{equation*}
\left\lbrace \theta\in\left[ 0,\pi\right]\mid\sigma(T_{\theta}\upharpoonright_{\M})\cap int\;\sigma(T_{\theta_{0}}\upharpoonright_{\M})\subseteq  \sigma_{sc}(T_{\theta}\upharpoonright_{\M}) \right\rbrace 
\end{equation*}
is dense $\textit{G}_{\delta}$ in $\left[ 0,\pi\right]$.
\end{corollary}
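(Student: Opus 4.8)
The plan is to combine Theorem \ref{TEO3} with the invariance of the absolutely continuous spectrum across the family $\{T_\theta\upharpoonright_\M\}_{\theta\in[0,\pi)}$. Writing $S$ for the set in the statement and
\[
G:=\left\lbrace \theta\in[0,\pi]\mid \sigma_p(T_\theta\upharpoonright_\M)\cap\sigma(T_{\theta_0}\upharpoonright_\M)=\varnothing\right\rbrace,
\]
which is dense $\textit{G}_\delta$ in $[0,\pi]$ by Theorem \ref{TEO3}, I would reduce the statement to the single inclusion $G\subseteq S$: this exhibits the required dense $\textit{G}_\delta$ subset inside $S$, matching the way the main theorems are stated.

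First I would record that $\sigma_{ac}(T_\theta\upharpoonright_\M)=\varnothing$ for every $\theta$. As established in the proof of Theorem \ref{teo1.1}, each $T_\theta\upharpoonright_\M$ is a self-adjoint extension of $B\upharpoonright_\M$, which has deficiency indices $(1,1)$ on $\M$; hence the resolvents of any two members of the family differ by a rank one operator, and by the Kato--Rosenblum theorem their absolutely continuous parts are unitarily equivalent. Since $\sigma_{ac}(T_{\theta_0}\upharpoonright_\M)=\varnothing$ by hypothesis, this gives $\sigma_{ac}(T_\theta\upharpoonright_\M)=\varnothing$ for all $\theta$, so that $\sigma(T_\theta\upharpoonright_\M)=\sigma_{sc}(T_\theta\upharpoonright_\M)\cup\sigma_{pp}(T_\theta\upharpoonright_\M)$ with $\sigma_{pp}=\overline{\sigma_p}$ (the three parts being the closed supports of the pieces in the Lebesgue decomposition of the simple spectral measure).

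Next, for fixed $\theta\in G$ I would take $x\in\sigma(T_\theta\upharpoonright_\M)\cap int\,\sigma(T_{\theta_0}\upharpoonright_\M)$ and argue as follows. Because $int\,\sigma(T_{\theta_0}\upharpoonright_\M)$ is open and, as $\theta\in G$, contains no eigenvalue of $T_\theta\upharpoonright_\M$, the point $x$ admits an open neighbourhood free of eigenvalues of $T_\theta\upharpoonright_\M$, whence $x\notin\overline{\sigma_p(T_\theta\upharpoonright_\M)}=\sigma_{pp}(T_\theta\upharpoonright_\M)$. Together with $\sigma_{ac}(T_\theta\upharpoonright_\M)=\varnothing$ and $x\in\sigma(T_\theta\upharpoonright_\M)$, the decomposition above forces $x\in\sigma_{sc}(T_\theta\upharpoonright_\M)$; hence $\theta\in S$ and $G\subseteq S$.

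I expect the main obstacle to be the a.c.\ invariance, since it is the only ingredient not contained in the excerpt and it is precisely what converts the hypothesis at the single parameter $\theta_0$ into information at every $\theta$. The other delicate point, worth isolating explicitly, is the reason for passing to $int\,\sigma(T_{\theta_0}\upharpoonright_\M)$: at a boundary point Theorem \ref{TEO3} gives no control over eigenvalues of $T_\theta\upharpoonright_\M$ accumulating from outside $\sigma(T_{\theta_0}\upharpoonright_\M)$, so the upgrade from $x\notin\sigma_p$ to $x\notin\overline{\sigma_p}$ would break down there and the inclusion into $\sigma_{sc}$ could fail.
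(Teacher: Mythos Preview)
Your proposal is correct and follows essentially the same approach as the paper, whose proof consists of the single sentence ``Follows by the invariance of absolutely continuous spectrum for self-adjoint extensions and Theorem \ref{TEO3}.'' You simply unpack these two ingredients in detail: the Kato--Rosenblum argument is exactly the a.c.\ invariance the paper invokes, and your inclusion $G\subseteq S$ is the intended use of Theorem \ref{TEO3}; your additional remarks on why the interior is needed are accurate and make explicit what the paper leaves implicit.
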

\begin{proof}
Follows by the invariance of absolutely continuous spectrum for self-adjoint extensions and Theorem \ref{TEO3}.
\end{proof}
%------------------------------------
\begin{corollary}\label{discreteeigen}
For a dense $\textit{G}_{\delta}$ set of self-adjoint extensions of a densely defined closed simetric operator with
deficiency indices $(1,1)$, their eigenvalues are isolated.
\end{corollary}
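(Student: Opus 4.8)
The plan is to reduce the statement to Theorem \ref{TEO3} by exploiting the invariance of the essential spectrum. For a self-adjoint operator an eigenvalue is isolated precisely when it does not belong to the essential spectrum: the discrete spectrum consists exactly of the isolated eigenvalues of finite multiplicity, and everything else sits inside $\sigma_{ess}$. Hence the assertion ``all eigenvalues of $T_{\theta}\upharpoonright_{\M}$ are isolated'' is equivalent to ``$\sigma_{p}(T_{\theta}\upharpoonright_{\M})\cap\sigma_{ess}(T_{\theta}\upharpoonright_{\M})=\varnothing$'', i.e. the absence of embedded eigenvalues. I will work on the cyclicity space $\M$, where as in the proof of Theorem \ref{teo1.1} the operator $B\upharpoonright_{\M}$ is densely defined, closed, symmetric with deficiency indices $(1,1)$, and its self-adjoint extensions are exactly the $T_{\theta}\upharpoonright_{\M}$.

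First I would record that the essential spectrum is common to all these extensions. Any two self-adjoint extensions of a symmetric operator with deficiency indices $(1,1)$ have resolvents differing by a rank-one, hence compact, operator, so by the invariance of the essential spectrum under compact perturbations of the resolvent one obtains, for every $\theta$ and every fixed $\theta_{0}$,
\begin{equation*}
\sigma_{ess}(T_{\theta}\upharpoonright_{\M})=\sigma_{ess}(T_{\theta_{0}}\upharpoonright_{\M})\subseteq\sigma(T_{\theta_{0}}\upharpoonright_{\M}).
\end{equation*}

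Next I would fix $\theta_{0}$ and invoke Theorem \ref{TEO3}, which produces a dense $\textit{G}_{\delta}$ set
\begin{equation*}
N:=\left\lbrace\theta\in\left[0,\pi\right]\mid\sigma_{p}(T_{\theta}\upharpoonright_{\M})\cap\sigma(T_{\theta_{0}}\upharpoonright_{\M})=\varnothing\right\rbrace
\end{equation*}
in $\left[0,\pi\right]$. For $\theta\in N$ the inclusion above gives
\begin{equation*}
\sigma_{p}(T_{\theta}\upharpoonright_{\M})\cap\sigma_{ess}(T_{\theta}\upharpoonright_{\M})\subseteq\sigma_{p}(T_{\theta}\upharpoonright_{\M})\cap\sigma(T_{\theta_{0}}\upharpoonright_{\M})=\varnothing,
\end{equation*}
so no eigenvalue of $T_{\theta}\upharpoonright_{\M}$ lies in its essential spectrum; that is, every eigenvalue is isolated. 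Thus $N$ is the desired dense $\textit{G}_{\delta}$ set of extensions.

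I do not expect a serious obstacle: the argument is a bookkeeping combination of Theorem \ref{TEO3} with the standard perturbation-theoretic fact that the essential spectrum is shared by all self-adjoint extensions (in the same spirit as the use of $\sigma_{ac}$-invariance in Corollary \ref{cor4.1}). The only points deserving care are the equivalence between ``isolated eigenvalue'' and ``eigenvalue outside $\sigma_{ess}$'', and the observation that the claimed dense $\textit{G}_{\delta}$ set may be taken to be exactly $N$, rather than the possibly larger and a priori not $\textit{G}_{\delta}$ set of all $\theta$ whose eigenvalues happen to be isolated.
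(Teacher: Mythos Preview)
Your argument is correct and is exactly the route the paper takes: its entire proof reads ``We make use of the invariance of essential spectrum for self-adjoint extensions and Theorem \ref{TEO3}.'' You have simply spelled out that one line in detail, including the identification of ``isolated eigenvalue'' with ``eigenvalue outside $\sigma_{ess}$'' (valid on $\M$ since the spectrum there is simple) and the rank-one resolvent difference giving the common essential spectrum.
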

\begin{proof}
We make use of the invariance of essential spectrum for self-adjoint extensions and Theorem \ref{TEO3}.
\end{proof}

We conclude the following proposition.

\begin{prop}
Let $Y$ be a countable union of closed nowhere dense subsets in $\left[ 0,\pi\right]$. Then
\begin{equation*}
\sigma(T_{\theta_{0}}\upharpoonright_{\M})\cap\bigcup_{\theta\in Y}\sigma_{p}(T_{\theta}\upharpoonright_{\M})
\end{equation*}
is a countable union of closed nowhere dense subsets in $\sigma(T_{\theta_{0}}\upharpoonright_{\M})$.
\end{prop}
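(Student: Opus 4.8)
The plan is to mirror the proof of Proposition \ref{prop2}, interchanging the roles of the energy projection $\P$ and the angle projection $\Pi$, and then to convert nowhere-denseness in $\R$ into nowhere-denseness in $\sigma(T_{\theta_{0}}\upharpoonright_{\M})$ by a Baire category argument. Throughout I assume, as is tacitly needed, that $Y$ does not contain $\theta_{0}$; otherwise the contribution $\sigma_{p}(T_{\theta_{0}}\upharpoonright_{\M})$ can carry an isolated point of $\sigma(T_{\theta_{0}}\upharpoonright_{\M})$, which is necessarily an eigenvalue, and such a relatively open singleton cannot sit inside any nowhere dense set, so the conclusion would fail.

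First I would rewrite the set as a projection. By Lemma \ref{lem1} every eigenvector may be normalised so that $\langle y,u_{+}\rangle=1$, whence
\begin{equation*}
\bigcup_{\theta\in Y}\sigma_{p}(T_{\theta}\upharpoonright_{\M})=\P\left(Q\cap\Pi^{-1}(Y)\right),
\end{equation*}
with $Q=\left\lbrace(\theta,E,y)\in\tau:y\in Ker(T_{\theta}-EI),\ \langle y,u_{+}\rangle=1\right\rbrace$ the set from the proof of Proposition \ref{prop2}. Writing $Y=\bigcup_{n}Y_{n}$ with each $Y_{n}$ closed and nowhere dense in $[0,\pi]$, and $Q=\bigcup_{M}Q^{(M)}$ with each $Q^{(M)}$ compact in $\tau_{M}$, this yields the countable decomposition
\begin{equation*}
\sigma(T_{\theta_{0}}\upharpoonright_{\M})\cap\bigcup_{\theta\in Y}\sigma_{p}(T_{\theta}\upharpoonright_{\M})=\bigcup_{M,n}\left[\sigma(T_{\theta_{0}}\upharpoonright_{\M})\cap\P\left(Q^{(M)}\cap\Pi^{-1}(Y_{n})\right)\right].
\end{equation*}

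Next I would verify that each piece is closed in $\sigma(T_{\theta_{0}}\upharpoonright_{\M})$. Since $Y_{n}$ is closed and $\Pi$ is continuous, $\Pi^{-1}(Y_{n})$ is closed, so $F_{M,n}:=Q^{(M)}\cap\Pi^{-1}(Y_{n})$ is a closed subset of the compact set $Q^{(M)}$ and hence compact; as $\P$ is continuous, $\P(F_{M,n})$ is compact, thus closed in $\R$, and therefore $\sigma(T_{\theta_{0}}\upharpoonright_{\M})\cap\P(F_{M,n})$ is closed in $\sigma(T_{\theta_{0}}\upharpoonright_{\M})$. For nowhere-denseness the key observation is the inclusion
\begin{equation*}
\P(F_{M,n})\subseteq\bigcup_{\theta\in Y_{n}}\sigma_{p}(T_{\theta}\upharpoonright_{\M})\subseteq\bigcup_{\theta\in[0,\pi)\setminus\lbrace\theta_{0}\rbrace}\sigma_{p}(T_{\theta}\upharpoonright_{\M}),
\end{equation*}
valid because $\theta_{0}\notin Y$, so that each piece lies inside the set of Corollary \ref{prop1}, which is a countable union of closed nowhere dense subsets, hence meager, in $\sigma(T_{\theta_{0}}\upharpoonright_{\M})$.

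Finally I would invoke Baire's theorem. The spectrum $\sigma(T_{\theta_{0}}\upharpoonright_{\M})$ is a closed subset of $\R$, hence a complete metric space and so a Baire space; in such a space every meager set has empty interior, and therefore a closed subset of a meager set is nowhere dense. Applying this to each piece $\sigma(T_{\theta_{0}}\upharpoonright_{\M})\cap\P(F_{M,n})$ — closed and contained in the meager set above — shows it is nowhere dense in $\sigma(T_{\theta_{0}}\upharpoonright_{\M})$, which finishes the argument. The step I expect to be the main obstacle is exactly this last transfer: a set that is nowhere dense in $\R$ need not remain nowhere dense once intersected with $\sigma(T_{\theta_{0}}\upharpoonright_{\M})$ (this already fails whenever the spectrum is itself nowhere dense in $\R$, e.g.\ a Cantor-type set), so one cannot argue purely topologically and must genuinely combine the meagreness supplied by Corollary \ref{prop1} with the Baire property of the spectrum. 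Notably this route does not require Proposition \ref{prop2*}, since closedness of the pieces comes from compactness and their nowhere-denseness from Baire.
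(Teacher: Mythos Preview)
Your argument is correct under the extra hypothesis $\theta_{0}\notin Y$ that you explicitly flag, but it follows a genuinely different route from the paper. The paper's one-line proof literally mirrors Proposition~\ref{prop2}: decompose into compact pieces $F_{M,n}:=Q^{(M)}\cap\Pi^{-1}(Y_{n})$, note that $\Pi(F_{M,n})\subseteq Y_{n}$ is nowhere dense in $[0,\pi]$, and then invoke Proposition~\ref{prop2*} (the ``$\P(F)$ nowhere dense $\Leftrightarrow$ $\Pi(F)$ nowhere dense'' transfer) in the reverse direction to conclude that $\P(F_{M,n})$ is nowhere dense. You bypass Proposition~\ref{prop2*} altogether: once each piece is closed and contained in the meager set supplied by Corollary~\ref{prop1}, Baire's theorem in the complete metric space $\sigma(T_{\theta_{0}}\upharpoonright_{\M})$ forces it to be nowhere dense there.

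What each route buys: your argument is more elementary and, as you correctly emphasise, lands directly in the relative topology of $\sigma(T_{\theta_{0}}\upharpoonright_{\M})$, whereas Proposition~\ref{prop2*} a priori yields nowhere-denseness only in $\R$, and your Cantor-spectrum remark shows that the passage from $\R$ to the spectrum is not automatic. Conversely, the paper's route via Proposition~\ref{prop2*} is the one suited to its intended application in Remark~\ref{equivalencianota}, namely deriving Theorem~\ref{zzz} from Theorem~\ref{TEO3}: since your proof rests on Corollary~\ref{prop1}, which is essentially Theorem~\ref{zzz}, plugging your argument into that remark would make the equivalence circular. Your caveat about the case $\theta_{0}\in Y$ (isolated eigenvalues of $T_{\theta_{0}}\upharpoonright_{\M}$ giving relatively open singletons) is a genuine observation about the statement as written and is not specific to your approach.
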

\begin{proof}
It follows the same line that proof of Proposition \ref{prop2} by changing the roles of $\Pi$ and $\P$ in (\ref{proyecciones}).
\end{proof}

\begin{remark}\label{equivalencianota}
Theorem \ref{zzz} is equivalent to Theorem \ref{TEO3}. The first direction is
deduced by taking in Proposition \ref{prop2} the set (\ref{prop1set}) instead of set $Y$. This is just the proof
Theorem \ref{TEO3}. The converse is
deduced by taking in the previous proposition the complement of set (\ref{setprincipal}) instead of set $Y$.
\end{remark}

%---------------------------------------------

In order to prove the second main theorem we note the following fact.

\begin{lemma}
Let $\theta':=\frac{1}{2}arg\left( -\frac{c-i}{c+i}\right)$  with $c\in\R\setminus\left\lbrace 0\right\rbrace$. The functions
\begin{equation*}
\Psi_{c}:\R\setminus\left\lbrace -\frac{1}{c}\right\rbrace \longrightarrow\left( 0,\pi\right)\setminus\left\lbrace \theta'\right\rbrace\;and\;
\Psi:\R\longrightarrow\left( 0,\pi\right)
\end{equation*}
where
\begin{equation*}
\Psi_{c}(\alpha):=\frac{1}{2}arg\left[ -\frac{1+\alpha(c-i)}{1+\alpha(c+i)}\right]\;and\;
\Psi(\alpha):=\Psi_{c=0}(\alpha)
\end{equation*}
%\frac{1}{2}arg\left( \frac{-1+\alpha i}{1+\alpha i}\right)
are homeomorphisms.
\end{lemma}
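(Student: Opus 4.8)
The plan is to factor each of the two maps as a composition of a M\"obius transformation followed by the ``half-argument'' map $v\mapsto\frac{1}{2}\arg(-v)$, and to read off the homeomorphism property from this factorization. First I would observe, exactly as in the proof of Theorem~\ref{ultimate}, that the numerator $1+\alpha(c-i)$ and the denominator $1+\alpha(c+i)$ of the quantity
\begin{equation*}
v(\alpha)=\frac{1+\alpha(c-i)}{1+\alpha(c+i)}
\end{equation*}
are complex conjugates for $\alpha\in\R$, so that $|v(\alpha)|=1$ and $\Psi_{c}(\alpha)=\frac{1}{2}\arg(-v(\alpha))$. Thus $\Psi_{c}=g\circ M$, where $M(\alpha):=v(\alpha)$ and $g(v):=\frac{1}{2}\arg(-v)$.

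Next I would treat the M\"obius factor $M$. The associated matrix $\left(\begin{smallmatrix} c-i & 1 \\ c+i & 1\end{smallmatrix}\right)$ has determinant $-2i\neq 0$, so $M$ extends to a homeomorphism of the Riemann sphere $\hat{\C}$ onto itself; since $|M(\alpha)|=1$ on $\R$ and a M\"obius map carries circles to circles, $M$ restricts to a homeomorphism of $\hat{\R}=\R\cup\{\infty\}$ onto $\S^{1}$. The relevant special values are $M(0)=1$, $M(\infty)=\frac{c-i}{c+i}$, and, when $c\neq 0$, $M(-\tfrac{1}{c})=-1$ (note also $\frac{c-i}{c+i}\neq-1$ precisely because $c\neq0$).

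Then I would analyse the half-argument factor $g$. Writing $v=e^{it}$ with $t\in(-\pi,\pi)$, a continuous parametrization of $\S^{1}\setminus\{-1\}$, one computes $g(e^{it})=\frac{1}{2}(t+\pi)$, which is a homeomorphism of $\S^{1}\setminus\{-1\}$ onto $(0,\pi)$; note $g(1)=\frac{\pi}{2}$ and $g\bigl(\tfrac{c-i}{c+i}\bigr)=\theta'$. Composing: for $c\neq 0$ the point $\alpha=-\tfrac{1}{c}$ is exactly the preimage of the cut $v=-1$ while $\alpha=\infty$ is the preimage of $\theta'$; hence $M$ carries $\R\setminus\{-1/c\}$ homeomorphically onto $\S^{1}\setminus\{-1,\tfrac{c-i}{c+i}\}$, on which $g$ is a homeomorphism onto $(0,\pi)\setminus\{\theta'\}$, giving the claim for $\Psi_{c}$. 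For $c=0$ one has $M(\infty)=-1$, so $M$ already maps $\R$ homeomorphically onto $\S^{1}\setminus\{-1\}$, the exact domain of $g$, and $\Psi=g\circ M:\R\to(0,\pi)$ is a homeomorphism.

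The only delicate point is the alignment of the branch cut of $\arg$: the map $g$ fails to be continuous precisely at $v=-1$, and the whole argument hinges on verifying that this cut is removed automatically---by excluding $\alpha=-1/c$ when $c\neq0$, and by the coincidence $M(\infty)=-1$ when $c=0$---so that both $\Psi_{c}$ and $\Psi$ are continuous with continuous inverse. Everything else reduces to routine bijectivity and continuity bookkeeping for compositions of homeomorphisms.
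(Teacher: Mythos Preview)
Your proof is correct, but it takes a different route from the paper. The paper's argument is a bare verification: it simply writes down an explicit candidate for the inverse,
\[
\Psi_{c}^{-1}(\theta)=-\frac{1+e^{2\theta i}}{c-i+(c+i)e^{2\theta i}},
\]
and then asserts that both $\Psi_{c}$ and $\Psi_{c}^{-1}$ are compositions of continuous functions (and similarly for $c=0$). Your approach instead factors $\Psi_{c}=g\circ M$ through the M\"obius map $M:\hat{\R}\to\S^{1}$ and the half-argument map $g:\S^{1}\setminus\{-1\}\to(0,\pi)$, then tracks the special values $M(0)$, $M(\infty)$, $M(-1/c)$ to see exactly which points must be punctured. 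What this buys you is an explanation of \emph{why} the domains and codomains line up as stated: the exclusion of $-1/c$ is forced by the branch cut of $\arg$ at $v=-1$, and the exclusion of $\theta'$ is forced by the removal of $\alpha=\infty$. The paper's approach is shorter but leaves those checks to the reader; yours is more transparent about the geometry and also makes the connection to the formula~(\ref{tetace}) in Theorem~\ref{ultimate} explicit. Either argument suffices.
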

\begin{proof}
It is enough to note that 
\begin{equation*}
\Psi_{c}^{-1}: \left( 0,\pi\right)\setminus\left\lbrace \theta'\right\rbrace\longrightarrow\R\setminus\left\lbrace -\frac{1}{c}\right\rbrace\;and\;\Psi^{-1}: \left( 0,\pi\right)\longrightarrow\R
\end{equation*}
where
\begin{equation*}
\Psi_{c}^{-1}(\theta):=-\frac{1+e^{2\theta i}}{c-i+(c+i)e^{2\theta i}}\;and\;\Psi^{-1}(\theta):=\Psi_{c=0}^{-1}(\theta)
\end{equation*}
is the inverse function of $\Psi_{c}$ and $\Psi$ respectively. In addition, both $\Psi_{c}$ and $\Psi$ are composition of continuous functions. Analogously for $\Psi_{c}^{-1}$ and $\Psi^{-1}$.
\end{proof}

Finally we can prove the second main theorem.

\begin{proof}[Proof of Theorem \ref{maintheorem2}]
Suppose $c\neq 0$. By Theorem \ref{ultimate},
\begin{equation*}
-\frac{1}{c}\in\Gamma_{c}\Longleftrightarrow 0\in M
\end{equation*}
where $M$ denotes to the set (\ref{setprincipal}) with $\theta_{0}=\frac{\pi}{2}$. If $-\frac{1}{c}\not\in\Gamma_{c}$, then $\Gamma_{c}$ can be mapped by $\Psi_{c}$. Again by Theorem \ref{ultimate} and putting $\theta=\Psi_{c}(\alpha)$,

\begin{align*}
\Psi_{c}(\Gamma_{c})&=\left\lbrace \Psi_{c}(\alpha):\sigma_{p}(A_{\alpha,c}\upharpoonright_{\M})\cap\sigma(A\upharpoonright_{\M})=\varnothing\right\rbrace \\
			  &=\left\lbrace \theta\in\left( 0,\pi\right)\setminus\left\lbrace \theta'\right\rbrace :\sigma_{p}\left( T_{\theta}\upharpoonright_{\M}\right) \cap\sigma\left( T_{\frac{\pi}{2}}\upharpoonright_{\M}\right) =\varnothing\right\rbrace.
\end{align*}
It is obtained that $\Gamma_{c}=\Psi_{c}^{-1}\left( M\setminus\left\lbrace\theta'\right\rbrace\right) $. We realize of the following:
\begin{itemize}
\item[a)] If $\theta'\not\in M$, one concludes by Theorem \ref{TEO3}.
\item[b)] If $\theta'\in M$, the idea of Case 2 in Proof of Theorem \ref{TEO3} is followed for the set
$\Gamma_{c}$.
\end{itemize}
For the case in which $-\frac{1}{c}\in\Gamma_{c}$, we have that
$\Gamma_{c}\setminus\left\lbrace -\frac{1}{c}\right\rbrace =\Psi_{c}^{-1}\left( M\setminus\left\lbrace 0,\theta'\right\rbrace\right) $. Therefore, $\Gamma_{c}$ contains a dense $G_{\delta}$ subset in $\R$.\\

If $c=0$, according to (\ref{tetace}) the corresponding coupling $\alpha$ with $\theta =0=\theta'$ is $\alpha=\infty$ which
is not considered. Therefore $\Gamma=\Psi^{-1}\left( M\setminus\left\lbrace 0\right\rbrace\right)$ and one repeats the procedure in a) and b).
\end{proof}

\begin{remark}
If $\varphi\in\H_{-1}\setminus\H$, then there is a unique set
\begin{equation*}
\left\lbrace x\in\sigma(A\upharpoonright_{\M}):x\not\in\sigma_{p}(A_{\alpha}\upharpoonright_{\M}),\forall\;\alpha\in\R\setminus\left\lbrace 0\right\rbrace \right\rbrace .
\end{equation*}
In the same way there is a unique set
\begin{equation*}
\left\lbrace \alpha\in\R\mid\sigma_{p}(A_{\alpha}\upharpoonright_{\M})\cap\sigma(A\upharpoonright_{\M})=\varnothing\right\rbrace.
\end{equation*}
Even if $\varphi\in\H_{-2}\setminus\H_{-1}$ is homogeneous to $A$ (see \cite[Subsection 1.3.3]{ALBKUR}), the set is unique.
\end{remark}

%------------------------------
We conclude the following corollaries.

\begin{corollary}
Suppose the hypothesis of Theorem \ref{maintheorem1} and $(A+iI)^{-1}\varphi$ is cyclic for $A$. The sets
\begin{equation*}
\left\lbrace x\in\sigma(A):x\not\in\sigma_{p}(A_{\alpha,c}),\forall\;\alpha\in\R\setminus\left\lbrace 0\right\rbrace \right\rbrace 
\end{equation*}
and
\begin{equation*}
\left\lbrace \alpha\in\R\mid\sigma_{p}(A_{\alpha,c})\cap\sigma(A)=\varnothing\right\rbrace
\end{equation*}
contain a dense $\textit{G}_{\delta}$ subset in $\sigma(A)$ and $\R$ respectively.
\end{corollary}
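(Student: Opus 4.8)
The plan is to show that the additional cyclicity hypothesis forces the cyclicity space $\M$ to coincide with all of $\H$, after which the corollary becomes a direct specialization of Theorems \ref{maintheorem1} and \ref{maintheorem2} to the case $\M=\H$.

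First I would compare the two deficiency vectors $g_{i}=(A-iI)^{-1}\varphi$ and $g_{-i}=(A+iI)^{-1}\varphi$, both of which lie in $\H$ by (\ref{opergeneralsen}) with $s=0$. Applying the first resolvent identity to $\varphi\in\H_{-2}$ gives
\begin{equation*}
g_{i}-g_{-i}=2i(A-iI)^{-1}g_{-i},
\end{equation*}
whose right-hand side now involves only the ordinary bounded resolvent acting on the honest vector $g_{-i}\in\H$; interchanging the roles of $i$ and $-i$ yields the symmetric identity $g_{-i}-g_{i}=-2i(A+iI)^{-1}g_{i}$.

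Next, for $\psi\in\H$ I would write $C(\psi):=\overline{span\left\lbrace (A-zI)^{-1}\psi:z\in\C\setminus\R\right\rbrace }$ for the cyclic subspace generated by $\psi$ under $A$, and use that $C(\psi)$ is $A$-reducing and invariant under every resolvent $(A-zI)^{-1}$. The first identity then places $g_{i}$ in $C(g_{-i})$ and the second places $g_{-i}$ in $C(g_{i})$, so that $C(g_{i})=C(g_{-i})$. Since, by the construction of Section 2, $\M$ is the cyclicity space of $u_{+}$ for $T_{\frac{\pi}{2}}=A$ and $u_{+}$ is proportional to $g_{i}$, one has $\M=C(g_{i})$; hence the hypothesis that $g_{-i}=(A+iI)^{-1}\varphi$ is cyclic for $A$, i.e. $C(g_{-i})=\H$, forces $\M=C(g_{i})=C(g_{-i})=\H$.

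Finally, with $\M=\H$ the restrictions to $\M$ are the operators themselves: $A\upharpoonright_{\M}=A$, and since each $A_{\alpha,c}=T_{\theta}$ reduces the (now total) subspace $\M$, also $A_{\alpha,c}\upharpoonright_{\M}=A_{\alpha,c}$; consequently $\sigma(A\upharpoonright_{\M})=\sigma(A)$ and $\sigma_{p}(A_{\alpha,c}\upharpoonright_{\M})=\sigma_{p}(A_{\alpha,c})$. Substituting these equalities into the definitions of $\Xi_{c}$ and $\Gamma_{c}$ turns them into the two sets in the statement, and the claim then follows at once from Theorem \ref{maintheorem1} and Theorem \ref{maintheorem2}. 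I expect the only step carrying genuine content to be the equality $C(g_{i})=C(g_{-i})$, which is needed precisely because the corollary encodes cyclicity through $(A+iI)^{-1}\varphi$ while $\M$ is built from $(A-iI)^{-1}\varphi$; the remaining steps are routine bookkeeping about how the reduced statements specialize once the cyclicity space fills the whole space.
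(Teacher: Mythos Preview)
Your proposal is correct and is exactly the argument the paper intends: the corollary is stated without proof, immediately after Theorems \ref{maintheorem1} and \ref{maintheorem2}, as a direct consequence obtained by observing that the cyclicity hypothesis collapses $\M$ to all of $\H$. Your extra step, using the resolvent identity to reconcile the hypothesis on $(A+iI)^{-1}\varphi=g_{-i}$ with the fact that $\M$ is generated by $(A-iI)^{-1}\varphi=g_{i}$, is a genuine detail the paper leaves implicit, and you handle it cleanly.
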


Denote by $\sigma_{dis}$ the discrete spectrum.

\begin{corollary}
Suppose the hypothesis of Theorem \ref{maintheorem1}. Then 
\begin{equation*}
\Theta_{c}:=\left\lbrace \alpha\in\R\mid\sigma_{p}(A_{\alpha,c}\upharpoonright_{\M})=\sigma_{dis}(A_{\alpha,c}\upharpoonright_{\M})\right\rbrace
\end{equation*}
contains a dense $\textit{G}_{\delta}$ in $\R$. Also, if $\sigma_{ac}(A\upharpoonright_{\M})=\varnothing$
\begin{equation*}
\Delta_{c}:=\left\lbrace \alpha\in\R\mid\sigma(A_{\alpha,c}\upharpoonright_{\M})\cap int\;\sigma(A\upharpoonright_{\M})\subseteq  \sigma_{sc}(A_{\alpha,c}\upharpoonright_{\M}) \right\rbrace 
\end{equation*}
contains a dense $\textit{G}_{\delta}$ in $\R$.
\end{corollary}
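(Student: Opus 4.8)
The plan is to reduce both claims to their $\theta$-parametrized counterparts from Sections 4 and 5 and then transport them to the coupling $\alpha$ through the homeomorphism $\Psi_{c}$, exactly as in the proof of Theorem \ref{maintheorem2}. Throughout I would set $\theta_{0}=\frac{\pi}{2}$, so that $T_{\theta_{0}}\upharpoonright_{\M}=A\upharpoonright_{\M}$ and $A_{\alpha,c}\upharpoonright_{\M}=T_{\Psi_{c}(\alpha)}\upharpoonright_{\M}$ by Theorem \ref{ultimate} and the remark following it.

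First I would deal with $\Theta_{c}$. Since $u_{+}$ is cyclic for $T_{\theta}\upharpoonright_{\M}$, these operators have simple spectrum, so every eigenvalue has multiplicity one and $\sigma_{dis}(T_{\theta}\upharpoonright_{\M})$ is exactly the set of isolated eigenvalues; hence $\sigma_{p}(T_{\theta}\upharpoonright_{\M})=\sigma_{dis}(T_{\theta}\upharpoonright_{\M})$ holds precisely when no eigenvalue lies in the essential spectrum. By invariance of the essential spectrum under self-adjoint extensions one has $\sigma_{ess}(T_{\theta}\upharpoonright_{\M})=\sigma_{ess}(A\upharpoonright_{\M})\subseteq\sigma(A\upharpoonright_{\M})$ for every $\theta$, so any $\theta$ in the dense $\textit{G}_{\delta}$ set (\ref{setprincipal}) supplied by Theorem \ref{TEO3} satisfies $\sigma_{p}(T_{\theta}\upharpoonright_{\M})\cap\sigma_{ess}(T_{\theta}\upharpoonright_{\M})=\varnothing$, i.e. $\sigma_{p}(T_{\theta}\upharpoonright_{\M})=\sigma_{dis}(T_{\theta}\upharpoonright_{\M})$. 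This is the content of Corollary \ref{discreteeigen}, and it shows that the $\theta$-set
\begin{equation*}
\widetilde{\Theta}:=\left\lbrace \theta\in[0,\pi]\mid\sigma_{p}(T_{\theta}\upharpoonright_{\M})=\sigma_{dis}(T_{\theta}\upharpoonright_{\M})\right\rbrace
\end{equation*}
contains a dense $\textit{G}_{\delta}$ subset of $[0,\pi]$. For $\Delta_{c}$ I would simply invoke Corollary \ref{cor4.1} with $\theta_{0}=\frac{\pi}{2}$: the hypothesis $\sigma_{ac}(A\upharpoonright_{\M})=\varnothing$ is exactly $\sigma_{ac}(T_{\theta_{0}}\upharpoonright_{\M})=\varnothing$, whence the $\theta$-set
\begin{equation*}
\widetilde{\Delta}:=\left\lbrace \theta\in[0,\pi]\mid\sigma(T_{\theta}\upharpoonright_{\M})\cap int\;\sigma(A\upharpoonright_{\M})\subseteq\sigma_{sc}(T_{\theta}\upharpoonright_{\M})\right\rbrace
\end{equation*}
is dense $\textit{G}_{\delta}$ in $[0,\pi]$.

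Finally I would pull both $\theta$-sets back to $\R$ through $\Psi_{c}$ (or $\Psi$ when $c=0$). Because $\Psi_{c}$ is a homeomorphism of $\R\setminus\{-\tfrac{1}{c}\}$ onto $(0,\pi)\setminus\{\theta'\}$, intersecting $\widetilde{\Theta}$ (resp.\ $\widetilde{\Delta}$) with the open dense set $(0,\pi)\setminus\{\theta'\}$ and applying $\Psi_{c}^{-1}$ produces a dense $\textit{G}_{\delta}$ subset of $\R\setminus\{-\tfrac{1}{c}\}$, hence of $\R$, which is contained in $\Theta_{c}$ (resp.\ $\Delta_{c}$) by Theorem \ref{ultimate}. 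The only point requiring care is the bookkeeping of the finitely many excluded parameters $-\tfrac{1}{c}$ and $\theta'$ (and, when $c=0$, of $\theta=0$): removing a single point from a dense $\textit{G}_{\delta}$ of $[0,\pi]$ or $\R$ again leaves a dense $\textit{G}_{\delta}$, since neither space has isolated points. This is precisely the ``Case 2'' argument already carried out in the proof of Theorem \ref{TEO3} and reused in the proof of Theorem \ref{maintheorem2}, which I would repeat verbatim. I expect this last bookkeeping to be the only genuine, if minor, obstacle, the substantive work being entirely absorbed into Theorem \ref{TEO3} and Corollary \ref{cor4.1}.
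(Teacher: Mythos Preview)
Your proposal is correct and is exactly the approach the paper has in mind: the paper's own proof reads in full ``The proof follows similar lines to Corollary \ref{cor4.1} and \ref{discreteeigen},'' and what you have written is precisely the expansion of that sentence, namely establishing the $\theta$-versions via invariance of the essential (resp.\ absolutely continuous) spectrum together with Theorem \ref{TEO3}, and then transporting to the coupling $\alpha$ through the homeomorphism $\Psi_{c}$ as in the proof of Theorem \ref{maintheorem2}. Your handling of the excluded points $-\tfrac{1}{c}$ and $\theta'$ is the right bookkeeping and matches the ``Case 2'' argument already in the paper.
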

\begin{proof}
The proof follows similar lines to Corollary \ref{cor4.1} and \ref{discreteeigen}.
\end{proof}

\begin{corollary}
If the hypothesis of Theorem \ref{maintheorem1} holds and $\varphi\in\H_{-2}\setminus\H_{-1}$, then 
\begin{equation}\label{thetaset}
\left\lbrace \left( \alpha,c\right) \in\R^{2}\mid\sigma_{p}(A_{\alpha,c}\upharpoonright_{\M})=
\sigma_{dis}(A_{\alpha,c}\upharpoonright_{\M})\right\rbrace 
\end{equation}
%\left\lbrace \left( \alpha,c\right) \in\R^{2}\mid\sigma_{p}(A_{\alpha,c}\upharpoonright_{\M})%\cap\sigma(A\upharpoonright_{\M})=\varnothing \right\rbrace
is a dense set in $\R^{2}$. In addition if $\sigma_{ac}(A\upharpoonright_{\M})=\varnothing$, 
\begin{equation}\label{deltaset}
\left\lbrace \left( \alpha,c\right) \in\R^{2}\mid\sigma(A_{\alpha,c}\upharpoonright_{\M})\cap int\;\sigma(A\upharpoonright_{\M})\subseteq  \sigma_{sc}(A_{\alpha,c}\upharpoonright_{\M}) \right\rbrace 
\end{equation}
is a dense set in $\R^{2}$
\end{corollary}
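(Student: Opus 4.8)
The plan is to reduce the two-dimensional density statements to the one-parameter results already established in the previous corollary by slicing $\R^{2}$ along the $c$-direction. First I would observe that the set in (\ref{thetaset}) is precisely the union of horizontal fibers
\begin{equation*}
\bigcup_{c\in\R}\left( \Theta_{c}\times\left\lbrace c\right\rbrace\right),
\end{equation*}
and likewise the set in (\ref{deltaset}) equals $\bigcup_{c\in\R}\left( \Delta_{c}\times\left\lbrace c\right\rbrace\right)$, where $\Theta_{c}$ and $\Delta_{c}$ are exactly the sets appearing in the previous corollary. In this way the whole problem reduces to passing from density of each fiber inside its horizontal line to density of the union in the plane.

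Next I would invoke the previous corollary directly: for every fixed $c\in\R$ the set $\Theta_{c}$ contains a dense $\textit{G}_{\delta}$ subset of $\R$, hence in particular $\Theta_{c}$ is dense in $\R$; under the additional hypothesis $\sigma_{ac}(A\upharpoonright_{\M})=\varnothing$ the same conclusion holds for $\Delta_{c}$. Only this plain density, and not the finer $\textit{G}_{\delta}$ property, will be needed in what follows.

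To conclude density in $\R^{2}$, I would take an arbitrary nonempty open set of $\R^{2}$, shrink it to a basic open box $I\times J$ with $I,J\subseteq\R$ open intervals, fix any $c\in J$, and use the density of $\Theta_{c}$ in $\R$ to produce some $\alpha\in\Theta_{c}\cap I$. The point $(\alpha,c)$ then lies simultaneously in $I\times J$ and in the set (\ref{thetaset}), which establishes density. The identical argument, with $\Delta_{c}$ in place of $\Theta_{c}$ and invoking the $\sigma_{ac}(A\upharpoonright_{\M})=\varnothing$ part of the previous corollary, settles (\ref{deltaset}).

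I do not anticipate any genuine obstacle here: the argument is a routine fiber-wise density transfer from the established one-parameter statements. The one point worth flagging is why the conclusion is only \emph{density} rather than a dense $\textit{G}_{\delta}$ in $\R^{2}$. Each individual fiber is dense $\textit{G}_{\delta}$ in its line, but a union of fibers that are $\textit{G}_{\delta}$ in each horizontal slice need not be $\textit{G}_{\delta}$ in the product, since the open sets realizing the $\textit{G}_{\delta}$ structure carry no control that is uniform in the parameter $c$. This is precisely the reason the statement is phrased with density alone, and it is the only subtlety I would be careful to record.
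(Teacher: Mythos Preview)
Your proposal is correct and follows exactly the paper's approach: the paper's proof consists of the single observation that the sets in question equal $\bigcup_{c\in\R}(\Theta_{c}\times\{c\})$ and $\bigcup_{c\in\R}(\Delta_{c}\times\{c\})$ respectively, leaving the routine fiber-wise density transfer implicit. You have simply spelled out that implicit step in full, including the useful remark on why only density (not $G_{\delta}$) survives in the product.
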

\begin{proof}
It suffices to see that (\ref{thetaset}) and (\ref{deltaset}) are equal to $\cup_{c\in\R}\left( \Theta_{c}\times\left\lbrace c\right\rbrace \right) $ and $\cup_{c\in\R}\left( \Delta_{c}\times\left\lbrace c\right\rbrace \right) $ respectively.
\end{proof}

%\begin{remark}
%We can conclude by following the same procedure that (\ref{thetaset}) and (\ref{deltaset}) are dense $\textit{G}_{\delta}$ in $\R^{2}$. 
%\end{remark}

\begin{remark}
One concludes that just as in the case of rank one regular perturbations the absence of absolutely continuous spectrum implies the existence of singular continuous spectrum for a subfamily of rank one singular perturbations. Moreover by
Remark \ref{equivalencianota}, Theorem \ref{maintheorem1} is equivalent to Theorem \ref{maintheorem2}.
\end{remark}

\section*{Final Remarks}

%In the approach of \cite{RMS} properties on spectral measures are applied which also is employed in this paper. On theother hand in \cite{Gordon} the action of regular pertubations which is given by the formal expresion $A_{\alpha}$ allows to see how they are related and so to get the main results. In the case of singular perturbations (and therefore self-adjoint extensions) the aforementioned relationship is not implicit. However Aronszajn-Donoghue Theory leads

In the unified approach presented here we used properties of spectral measures following \cite{RMS} and Aronszajn-Donoghue Theory to show that there is a forbidden set of energies for rank one singular perturbations. By adapting the Gordon's methods of \cite{Gordon}, we related this set to the coupling constants for such perturbations. We found out that the existence of a subset in spectrum of an unperturbed operator, which cannot contain
eigenvalues of the perturbations, is equivalent to the existence of a large set of perturbations which do not have
embedded point spectrum. In future works the unified approach presented here will be used to analyze the spectra of singular finite rank and supersingular perturbations.\\

%The scope of this unified approach is to get the main results presentedhere for more families of perturbations for instance rank finite singular perturbations and supersingularperturbations which will be seen in future works.\\

% This was made in the setting of rank one singular perturbations\\
  
%On the other hand the action of singular perturbations does not allow to see the relationship between couplings and eigenvalues like in the case of regular perturbations. However once again Aronszajn-Donoghue Theory for self-adjoint extensions allows to remark that relationship which is key in the present approach.

%allows to analyze spectra for other families of operators

%----------------------------------------------------------------------------------------------------------------
%-----------------------------------------------------------------------------------------------------------------
\noindent\textbf{Acknowledgements}\\
\textit{The first author was supported by CONAHCYT (CVU:918585). The authors thank Luis Silva for helpful
comments.}

\bigskip

\noindent Rafael del R\'io\\ 
delrio@iimas.unam.mx\\
ORCID: \url{https://orcid.org/0000-0002-9842-6952} \bigskip 

\noindent {\small
\noindent Universidad Nacional Aut\'onoma de M\'exico\\
Instituto de Investigaciones en Matem\'aticas Aplicadas y en Sistemas\\
Departamento de F\'isica Matem\'atica\\
Ciudad de M\'exico, C.P. 04510.
}\bigskip

\noindent Mario Ruiz\\
marioruiz@comunidad.unam.mx\\
ORCID: \url{https://orcid.org/0009-0009-0837-2996} \bigskip

\noindent {\small
\noindent Universidad Nacional Aut\'onoma de M\'exico\\
Instituto de Investigaciones en Matem\'aticas Aplicadas y en Sistemas\\
Departamento de F\'isica Matem\'atica\\
Ciudad de M\'exico, C.P. 04510.
}\bigskip

\end{document}